%------------------------------------------------------------
\documentclass[12pt,reqno]{amsart}
%------------------------------------------------------------
\usepackage{etoolbox}
   %---------- To move the authors addresses to the footnote ----------

   \makeatletter

 \patchcmd{\@setaddresses}{\scshape\ignorespaces}{\ignorespaces}{}{} % addresses in lowercase

\appto\maketitle{%
\let\@makefnmark\relax  \let\@thefnmark\relax
\ifx\@empty\addresses\else\@footnotetext{%
  \vskip-\bigskipamount\@setaddresses}
  }
\def\enddoc@text{}
\makeatother

%-----------------------end -------------------
\makeatletter
\patchcmd\maketitle
  {\uppercasenonmath\shorttitle}
  {}
  {}{}
\patchcmd\maketitle
  {\@nx\MakeUppercase{\the\toks@}}
  {\the\toks@}
  {}
  {}{}
%----titre lowercase-----------
\patchcmd\@settitle{\uppercasenonmath\@title}{\Large}{}{}
%\patchcmd\@settitle{\uppercasenonmath\@title}{} {}{}
%-------------------------
\patchcmd\@setauthors
  {\MakeUppercase{\authors}}
  {\authors}
  {}{}
\makeatother
%------------------------------------------------------------
%\usepackage[notref,notcite]{showkeys}
\usepackage{amsmath,amssymb,amsthm}
\usepackage{color}
\usepackage{url}
\usepackage{tikz-cd}
%-----------------				
%------------------------------------------------------------
\usepackage[utf8]{inputenc}
\usepackage[T1]{fontenc}
%------------------------------------------------------------
\textheight 22.5truecm \textwidth 14.5truecm
\setlength{\oddsidemargin}{0.35in}\setlength{\evensidemargin}{0.35in}

\setlength{\topmargin}{-.5cm}
%\usepackage{geometry}
%\geometry{left=2cm,right=2cm,top=2cm,bottom=2cm}
% ------------------------------------------------------------
\newtheorem{theorem}{Theorem}[section]

\newtheorem{corollary}{Corollary}[section]

\newtheorem{lemma}{Lemma}[section]
\newtheorem{remark}{Remark}[section]

\numberwithin{equation}{section}
%%%%%%%%%%%%% hyperref %%%%%%%%%%%%%%%%%%%%
%\usepackage[colorlinks=true,pagebackref=true,pagebackref=true]{hyperref}
\usepackage[colorlinks=true]{hyperref}
\hypersetup{urlcolor=blue, citecolor=red , linkcolor= blue}
%----------
\usepackage[capitalise,noabbrev,nameinlink]{cleveref}      % \labelcref pour les equations
%----------
%%%%%%%%%%%% \begin{document} %%%%%%%%%%%%%%%%%%%%
\begin{document}
%------------------------
\address{$^{[1]}$ University of Sfax, Sfax, Tunisia.}
\email{\url{kais.feki@hotmail.com}}
%------------------------
\subjclass[2010]{Primary 46C05, 47A12, 47A13; Secondary 47B65, 47A12.}

\keywords{Semi-inner product, joint numerical radius, Davis-Wielandt radius, inequality}

\date{\today}
%\date{June 24, 2019}
\author[Kais Feki] {\Large{Kais Feki}$^{1}$}
\title[Inequalities for the $A$-joint numerical radius of two operators and their applications]{Inequalities for the $A$-joint numerical radius of two operators and their applications}

\maketitle
%------------------------
%----------\thispagestyle{empty}

\begin{abstract}
Let $\big(\mathcal{H}, \langle \cdot\mid \cdot\rangle \big)$ be a complex Hilbert space and $A$ be a positive (semidefinite) bounded linear operator on $\mathcal{H}$. The semi-inner product induced by $A$ is given by ${\langle x\mid y\rangle}_A := \langle Ax\mid y\rangle$, $x, y\in\mathcal{H}$ and defines a seminorm ${\|\cdot\|}_A$ on $\mathcal{H}$. This makes $\mathcal{H}$ into a semi-Hilbert space. The $A$-joint numerical radius of two $A$-bounded operators $T$ and $S$ is given by
\begin{align*}
\omega_{A,\text{e}}(T,S) = \sup_{\|x\|_A= 1}\sqrt{\big|{\langle Tx\mid x\rangle}_A\big|^2+\big|{\langle Sx\mid x\rangle}_A\big|^2}.
\end{align*}
In this paper, we aim to prove several bounds involving $\omega_{A,\text{e}}(T,S)$. Moreover, several inequalities related to the $A$-Davis-Wielandt radius of semi-Hilbert space operators is established. Some of the obtained bounds generalize and refine some earlier results of Zamani and Shebrawi [Mediterr. J. Math. 17, 25 (2020)].
\end{abstract}

\section{Introduction and Preliminaries}\label{s1}
Let $\mathcal{B}(\mathcal{H})$ denote the $C^*$-algebra of all bounded linear operators acting on a complex Hilbert space
$\mathcal{H}$ with an inner product $\langle\cdot\mid\cdot\rangle$ and the corresponding norm $\|\cdot\|$. Throughout this paper, by an operator we mean a bounded linear operator. Let $T^*$ denote the adjoint of an operator $T$. Further, the range and the kernel of $T$ are denoted by $\mathcal{R}(T)$ and $\mathcal{N}(T)$, respectively. In addition, the cone of all positive operators on  $\mathcal{H}$ is given by
$$\mathcal{B}(\mathcal{H})^+:=\left\{A\in \mathcal{B}(\mathcal{H})\,;\,\langle Ax\mid x\rangle\geq 0,\;\forall\;x\in \mathcal{H}\;\right\}.$$
Any $A\in \mathcal{B}(\mathcal{H})^+$ induces the following semi-inner product:
$$\langle\cdot\mid\cdot\rangle_{A}:\mathcal{H}\times \mathcal{H}\longrightarrow\mathbb{C},\;(x,y)\longmapsto\langle x\mid y\rangle_{A} :=\langle Ax\mid y\rangle.$$
Observe that the seminorm induced by  $\langle\cdot\mid\cdot\rangle_{A}$ is given by $\|x\|_A=\langle x\mid x\rangle_A^{1/2}$, for every $x\in \mathcal{H}$. This makes $\mathcal{H}$ into a semi-Hilbert space. It is not difficult to verify that $\|\cdot\|_A$ is a norm on $\mathcal{H}$ if and only if $A$ is injective, and that $(\mathcal{H},\|\cdot\|_A)$ is complete if and only if $\mathcal{R}(A)$ is a closed subspace of $\mathcal{H}$. From now on, we suppose that $A\in\mathcal{B}(\mathcal{H})$ is always a positive (nonzero) operator and we denote the $A$-unit sphere of $\mathcal{H}$ by $\mathbb{S}^A(0,1)$, that is,
$$\mathbb{S}^A(0,1):=\{x\in \mathcal{H}\,;\; \|x\|_A=1\}.$$
For $T\in\mathcal{B}(\mathcal{H})$, the $A$-numerical radius and the $A$-Crawford number of $T$ are given by
\begin{align*}
\omega_A(T) = \sup\Big\{\big|{\langle Tx\mid x\rangle}_A\big|\,; \,\,\, x\in\mathbb{S}^A(0,1)\Big\}
\end{align*}
and
\begin{align*}
c_A(T) = \inf \big\{|{\langle Tx\mid x\rangle}_A|\,;\,\,\, x\in\mathbb{S}^A(0,1)\big\},
\end{align*}
respectively (see \cite{saddi,bakfeki01,zamani1} and the references therein). It should be emphasized here that it may happen that $\omega_A(T) = + \infty$ for some $T\in\mathcal{B}(\mathcal{H})$ (see \cite{feki02}).

Let $T \in \mathcal{B}(\mathcal{H})$. An operator $S\in\mathcal{B}(\mathcal{H})$ is called an $A$-adjoint of $T$ if for every $x,y\in \mathcal{H}$, the identity $\langle Tx\mid y\rangle_A=\langle x\mid Sy\rangle_A$ holds (see \cite{acg1}). So, $S$ is an $A$-adjoint of $T$ if and only if $S$ is solution in $\mathcal{B}(\mathcal{H})$ of the equation $AX=T^*A$. This kind of equations can be studied by using Douglas theorem \cite{doug} which says that the operator equation $TX=S$ has a solution $X\in \mathcal{B}(\mathcal{H})$ if and only if $\mathcal{R}(S) \subseteq \mathcal{R}(T)$ which in turn equivalent to the existence of a positive number $\lambda$ such that $\|S^*x\|\leq \lambda \|T^*x\|$ for all $x\in \mathcal{H}$. In addition, among its many solutions it has only one, denoted by $Q$, which satisfies $\mathcal{R}(Q) \subseteq \overline{\mathcal{R}(T^{*})}$. Such $Q$ is said the reduced solution of the equation $TX=S$. Obviously, the existence of an $A$-adjoint operator is not guaranteed. The subspace of all operators admitting $A$-adjoints is denoted by $\mathcal{B}_{A}(\mathcal{H})$. By Douglas theorem, it holds that
$$\mathcal{B}_{A}(\mathcal{H})=\left\{T\in \mathcal{B}(\mathcal{H})\,;\;\mathcal{R}(T^{*}A)\subset \mathcal{R}(A)\right\}.$$
Let $T\in \mathcal{B}_A(\mathcal{H})$. The reduced solution of the operator equation $AX=T^*A$ is denoted by $T^{\sharp_A}$. Moreover we have, $T^{\sharp_A}=A^\dag T^*A$. Here $A^\dag$ denotes the Moore-Penrose inverse of $A$ (see \cite{acg2}). From now on, for simplicity we will write $X^{\sharp}$ instead of $X^{\sharp_A}$ for every $X \in \mathcal{B}_A({\mathcal{H}})$. Notice that if $T \in \mathcal{B}_A({\mathcal{H}})$, then $T^{\sharp} \in \mathcal{B}_A({\mathcal{H}})$, $(T^{\sharp})^{\sharp}=P_{\overline{\mathcal{R}(A)}}TP_{\overline{\mathcal{R}(A)}}$ and $((T^{\sharp})^{\sharp})^{\sharp}=T$. Here $P_{\overline{\mathcal{R}(A)}}$ denotes the orthogonal projection onto $\overline {\mathcal{R}(A)}$. Further, if $S\in \mathcal{B}_A(\mathcal{H})$ then $TS \in\mathcal{B}_A({\mathcal{H}})$ and $(TS)^{\sharp}=S^{\sharp}T^{\sharp}.$ For an account of results concerning $T^{\sharp}$, we refer the reader to \cite{acg1,acg2}. Again, an application of Douglas theorem gives
$$\mathcal{B}_{A^{1/2}}(\mathcal{H})=\left\{T \in \mathcal{B}(\mathcal{H})\,;\;\exists \,\lambda > 0\,;\;\|Tx\|_{A} \leq \lambda \|x\|_{A},\;\forall\,x\in \mathcal{H}  \right\}.$$
It $T\in\mathcal{B}_{A^{1/2}}(\mathcal{H})$, then $T$ is called $A$-bounded. Notice that $\mathcal{B}_{A}(\mathcal{H})\subseteq\mathcal{B}_{A^{1/2}}(\mathcal{H})$ (see \cite{acg3,feki01}). The seminorm of an operator $T\in\mathcal{B}_{A^{1/2}}(\mathcal{H})$ is given by
\begin{equation}\label{semii}
\|T\|_A:=\sup_{\substack{x\in \overline{\mathcal{R}(A)},\\ x\not=0}}\frac{\|Tx\|_A}{\|x\|_A}=\sup\big\{{\|Tx\|}_A\,; \,\,x\in \mathbb{S}^A(0,1)\big\}<\infty.
\end{equation}
Notice that the second equality in \eqref{semii} has been proved in \cite{fg}. We mention here that $\|\cdot\|_A$ and $\omega_A(\cdot)$ are equivalent seminorms on $\mathcal{B}_{A^{1/2}}(\mathcal{H})$. More precisely, for every $T\in \mathcal{B}_{A^{1/2}}(\mathcal{H})$, we have
\begin{equation}\label{refine1}
\tfrac{1}{2} \|T\|_A\leq\omega_A(T) \leq \|T\|_A,
\end{equation}
(see \cite{bakfeki01}). Further, it was shown in \cite{bakfeki01} that
\begin{equation}\label{apower}
\omega_A(T^n)\leq [\omega_A(T)]^n,
\end{equation}
for every $T\in\mathcal{B}_{A^{1/2}}(\mathcal{H})$ and all positive integer $n$. Before, we move on it is crucial to recall that for every $T,S\in \mathcal{B}_{A^{1/2}}(\mathcal{H})$ we have
\begin{equation}\label{crucial0}
\|TS\|_A\leq \|T\|_A\cdot\|S\|_A,
\end{equation}
(see \cite{bakfeki01}). Recall that an operator $T\in\mathcal{B}(\mathcal{H})$ is said to be $A$-selfadjoint if $AT$ is selfadjoint. Observe that if $T$ is $A$-selfadjoint, then $T\in\mathcal{B}_A(\mathcal{H})$. It was shown in \cite{feki01} that for every $A$-selfadjoint operator $T$ we have
\begin{equation}\label{aself1}
\|T\|_{A}=\omega_A(T).
\end{equation}
Further, an operator $T$ is called $A$-positive if $AT\geq0$ and we write $T\geq_{A}0$. Obviously, an $A$-positive operator is $A$-selfadjoint since $\mathcal{H}$ is a complex Hilbert space. It can be checked that $T^{\sharp} T\geq_{A}0$ and $TT^{\sharp}\geq_{A}0$. Moreover, for every $T\in\mathcal{B}_A(\mathcal{H})$ we have
\begin{align}\label{diez}
{\|T^{\sharp} T\|}_A = {\|TT^{\sharp}\|}_A = {\|T\|}^2_A = {\|T^{\sharp}\|}^2_A,
\end{align}
(see \cite[Proposition 2.3.]{acg2}). Now, an operator $T\in\mathcal{B}_A(\mathcal{H})$ is called $A$-normal if $TT^{\sharp} = T^{\sharp}T$ (see \cite{bakfeki04}). It is obvious that every selfadjoint operator is normal. However, an $A$-selfadjoint operator is
not necessarily $A$-normal (see \cite[Example 5.1]{bakfeki04}).

The $A$-joint numerical radius of a $d$-tuple of operators $(T_1,\ldots,T_d)\in \mathcal{B}(\mathcal{H})^d:= \mathcal{B}(\mathcal{H})\times \cdots \times \mathcal{B}(\mathcal{H})$ was defined in \cite{bakfeki01} by
\begin{equation*}
\omega_{A,\text{e}}(T_1,\ldots,T_d)=\displaystyle\sup\left\{\left(\displaystyle\sum_{k=1}^d|\langle T_kx\mid x\rangle_A|^2\right)^{\frac12};\;x\in \mathbb{S}^A(0,1)\right\}.
\end{equation*}
Notice that the particular case $d=1$ is the $A$-numerical radius of an operator $T$ which recently attracted the attention of several mathematicians (see, e.g., \cite{bakfeki01,bakfeki04,bfeki,feki01,feki02,zamani2,zamani1,zamani3} and the references therein). Some interesting properties of $A$-joint numerical radius of $A$-bounded operators were given in \cite{bakfeki01}. In particular, it is established that for an operator tuple $(T_1,\ldots,T_d)\in \mathcal{B}_A(\mathcal{H})^d$ we have
\begin{align} \label{1.1}
\frac{1}{2\sqrt{d}}\left\|\sum_{k=1}^{d}T_k^{\sharp}T_k\right\|^\frac12\leq \omega_{A,\text{e}}(T_1,\ldots,T_d)\leq \left\|\sum_{k=1}^{d}T_k^{\sharp}T_k\right\|^\frac12.
\end{align}
By using \eqref{1.1}, the present author proved recently in \cite{feki02} that for every $T\in\mathcal{B}_A(\mathcal{H})$ we have
  \begin{equation}\label{16}
  \frac{1}{16}\|T^{\sharp} T+TT^{\sharp}\|_A\le  \omega_A^2\left(T\right) \le \frac{1}{2}\|T^{\sharp} T+TT^{\sharp}\|_A.
  \end{equation}

Recently, the $A$-Davis-Wielandt radius of an operator $T\in\mathcal{B}(\mathcal{H})$ is defined by K. Feki et al in \cite{fekisidha2019} by
\begin{align*}
d\omega_A(T)
&:=\displaystyle\sup\left\{\sqrt{|\langle Tx\mid x\rangle_A|^2+\|Tx\|_A^4}\,;\;x\in \mathbb{S}^A(0,1)\right\}.
\end{align*}
Notice that it was shown in \cite{fekisidha2019}, that $d\omega_A(T)$ may be equal to $+\infty$ for some $T\in\mathcal{B}(\mathcal{H})$. However, if $T\in\mathcal{B}_{A^{1/2}}(\mathcal{H})$, then we have
$$\max\{\omega_A(T),\|T\|_A^2\}\leq d\omega_A(T) \leq \sqrt{\omega_A(T)^2+\|T\|_A^4}<\infty.$$
Clearly, if $T\in\mathcal{B}_A(\mathcal{H})$, then the $A$-Davis-Wielandt radius can be seen as the $A$-joint numerical radius of the operator tuple $(T,T^\sharp T)$. That is, for $T\in\mathcal{B}(\mathcal{H})$, it holds
\begin{equation}\label{davis}
d\omega_A(T)=\omega_{A,\text{e}}(T,T^\sharp T).
\end{equation}
In this paper we establish several inequalities concerning the $A$-joint numerical radius of two semi-Hilbert space operators. In particular, some related results connecting the $A$-joint numerical radius and the classical $A$-numerical radius are also presented. Moreover, we prove several inequalities involving the $A$-Davis-Wielandt radius and the $A$-numerical radii of $A$-bounded operators. Some of the obtained results cover and extend the work of Drogomir \cite{dra1} and the recent paper of Zamani et al. \cite{zamanicheb2020}.

\section{Results}
In this section, we present our result. In order to establish our first upper bound for the $A$-joint numerical radius of two semi-Hilbert space operators we need the following lemmas.
\begin{lemma}(\cite[Section 2]{acg1})\label{l01}
Let $T\in\mathcal{B}(\mathcal{H})$ be an $A$-selfadjoint operator. Then, $T = T^{\sharp}$ if and only if $T$ is $A$-selfadjoint and $\mathcal{R}(T) \subseteq \overline{\mathcal{R}(A)}$.
\end{lemma}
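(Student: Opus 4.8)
The plan is to recognise that, under the standing hypothesis, the statement is essentially an unravelling of the definition of the reduced solution $T^{\sharp}$, together with the fact (recalled in the text) that an $A$-selfadjoint operator lies in $\mathcal{B}_A(\mathcal{H})$, so that $T^{\sharp}$ is well defined. First I would translate the hypothesis: since $A=A^{*}$, an operator $T$ is $A$-selfadjoint precisely when $AT=T^{*}A$, that is, precisely when $T$ is itself a bounded solution of the operator equation $AX=T^{*}A$. By the discussion preceding the lemma, $T^{\sharp}=A^{\dag}T^{*}A$ is the reduced solution of exactly this equation, characterised as the unique solution $Q$ whose range satisfies $\mathcal{R}(Q)\subseteq\overline{\mathcal{R}(A^{*})}=\overline{\mathcal{R}(A)}$. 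Note that the first conjunct on the right-hand side of the iff is already part of the hypothesis, so what really has to be shown is that, for $A$-selfadjoint $T$, one has $T=T^{\sharp}$ if and only if $\mathcal{R}(T)\subseteq\overline{\mathcal{R}(A)}$.

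For the forward implication I would argue that if $T=T^{\sharp}$, then, because the reduced solution always has its range contained in $\overline{\mathcal{R}(A)}$, we immediately obtain $\mathcal{R}(T)=\mathcal{R}(T^{\sharp})\subseteq\overline{\mathcal{R}(A)}$; since $A$-selfadjointness is assumed, nothing further is required. For the converse, assuming $T$ is $A$-selfadjoint with $\mathcal{R}(T)\subseteq\overline{\mathcal{R}(A)}$, I would note that $T$ is then a solution of $AX=T^{*}A$ whose range lies in $\overline{\mathcal{R}(A)}$; by the uniqueness clause in Douglas' theorem this forces $T$ to coincide with the reduced solution, i.e. $T=T^{\sharp}$.

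The only substantive point, and hence the step I would treat most carefully, is the uniqueness of a solution with prescribed range. I would justify it by observing that if $X_{1},X_{2}$ both solve $AX=T^{*}A$ with ranges contained in $\overline{\mathcal{R}(A)}$, then $A(X_{1}-X_{2})=0$ gives $\mathcal{R}(X_{1}-X_{2})\subseteq\mathcal{N}(A)=\overline{\mathcal{R}(A)}^{\perp}$, where I use $A=A^{*}$ and the standard identity $\overline{\mathcal{R}(A)}=\mathcal{N}(A)^{\perp}$; simultaneously $\mathcal{R}(X_{1}-X_{2})\subseteq\overline{\mathcal{R}(A)}$, and intersecting a subspace with its orthogonal complement yields $\mathcal{R}(X_{1}-X_{2})=\{0\}$, whence $X_{1}=X_{2}$. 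Everything else is routine bookkeeping with the relations $A=A^{*}$ and $\overline{\mathcal{R}(A^{*})}=\overline{\mathcal{R}(A)}$, so I do not anticipate any genuine difficulty beyond keeping the role of the reduced solution's range condition explicit throughout.
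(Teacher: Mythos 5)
Your proposal is correct. The paper gives no proof of this lemma (it is quoted directly from \cite[Section 2]{acg1}), so there is no in-paper argument to compare against; your reasoning—observing that $A$-selfadjointness means $T$ itself solves $AX=T^{*}A$, that $T^{\sharp}$ is by definition the reduced solution of that same equation, and that a solution with range contained in $\overline{\mathcal{R}(A)}$ is unique because $\mathcal{R}(X_{1}-X_{2})\subseteq\overline{\mathcal{R}(A)}\cap\mathcal{N}(A)=\{0\}$—is precisely the standard argument underlying the cited result.
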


\begin{lemma}\label{le.2.7}
For every $a, b, c\in \mathcal{H}$
\begin{align}\label{extencu0}
|\langle a\mid b\rangle_A|^2 + |\langle a\mid c\rangle_A|^2
\leq \|a\|_A^2\sqrt{|\langle b\mid b\rangle_A|^2 + 2|\langle b\mid c\rangle_A|^2
+ |\langle c\mid c\rangle_A|^2}.
\end{align}
\end{lemma}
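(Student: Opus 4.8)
The plan is to reduce everything to two applications of the Cauchy--Schwarz inequality: once for the semi-inner product $\langle\cdot\mid\cdot\rangle_A$ and once for the standard Euclidean inner product on $\mathbb{R}^3$. To start, fix $a,b,c\in\mathcal{H}$ and abbreviate $\alpha:=\langle a\mid b\rangle_A$ and $\beta:=\langle a\mid c\rangle_A$. Exploiting the conjugate-linearity of $\langle\cdot\mid\cdot\rangle_A$ in its second variable, I would first record the identity
\begin{align*}
|\langle a\mid b\rangle_A|^2+|\langle a\mid c\rangle_A|^2
=\overline{\alpha}\,\langle a\mid b\rangle_A+\overline{\beta}\,\langle a\mid c\rangle_A
=\langle a\mid \alpha b+\beta c\rangle_A,
\end{align*}
which packages the left-hand side as a single semi-inner product. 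Applying the Cauchy--Schwarz inequality for $\langle\cdot\mid\cdot\rangle_A$ then yields
\begin{align*}
\big(|\alpha|^2+|\beta|^2\big)^2=\big|\langle a\mid \alpha b+\beta c\rangle_A\big|^2\leq \|a\|_A^2\,\|\alpha b+\beta c\|_A^2 .
\end{align*}

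The next step is to estimate $\|\alpha b+\beta c\|_A^2$. Expanding and bounding the cross term gives
\begin{align*}
\|\alpha b+\beta c\|_A^2
=|\alpha|^2\|b\|_A^2+|\beta|^2\|c\|_A^2+2\,\mathrm{Re}\big(\alpha\overline{\beta}\,\langle b\mid c\rangle_A\big)
\leq |\alpha|^2\|b\|_A^2+|\beta|^2\|c\|_A^2+2|\alpha||\beta|\,|\langle b\mid c\rangle_A|.
\end{align*}
Here is where the second, decisive Cauchy--Schwarz comes in: the target expression $\sqrt{|\langle b\mid b\rangle_A|^2+2|\langle b\mid c\rangle_A|^2+|\langle c\mid c\rangle_A|^2}$ is exactly the Euclidean norm of the vector $\big(\|b\|_A^2,\sqrt{2}\,|\langle b\mid c\rangle_A|,\|c\|_A^2\big)$, since $|\langle b\mid b\rangle_A|^2=\|b\|_A^4$ and $|\langle c\mid c\rangle_A|^2=\|c\|_A^4$. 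Pairing it against $\big(|\alpha|^2,\sqrt{2}\,|\alpha||\beta|,|\beta|^2\big)$, whose Euclidean norm is $\sqrt{|\alpha|^4+2|\alpha|^2|\beta|^2+|\beta|^4}=|\alpha|^2+|\beta|^2$, the Cauchy--Schwarz inequality in $\mathbb{R}^3$ delivers
\begin{align*}
|\alpha|^2\|b\|_A^2+|\beta|^2\|c\|_A^2+2|\alpha||\beta|\,|\langle b\mid c\rangle_A|
\leq \big(|\alpha|^2+|\beta|^2\big)\sqrt{|\langle b\mid b\rangle_A|^2+2|\langle b\mid c\rangle_A|^2+|\langle c\mid c\rangle_A|^2}.
\end{align*}

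Combining the three displays gives $\big(|\alpha|^2+|\beta|^2\big)^2\leq \|a\|_A^2\big(|\alpha|^2+|\beta|^2\big)\sqrt{\cdots}$; if $|\alpha|^2+|\beta|^2=0$ the claimed bound holds trivially, and otherwise I would cancel one factor of $|\alpha|^2+|\beta|^2$ to obtain \eqref{extencu0}. The routine parts are the expansion of $\|\alpha b+\beta c\|_A^2$ and the sign bookkeeping in the conjugate-linear slot, which I expect to cause no trouble. The one genuinely creative step, and hence the main obstacle, is spotting the correct auxiliary vectors for the Euclidean Cauchy--Schwarz: the coefficient $\sqrt{2}$ in front of $|\langle b\mid c\rangle_A|$ and $|\alpha||\beta|$ is precisely what makes the two norms collapse to the desired symmetric square root and to $|\alpha|^2+|\beta|^2$, respectively. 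Equivalently, this amounts to the largest eigenvalue of the Gram matrix $\left(\begin{smallmatrix}\|b\|_A^2 & |\langle b\mid c\rangle_A|\\ |\langle b\mid c\rangle_A| & \|c\|_A^2\end{smallmatrix}\right)$ being dominated by $\sqrt{\|b\|_A^4+2|\langle b\mid c\rangle_A|^2+\|c\|_A^4}$, a fact that ultimately rests on the trivial inequality $\big(\|b\|_A^2\|c\|_A^2-|\langle b\mid c\rangle_A|^2\big)^2\geq 0$.
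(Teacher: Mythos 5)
Your proof is correct, but it follows a genuinely different route from the paper. The paper's proof is essentially a two-step transfer argument: it quotes Dragomir's inequality $|\langle x\mid y\rangle|^2 + |\langle x\mid z\rangle|^2 \leq \|x\|^2\big(|\langle y\mid y\rangle|^2 + 2|\langle y\mid z\rangle|^2 + |\langle z\mid z\rangle|^2\big)^{1/2}$, valid in any inner product space, and then applies it to the vectors $A^{1/2}a$, $A^{1/2}b$, $A^{1/2}c$ via the identity $\langle x\mid y\rangle_A = \langle A^{1/2}x\mid A^{1/2}y\rangle$ — a standard trick in this literature for pushing Hilbert-space inequalities to the semi-Hilbert setting. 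You instead reprove the inequality from scratch, entirely inside the semi-inner product framework: packaging the left-hand side as $\langle a\mid \alpha b+\beta c\rangle_A$ with $\alpha=\langle a\mid b\rangle_A$, $\beta=\langle a\mid c\rangle_A$, applying Cauchy--Schwarz for $\langle\cdot\mid\cdot\rangle_A$ (which is valid for any positive semidefinite sesquilinear form), expanding $\|\alpha b+\beta c\|_A^2$, and then dominating the resulting quadratic expression by a Euclidean Cauchy--Schwarz in $\mathbb{R}^3$ with the well-chosen weight $\sqrt{2}$ on the cross terms; the degenerate case $|\alpha|^2+|\beta|^2=0$ is handled separately before cancelling. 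All steps check out (in particular the conjugate-linearity bookkeeping and the identification $|\langle b\mid b\rangle_A|^2=\|b\|_A^4$ are correct). What your approach buys is self-containedness — no appeal to \cite[p.~148]{D.2} and no use of $A^{1/2}$ — and in effect a proof that Dragomir's inequality holds verbatim in any semi-inner product space; what the paper's approach buys is brevity and uniformity, since the same $A^{1/2}$-transfer device is reused for Lemmas \ref{le.2.88}, \ref{le.2.13} and \ref{le.2.3}.
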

\begin{proof}
Notice first that, by \cite[p. 148]{D.2}, we have
\begin{align}\label{zzkk22}
|\langle x\mid y\rangle|^2 + |\langle x\mid z\rangle|^2
\leq \|x\|^2\Big(|\langle y\mid y\rangle|^2 + 2|\langle y\mid z\rangle|^2
+ |\langle z\mid z\rangle|^2\Big)^{\frac{1}{2}},
\end{align}
for any $x, y, z\in \mathcal{H}$. Now, let $a, b, c\in \mathcal{H}$. It follows, from \eqref{zzkk22}, that
\begin{align*}
&|\langle a\mid b\rangle_A|^2 + |\langle a\mid c\rangle_A|^2\\
& = |\langle A^{1/2}a\mid A^{1/2}b\rangle|^2 + |\langle A^{1/2}a\mid A^{1/2}c\rangle|^2\\
 &\leq \|A^{1/2}a\|^2\sqrt{|\langle A^{1/2}b\mid A^{1/2}b\rangle|^2 + 2|\langle A^{1/2}b\mid A^{1/2}c\rangle|^2
+ |\langle A^{1/2}c\mid A^{1/2}c\rangle|^2}.
\end{align*}
This proves \eqref{extencu0} as desired.
\end{proof}

%%%%%%%%%%%%%%%%%%%%%%%%%%%%%%
Our first result in this paper reads as follows.
\begin{theorem}\label{th.2.9}
Let $T,S\in \mathcal{B}_A(\mathcal{H})$. Then,
\begin{align}
\omega_{A,\text{e}}(T,S) \leq \sqrt{\left\|T\right\|_A^4+\left\|S\right\|_A^4+2\omega_A^2(S^\sharp T)}\leq \left\|T\right\|_A^2+\left\|S\right\|_A^2.
\end{align}
\end{theorem}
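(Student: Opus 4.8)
The plan is to reduce the whole statement to one pointwise estimate coming from Lemma~\ref{le.2.7} and then pass to the supremum over the $A$-unit sphere. Concretely, I would fix $x\in\mathbb{S}^A(0,1)$ and apply \eqref{extencu0} with the choice $a=x$, $b=Tx$, $c=Sx$.

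With this substitution the left-hand side of \eqref{extencu0} is exactly the quantity under the supremum defining $\omega_{A,\text{e}}(T,S)$: since $|\langle x\mid Tx\rangle_A|=|\langle Tx\mid x\rangle_A|$ and likewise for $S$, it equals $|\langle Tx\mid x\rangle_A|^2+|\langle Sx\mid x\rangle_A|^2$. On the right-hand side one has $\langle b\mid b\rangle_A=\|Tx\|_A^2$ and $\langle c\mid c\rangle_A=\|Sx\|_A^2$, while the cross term must be turned into something involving $S^{\sharp}T$. This is where I would use that $S\in\mathcal{B}_A(\mathcal{H})$, which gives the adjoint identity $\langle u\mid Sv\rangle_A=\langle S^{\sharp}u\mid v\rangle_A$ for all $u,v\in\mathcal{H}$; taking $u=Tx$, $v=x$ yields $\langle b\mid c\rangle_A=\langle Tx\mid Sx\rangle_A=\langle S^{\sharp}Tx\mid x\rangle_A$. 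Thus \eqref{extencu0} becomes
\[
|\langle Tx\mid x\rangle_A|^2+|\langle Sx\mid x\rangle_A|^2\le\sqrt{\|Tx\|_A^4+2\,|\langle S^{\sharp}Tx\mid x\rangle_A|^2+\|Sx\|_A^4}.
\]

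Now I would take the supremum over $x\in\mathbb{S}^A(0,1)$. Bounding $\|Tx\|_A\le\|T\|_A$, $\|Sx\|_A\le\|S\|_A$ via \eqref{semii} and $|\langle S^{\sharp}Tx\mid x\rangle_A|\le\omega_A(S^{\sharp}T)$, and using that $t\mapsto\sqrt{t}$ is monotone, the right-hand side is dominated by $\sqrt{\|T\|_A^4+\|S\|_A^4+2\omega_A^2(S^{\sharp}T)}$; this yields the first inequality (read with $\omega_{A,\text{e}}^2(T,S)$ on the left, which is the quantity the pointwise estimate actually controls). For the second inequality I would square both sides: it collapses to the single claim $\omega_A(S^{\sharp}T)\le\|T\|_A\|S\|_A$, which follows from the chain $\omega_A(S^{\sharp}T)\le\|S^{\sharp}T\|_A\le\|S^{\sharp}\|_A\|T\|_A=\|S\|_A\|T\|_A$, using \eqref{refine1}, the submultiplicativity \eqref{crucial0}, and the norm identity \eqref{diez}.

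The argument is essentially bookkeeping once the right substitution into Lemma~\ref{le.2.7} is found, so I expect no serious obstacle; the only point requiring care is the correct routing of the cross term $\langle Tx\mid Sx\rangle_A$ into $\langle S^{\sharp}Tx\mid x\rangle_A$ (so that $\omega_A(S^{\sharp}T)$, rather than some other product, appears) together with the legitimacy of passing the supremum through the monotone square root.
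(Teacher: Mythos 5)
Your proof is correct, and in outline it follows the paper's own argument: the same substitution $a=x$, $b=Tx$, $c=Sx$ into Lemma~\ref{le.2.7}, the same identification $\langle Tx\mid Sx\rangle_A=\langle S^\sharp Tx\mid x\rangle_A$, and the same closing chain $\omega_A(S^\sharp T)\le\|S^\sharp T\|_A\le\|S^\sharp\|_A\|T\|_A=\|S\|_A\|T\|_A$ via \eqref{refine1}, \eqref{crucial0} and \eqref{diez}. You do make one genuine simplification: the paper first dominates $|\langle T^\sharp Tx\mid x\rangle_A|^2+|\langle S^\sharp Sx\mid x\rangle_A|^2$ by $\omega_{A,\text{e}}^2(T^\sharp T,S^\sharp S)$, invokes the right-hand inequality of \eqref{1.1} together with Lemma~\ref{l01} to reach $\|(T^\sharp T)^2+(S^\sharp S)^2\|_A$, and only afterwards relaxes this to $\|T\|_A^4+\|S\|_A^4$ by the triangle inequality and \eqref{crucial0}; you instead bound $\|Tx\|_A\le\|T\|_A$ and $\|Sx\|_A\le\|S\|_A$ pointwise via \eqref{semii} and skip that detour. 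Nothing is lost for the bound stated in the theorem, since the paper discards its sharper intermediate quantity anyway.

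The discrepancy you flagged --- that the pointwise estimate controls $\omega_{A,\text{e}}^2(T,S)$ rather than $\omega_{A,\text{e}}(T,S)$ --- is not a defect of your argument; it exposes an error in the paper. When the paper passes to the supremum it silently replaces $\omega_{A,\text{e}}^2$ by $\omega_{A,\text{e}}$, and the theorem as printed is actually false: its left side is homogeneous of degree $1$ in $(T,S)$ while its right side is homogeneous of degree $2$, so it must fail for operators of small seminorm. Concretely, with $A=I$ and $T=S=\tfrac12 I$ one has $\omega_{A,\text{e}}(T,S)=\tfrac{\sqrt{2}}{2}$, whereas $\sqrt{\|T\|_A^4+\|S\|_A^4+2\omega_A^2(S^\sharp T)}=\|T\|_A^2+\|S\|_A^2=\tfrac12$. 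What your argument (and, read correctly, the paper's) establishes is $\omega_{A,\text{e}}^2(T,S)\le\sqrt{\|T\|_A^4+\|S\|_A^4+2\omega_A^2(S^\sharp T)}\le\|T\|_A^2+\|S\|_A^2$, equivalently $\omega_{A,\text{e}}(T,S)\le\big(\|T\|_A^4+\|S\|_A^4+2\omega_A^2(S^\sharp T)\big)^{1/4}$, which is the scale-consistent form and matches the fourth-root shape of the paper's subsequent bound \eqref{fal9an}. So your ``corrected reading'' is the right statement to prove, and your proof of it is complete.
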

\begin{proof}
Let $x\in \mathbb{S}^A(0,1)$. By choosing in Lemma \ref{le.2.7} $a = x, b = Tx$ and $c = Sx$ we see that
\begin{align}\label{95}
&\left(|\langle Tx\mid x\rangle_A|^2 + |\langle Sx\mid x\rangle_A|^2\right)^2\nonumber\\
& = \left(|\langle x\mid  Tx\rangle_A|^2 + |\langle x\mid  Sx\rangle_A|^2\right)^2\nonumber\\
&\leq \|x\|_A^4\left(|\langle Tx\mid Tx\rangle_A|^2 + 2|\langle Tx\mid Sx\rangle_A|^2 + |\langle Sx\mid Sx\rangle_A|^2\right)\nonumber\\
& = |\langle T^\sharp Tx\mid x\rangle_A|^2+ |\langle S^\sharp Sx\mid x\rangle_A|^2 + 2|\langle S^\sharp Tx\mid
x\rangle_A|^2 \nonumber\\
&\leq \omega_{A,\text{e}}^2(T^\sharp T,S^\sharp S)+2\omega_A^2(S^\sharp T)\nonumber\\
&\leq \left\|(T^\sharp T)^\sharp T^\sharp T+(S^\sharp S)^\sharp S^\sharp S\right\|_A+2\omega_A^2(S^\sharp T),
\end{align}
where the last inequality follows from the second inequality in \eqref{1.1}. Now, since $T^\sharp T$ is $A$-selfadjoint and satisfies $\mathcal{R}(T^\sharp T) \subseteq \overline{\mathcal{R}(A)}$, then by Lemma \ref{l01} we have $(T^\sharp T)^\sharp=T^\sharp T$. Similarly, $(S^\sharp S)^\sharp=S^\sharp S$. So, by \eqref{95}, we have
$$
\left(|\langle Tx\mid x\rangle_A|^2 + |\langle Sx\mid x\rangle_A|^2\right)^2\leq\left\|(T^\sharp T)^2+(S^\sharp S)^2\right\|_A+2\omega_A^2(S^\sharp T).
$$
By taking the supremum over all $x\in \mathbb{S}^A(0,1)$ in the above inequality we get
\begin{equation}
\omega_{A,\text{e}}(T,S) \leq \sqrt{\left\|(T^\sharp T)^2+(S^\sharp S)^2\right\|_A+2\omega_A^2(S^\sharp T)}.
\end{equation}
Moreover, by using the triangle inequality together with \eqref{crucial0} we obtain
\begin{align*}
\omega_{A,\text{e}}(T,S)
&\leq \sqrt{\left\|T^\sharp T\right\|_A^2+\left\|S^\sharp S\right\|_A^2+2\omega_A^2(S^\sharp T)} \\
 &=\sqrt{\left\|T\right\|_A^4+\left\|S\right\|_A^4+2\omega_A^2(S^\sharp T)}\quad (\text{by }\,\eqref{diez})\\
  &\leq\sqrt{\left\|T\right\|_A^4+\left\|S\right\|_A^4+2\|S^\sharp T\|_A^2}\quad (\text{by }\,\eqref{refine1})\\
   &\leq\sqrt{\left\|T\right\|_A^4+\left\|S\right\|_A^4+2\|S^\sharp \|_A^2\|T\|_A^2}\quad (\text{by }\,\eqref{crucial0})\\
   &\sqrt{\left(\left\|T\right\|_A^2+\left\|S\right\|_A^2\right)^2}=\left\|T\right\|_A^2+\left\|S\right\|_A^2.
\end{align*}
This proves the desired result.
\end{proof}
%%%%%%%%%%%%%%%%%%%%%%%%%%%%%%%%%%
In what follows, we need the following lemmas.
\begin{lemma}\label{le.2.8}(\cite[Lemma 2.9.]{zamanicheb2020})
For any $z_1,z_2\in \mathbb{C}$, we have
\begin{align*}
\sup\left\{\Big|\alpha z_1 + \beta z_2\Big|^2;\;(\alpha,\beta)\in \mathbb{C}^2,\;|\alpha|^2 + |\beta|^2 \leq 1 \right\} = |z_1|^2 + |z_2|^2.
\end{align*}
\end{lemma}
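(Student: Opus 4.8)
The plan is to recognize the quantity $\alpha z_1 + \beta z_2$ as a Hermitian inner product in $\mathbb{C}^2$ and reduce the whole statement to the Cauchy--Schwarz inequality together with its equality condition. Writing $u = (\alpha, \beta)$ and $v = (\overline{z_1}, \overline{z_2})$ and equipping $\mathbb{C}^2$ with the standard inner product $\langle u \mid v\rangle = u_1\overline{v_1} + u_2\overline{v_2}$, I would first note the simple identity $\alpha z_1 + \beta z_2 = \langle u \mid v\rangle$, which turns the constrained optimization into a question about the norm of a linear functional.

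For the inequality $\leq$, apply Cauchy--Schwarz to obtain $|\alpha z_1 + \beta z_2| \leq \|u\|\,\|v\| = \sqrt{|\alpha|^2 + |\beta|^2}\,\sqrt{|z_1|^2 + |z_2|^2}$. Since the admissibility constraint forces $|\alpha|^2 + |\beta|^2 \leq 1$, squaring yields $|\alpha z_1 + \beta z_2|^2 \leq |z_1|^2 + |z_2|^2$ for every admissible pair $(\alpha,\beta)$, so the supremum is at most $|z_1|^2 + |z_2|^2$.

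For the reverse inequality, and to confirm that the supremum is genuinely attained, I would distinguish two cases. If $(z_1,z_2) = (0,0)$ both sides vanish and there is nothing to prove. Otherwise set $\alpha = \overline{z_1}/\sqrt{|z_1|^2 + |z_2|^2}$ and $\beta = \overline{z_2}/\sqrt{|z_1|^2 + |z_2|^2}$; then $|\alpha|^2 + |\beta|^2 = 1$, so this pair is admissible, and a direct computation gives $\alpha z_1 + \beta z_2 = \sqrt{|z_1|^2 + |z_2|^2}$, hence $|\alpha z_1 + \beta z_2|^2 = |z_1|^2 + |z_2|^2$. Combining the two bounds delivers the claimed equality.

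There is no real obstacle here; the only point requiring a little care is that the statement asserts an \emph{equality} (the supremum is achieved), which is why I would not stop at the Cauchy--Schwarz estimate but exhibit the explicit maximizer above, handling the degenerate case $z_1 = z_2 = 0$ separately so that the normalizing denominator stays nonzero.
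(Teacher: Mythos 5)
Your proof is correct. Note that the paper offers no proof of this lemma at all---it is quoted directly from \cite[Lemma 2.9]{zamanicheb2020}---so there is no internal argument to compare against; your self-contained argument, viewing $\alpha z_1+\beta z_2$ as the inner product of $(\alpha,\beta)$ with $(\overline{z_1},\overline{z_2})$ in $\mathbb{C}^2$, applying Cauchy--Schwarz for the upper bound, and exhibiting the normalized maximizer $(\alpha,\beta)=\bigl(\overline{z_1},\overline{z_2}\bigr)/\sqrt{|z_1|^2+|z_2|^2}$ (with the degenerate case $z_1=z_2=0$ treated separately) is the standard argument and fully establishes the claimed equality, including attainment of the supremum.
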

%%%%%%%%%%%%%%%%%%%%%%%%%%%%%
\begin{lemma}\label{le.2.88}
Let $T, R \in \mathcal{B}_A(\mathcal{H})$. Then, for every $\alpha, \beta\in \mathbb{C}$, we have
\begin{align*}
\|\alpha T + \beta S\|_A^2 \leq (|\alpha|^2 + |\beta|^2)\|T^\sharp T + S^\sharp S\|_A.
\end{align*}
\end{lemma}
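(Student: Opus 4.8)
The plan is to reduce the operator seminorm to a pointwise estimate on the $A$-unit sphere and then to control the resulting quadratic form by its $A$-numerical radius. First I would use \eqref{semii} to write
$$\|\alpha T + \beta S\|_A^2 = \sup_{x\in\mathbb{S}^A(0,1)}\|\alpha Tx + \beta Sx\|_A^2,$$
so that it suffices to bound $\|\alpha Tx + \beta Sx\|_A^2$ for a fixed $x$ with $\|x\|_A = 1$.

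For such an $x$, the triangle inequality for $\|\cdot\|_A$ followed by the Cauchy--Schwarz inequality applied to the pairs $(|\alpha|,|\beta|)$ and $(\|Tx\|_A,\|Sx\|_A)$ gives
$$\|\alpha Tx + \beta Sx\|_A \leq |\alpha|\,\|Tx\|_A + |\beta|\,\|Sx\|_A \leq \sqrt{|\alpha|^2 + |\beta|^2}\,\sqrt{\|Tx\|_A^2 + \|Sx\|_A^2},$$
whence $\|\alpha Tx + \beta Sx\|_A^2 \leq (|\alpha|^2 + |\beta|^2)\big(\|Tx\|_A^2 + \|Sx\|_A^2\big)$. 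This step disposes of the mixed term cleanly and avoids expanding $\operatorname{Re}\big(\alpha\bar\beta\langle Tx\mid Sx\rangle_A\big)$ by hand, which I expect would otherwise be the fiddliest part of a direct computation.

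The remaining work is to recognize the factor $\|Tx\|_A^2 + \|Sx\|_A^2$ as a single quadratic form. Since $T,S\in\mathcal{B}_A(\mathcal{H})$, the $A$-adjoint identity $\langle Tx\mid y\rangle_A = \langle x\mid T^\sharp y\rangle_A$ yields $\|Tx\|_A^2 = \langle T^\sharp Tx\mid x\rangle_A$, and similarly $\|Sx\|_A^2 = \langle S^\sharp Sx\mid x\rangle_A$, so that
$$\|Tx\|_A^2 + \|Sx\|_A^2 = \big\langle (T^\sharp T + S^\sharp S)x \mid x\big\rangle_A.$$
Now $T^\sharp T + S^\sharp S$ is a sum of $A$-positive operators, hence itself $A$-positive and in particular $A$-selfadjoint; thus $\langle (T^\sharp T + S^\sharp S)x\mid x\rangle_A \geq 0$ and is bounded above by $\omega_A(T^\sharp T + S^\sharp S)$. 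Invoking \eqref{aself1}, namely $\omega_A(M)=\|M\|_A$ for $A$-selfadjoint $M$, I would conclude $\langle (T^\sharp T + S^\sharp S)x\mid x\rangle_A \leq \|T^\sharp T + S^\sharp S\|_A$. Combining the two displayed estimates and taking the supremum over $x\in\mathbb{S}^A(0,1)$ gives the claimed bound. The one point requiring care is the passage $\langle (T^\sharp T + S^\sharp S)x\mid x\rangle_A \le \|T^\sharp T + S^\sharp S\|_A$, which relies on the $A$-positivity (so the form is nonnegative and its supremum is exactly the $A$-numerical radius) together with \eqref{aself1}.
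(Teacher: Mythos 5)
Your proof is correct and follows essentially the same route as the paper's: pointwise bound on the $A$-unit sphere, the identity $\|Tx\|_A^2+\|Sx\|_A^2=\langle (T^\sharp T+S^\sharp S)x\mid x\rangle_A$, then $\omega_A(T^\sharp T+S^\sharp S)=\|T^\sharp T+S^\sharp S\|_A$ via \eqref{aself1} and the supremum via \eqref{semii}. The only cosmetic difference is that you obtain the first estimate by the triangle inequality plus scalar Cauchy--Schwarz, while the paper applies Cauchy--Schwarz to the vectors $A^{1/2}Tx$, $A^{1/2}Sx$; these are the same inequality.
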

\begin{proof}
Let $x\in \mathbb{S}^A(0,1)$. Then, by applying the Cauchy-Schwarz inequality, we see that
\begin{align*}
\|\alpha Tx + \beta Sx\|_A^2
&=\|\alpha A^{1/2}Tx + \beta A^{1/2}Sx\|^2\\
&\leq (|\alpha|^2 + |\beta|^2)(\|A^{1/2}Tx\|^2 + \|A^{1/2}Sx\|^2)\\
&=(|\alpha|^2 + |\beta|^2)(\|Tx\|_A^2 + \|Sx\|_A^2)\\
& = (|\alpha|^2 + |\beta|^2)\big\langle (T^\sharp T + S^\sharp S)x\mid x\big\rangle_A\\
&\leq (|\alpha|^2 + |\beta|^2)\omega_A(T^\sharp T + S^\sharp S)\\
& = (|\alpha|^2 + |\beta|^2)\|T^\sharp T + S^\sharp S\|_A,
\end{align*}
where the last equality follows from \eqref{aself1} since $T^\sharp T + S^\sharp S\geq_A0$. Hence,
$$\|(\alpha T + \beta S)x\|_A^2 \leq (|\alpha|^2 + |\beta|^2)\|T^\sharp T + S^\sharp S\|_A.$$
So, by taking the supremum over all $x\in \mathbb{S}^A(0,1)$ in the above inequality and then using \eqref{semii} we get the desired result.
\end{proof}
%%%%%%%%%%%%%%%%%%%%%%%%%%%%%%
Now, we are in a position to prove the following result.
\begin{theorem}\label{th.2.9}
Let $T,S\in \mathcal{B}_A(\mathcal{H})$. Then,
\begin{align}\label{fal9an}
\omega_{A,\text{e}}(T,S) \leq \left[\omega_A\Big((T^\sharp T)^2 + (S^\sharp S)^2\Big) + 2\omega_A^2\big(S^\sharp T\big)\right]^{\frac{1}{4}}.
\end{align}
\end{theorem}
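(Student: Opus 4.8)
The plan is to open exactly as in the preceding theorem and then replace its closing norm estimate by a sharper one obtained through Lemmas~\ref{le.2.8} and~\ref{le.2.88}. First I would fix $x\in\mathbb{S}^A(0,1)$ and reuse the pointwise inequality already derived from Lemma~\ref{le.2.7} with the choice $a=x$, $b=Tx$, $c=Sx$, namely
\begin{align*}
\left(|\langle Tx\mid x\rangle_A|^2 + |\langle Sx\mid x\rangle_A|^2\right)^2 \leq |\langle T^\sharp Tx\mid x\rangle_A|^2 + |\langle S^\sharp Sx\mid x\rangle_A|^2 + 2|\langle S^\sharp Tx\mid x\rangle_A|^2.
\end{align*}
The last summand is immediately controlled by $2\omega_A^2(S^\sharp T)$, so the whole matter reduces to bounding $|\langle T^\sharp Tx\mid x\rangle_A|^2 + |\langle S^\sharp Sx\mid x\rangle_A|^2$ by $\omega_A\big((T^\sharp T)^2+(S^\sharp S)^2\big)$, uniformly in $x$.

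The key step is to linearize this sum of two squared moduli via Lemma~\ref{le.2.8}: applying it to $z_1=\langle T^\sharp Tx\mid x\rangle_A$ and $z_2=\langle S^\sharp Sx\mid x\rangle_A$ rewrites it as
\begin{align*}
|\langle T^\sharp Tx\mid x\rangle_A|^2 + |\langle S^\sharp Sx\mid x\rangle_A|^2 = \sup_{|\alpha|^2+|\beta|^2\leq 1}\big|\big\langle (\alpha T^\sharp T + \beta S^\sharp S)x\mid x\big\rangle_A\big|^2.
\end{align*}
For each admissible pair $(\alpha,\beta)$, the Cauchy--Schwarz inequality for $\langle\cdot\mid\cdot\rangle_A$, the normalization $\|x\|_A=1$, and the definition \eqref{semii} of the operator seminorm give $\big|\big\langle (\alpha T^\sharp T + \beta S^\sharp S)x\mid x\big\rangle_A\big| \leq \|\alpha T^\sharp T + \beta S^\sharp S\|_A$. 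I would then invoke Lemma~\ref{le.2.88} applied to the two operators $T^\sharp T$ and $S^\sharp S$, while using Lemma~\ref{l01} to record that $(T^\sharp T)^\sharp=T^\sharp T$ and $(S^\sharp S)^\sharp=S^\sharp S$, so as to obtain
\begin{align*}
\|\alpha T^\sharp T + \beta S^\sharp S\|_A^2 \leq (|\alpha|^2+|\beta|^2)\big\|(T^\sharp T)^2+(S^\sharp S)^2\big\|_A \leq \big\|(T^\sharp T)^2+(S^\sharp S)^2\big\|_A,
\end{align*}
the last step using $|\alpha|^2+|\beta|^2\leq 1$. Taking the supremum over $(\alpha,\beta)$ then yields the uniform bound $|\langle T^\sharp Tx\mid x\rangle_A|^2 + |\langle S^\sharp Sx\mid x\rangle_A|^2 \leq \big\|(T^\sharp T)^2+(S^\sharp S)^2\big\|_A$.

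To finish, I would observe that $R:=(T^\sharp T)^2+(S^\sharp S)^2$ is $A$-selfadjoint: since $(T^\sharp T)^\sharp=T^\sharp T$ one gets $\big((T^\sharp T)^2\big)^\sharp=(T^\sharp T)^2$, and likewise for $S$, whence $R^\sharp=R$. Consequently \eqref{aself1} applies and $\|R\|_A=\omega_A(R)$. Combining the three pieces gives, for every $x\in\mathbb{S}^A(0,1)$,
\begin{align*}
\left(|\langle Tx\mid x\rangle_A|^2 + |\langle Sx\mid x\rangle_A|^2\right)^2 \leq \omega_A\big((T^\sharp T)^2+(S^\sharp S)^2\big) + 2\omega_A^2(S^\sharp T),
\end{align*}
and taking the supremum over $x$ followed by a fourth root produces \eqref{fal9an}. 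The only genuinely delicate point is the middle paragraph: realizing that Lemma~\ref{le.2.8} converts the sum of two squared moduli into a single $A$-inner product that Lemma~\ref{le.2.88} can absorb, and that the seminorm of the $A$-selfadjoint operator $R$ coincides with its $A$-numerical radius. Everything else is a direct reprise of the previous argument.
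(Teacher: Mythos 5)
Your proposal is correct and follows essentially the same route as the paper: the pointwise estimate from Lemma \ref{le.2.7} with $a=x$, $b=Tx$, $c=Sx$, then Lemma \ref{le.2.8} to linearize the sum of two squared moduli, then Lemma \ref{le.2.88} together with $(T^\sharp T)^\sharp=T^\sharp T$ and $(S^\sharp S)^\sharp=S^\sharp S$ from Lemma \ref{l01}, and finally \eqref{aself1} to replace $\big\|(T^\sharp T)^2+(S^\sharp S)^2\big\|_A$ by the $A$-numerical radius. The only deviation is harmless (indeed slightly cleaner): you bound $\big|\big\langle(\alpha T^\sharp T+\beta S^\sharp S)x\mid x\big\rangle_A\big|$ by $\|\alpha T^\sharp T+\beta S^\sharp S\|_A$ via Cauchy--Schwarz, whereas the paper asserts equality through the $A$-selfadjointness of $\alpha T^\sharp T+\beta S^\sharp S$; since only the inequality is needed, both arguments close the proof.
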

\begin{proof}
Let $x\in \mathbb{S}^A(0,1)$. As in the proof of Theorem \ref{th.2.9}, by choosing in Lemma \ref{le.2.7} $a = x, b = Tx$ and $c = Sx$, we get
\begin{align*}
\left(|\langle Tx\mid x\rangle_A|^2 + |\langle Sx\mid x\rangle_A|^2\right)^2\leq \sup_{x\in \mathbb{S}^A(0,1)}\left( |\langle T^\sharp Tx\mid x\rangle_A|^2+ |\langle S^\sharp Sx\mid x\rangle_A|^2\right) + 2\omega_A^2\big(S^\sharp T).
\end{align*}
Hence, by applying Lemma \ref{le.2.8} we obtain
\begin{align*}
&\left(|\langle Tx\mid x\rangle_A|^2 + |\langle Sx\mid x\rangle_A|^2\right)^2\\
& \leq \sup_{x\in \mathbb{S}^A(0,1)}\left(\sup_{|\alpha|^2 + |\beta|^2 \leq 1}\Big|\alpha\langle T^\sharp Tx\mid x\rangle_A + \beta\langle S^\sharp Sx\mid x\rangle_A\Big|^2\right) + 2\omega_A^2\big(S^\sharp T)\\
& = \sup_{x\in \mathbb{S}^A(0,1)}\Big(\sup_{|\alpha|^2 + |\beta|^2 \leq 1}\Big|\Big\langle \left[\alpha T^\sharp T + \beta S^\sharp S\right]x\mid x\Big\rangle_A\Big|^2\Big) + 2\omega_A^2\big(S^\sharp T)\\
& = \sup_{|\alpha|^2 + |\beta|^2 \leq 1}\Big(\sup_{x\in \mathbb{S}^A(0,1)}\Big|\Big\langle \left[\alpha T^\sharp T + \beta S^\sharp S\right]x\mid x\Big\rangle_A\Big|^2\Big) + 2\omega_A^2\big(S^\sharp T\big).
\end{align*}
On the other hand, it can be see that the operator $\alpha T^\sharp T + \beta S^\sharp S$ is an $A$-selfadjoint operator and then by \eqref{aself1}, we have
$$\sup_{x\in \mathbb{S}^A(0,1)}\Big|\Big\langle \left[\alpha T^\sharp T + \beta S^\sharp S\right]x\mid x\Big\rangle_A\Big|=\|\alpha T^\sharp T + \beta S^\sharp S\|_A.$$
So, by using Lemma \ref{le.2.88}, we get
\begin{align*}
&\left(|\langle Tx\mid x\rangle_A|^2 + |\langle Sx\mid x\rangle_A|^2\right)^2\\
& \leq \sup_{|\alpha|^2 + |\beta|^2 \leq 1}\|\alpha T^\sharp T + \beta S^\sharp S\|_A^2 + 2\omega_A^2\big(S^\sharp T\big)\\
& \leq \sup_{|\alpha|^2 + |\beta|^2 \leq 1}(|\alpha|^2 + |\beta|^2)\Big\|\,(T^\sharp T)^\sharp T^\sharp T + \left[S^\sharp S\right]^\sharp S^\sharp S\,\Big\|_A + 2\omega_A^2\big(S^\sharp T\big)\\
& =\sup_{|\alpha|^2 + |\beta|^2 \leq 1}(|\alpha|^2 + |\beta|^2)\Big\|\,(T^\sharp T)^2 + (S^\sharp S)^2\,\Big\|_A + 2\omega_A^2\big(S^\sharp T\big)\\
& =\Big\|\,(T^\sharp T)^2 + (S^\sharp S)^2\,\Big\|_A + 2\omega_A^2\big(S^\sharp T\big)\\
& =\omega_A\left[(T^\sharp T)^2 + (S^\sharp S)^2\right] + 2\omega_A^2\big(S^\sharp T\big),
\end{align*}
where the last equality follows from \eqref{aself1} since $(T^\sharp T)^2 + (S^\sharp S)^2\geq_A0$. Thus, we get
\begin{align*}
|\langle Tx\mid x\rangle_A|^2 + |\langle Sx\mid x\rangle_A|^2 \leq \sqrt{\omega_A\left[(T^\sharp T)^2 + (S^\sharp S)^2\right] + 2\omega_A^2\big(S^\sharp T\big)},
\end{align*}
for all $x\in \mathbb{S}^A(0,1)$. Finally, by taking the supremum over all $x\in \mathbb{S}^A(0,1)$ in the above inequality we get \eqref{fal9an} as required.
\end{proof}
The following corollary is an immediate consequence of Theorem \ref{th.2.9} and extends \cite[Theorem 2.11]{zamanicheb2020}.
\begin{corollary}\label{cor01}
Let $T\in \mathcal{B}_A(\mathcal{H})$. Then,
\begin{align*}
{d\omega_A}(T) \leq \left[\omega_A\Big((T^\sharp T)^2 + (T^\sharp T)^4\Big)+ 2\omega_A^2\big(T^\sharp T^2\big)\right]^{\frac{1}{4}}.
\end{align*}
\end{corollary}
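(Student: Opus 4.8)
The plan is to recognize the $A$-Davis--Wielandt radius as a special instance of the $A$-joint numerical radius and then feed it into the bound just established. Indeed, by the identity \eqref{davis} we have $d\omega_A(T)=\omega_{A,\text{e}}(T,T^\sharp T)$, so it suffices to apply Theorem~\ref{th.2.9}, namely the estimate \eqref{fal9an}, to the pair $(T,S)$ with the choice $S=T^\sharp T$. The entire content of the corollary then reduces to simplifying the three operator expressions on the right-hand side of \eqref{fal9an} under this substitution.

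The only computational input is the $\sharp$-calculus. Since $T^\sharp T$ is $A$-selfadjoint and satisfies $\mathcal{R}(T^\sharp T)\subseteq\overline{\mathcal{R}(A)}$, Lemma~\ref{l01} yields $(T^\sharp T)^\sharp=T^\sharp T$. Consequently $S^\sharp S=(T^\sharp T)^\sharp(T^\sharp T)=(T^\sharp T)^2$, and therefore $(S^\sharp S)^2=(T^\sharp T)^4$. Using $(T^\sharp T)^\sharp=T^\sharp T$ once more together with associativity, I also obtain $S^\sharp T=(T^\sharp T)^\sharp T=(T^\sharp T)T=T^\sharp T^2$. Substituting these three identities into \eqref{fal9an} turns its right-hand side into exactly
$$\left[\omega_A\big((T^\sharp T)^2+(T^\sharp T)^4\big)+2\omega_A^2\big(T^\sharp T^2\big)\right]^{\frac{1}{4}},$$
which is the asserted bound.

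There is no genuine obstacle here: the result is a direct corollary, and the whole argument rests on the reduction \eqref{davis} and the elementary computations above. The single point requiring a moment's care is the verification that $(T^\sharp T)^\sharp=T^\sharp T$, which is precisely where Lemma~\ref{l01} is invoked; everything else is routine substitution.
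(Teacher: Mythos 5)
Your proposal is correct and follows exactly the paper's own argument: identify $d\omega_A(T)=\omega_{A,\text{e}}(T,T^\sharp T)$ via \eqref{davis}, invoke Lemma~\ref{l01} to get $(T^\sharp T)^\sharp=T^\sharp T$, and substitute $S=T^\sharp T$ into \eqref{fal9an}, which gives $S^\sharp S=(T^\sharp T)^2$, $(S^\sharp S)^2=(T^\sharp T)^4$ and $S^\sharp T=T^\sharp T^2$. Your write-up merely spells out the substitutions that the paper leaves implicit.
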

\begin{proof}
By Lemma \ref{l01}, we have $(T^\sharp T)^\sharp=T^\sharp T$. So, by replacing $S$ by $T^\sharp T$ in \eqref{fal9an} and then using \eqref{davis} we get the required result.
\end{proof}
The following lemma is useful in the sequel.
\begin{lemma}\label{leme1}
For any $a, b, c\in \mathcal{H}$, we have
\begin{align}\label{n03}
|\langle a\mid b\rangle_A|^2 + |\langle a\mid c\rangle_A|^2
\leq \|a\|_A^2\Big(\max\{\|b\|_A^2, \|c\|_A^2\} + |\langle b\mid c\rangle_A|\Big).
\end{align}
\end{lemma}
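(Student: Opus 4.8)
The plan is to follow the same reduction as in the proof of Lemma~\ref{le.2.7}: first establish the companion inequality in the genuine inner product space $(\mathcal{H},\langle\cdot\mid\cdot\rangle)$, namely
\begin{align*}
|\langle x\mid y\rangle|^2 + |\langle x\mid z\rangle|^2 \le \|x\|^2\Big(\max\{\|y\|^2,\|z\|^2\} + |\langle y\mid z\rangle|\Big)
\end{align*}
for all $x,y,z\in\mathcal{H}$, and then specialize it to $x=A^{1/2}a$, $y=A^{1/2}b$, $z=A^{1/2}c$. Since $\langle A^{1/2}u\mid A^{1/2}v\rangle = \langle u\mid v\rangle_A$ and $\|A^{1/2}u\|=\|u\|_A$ for all $u,v\in\mathcal{H}$, this substitution turns the classical inequality into \eqref{n03} verbatim, exactly as \eqref{zzkk22} was turned into \eqref{extencu0}.

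To prove the classical inequality, I would introduce the bounded linear map $\Phi:\mathcal{H}\to\mathbb{C}^2$ defined by $\Phi x = (\langle x\mid y\rangle,\langle x\mid z\rangle)$, so that the left-hand side equals $\|\Phi x\|^2\le\|\Phi\|^2\|x\|^2$. A short computation gives $\Phi^*(\alpha,\beta)=\alpha y+\beta z$, whence $\Phi\Phi^*$ is represented on $\mathbb{C}^2$ by the Gram matrix
\begin{align*}
G=\begin{pmatrix}\|y\|^2 & \langle z\mid y\rangle\\ \langle y\mid z\rangle & \|z\|^2\end{pmatrix}.
\end{align*}
Since $\|\Phi\|^2=\|\Phi\Phi^*\|=\|G\|$ and $G$ is Hermitian and positive semidefinite, $\|G\|$ equals its largest eigenvalue
\begin{align*}
\lambda_{\max}=\frac{\|y\|^2+\|z\|^2}{2}+\sqrt{\Big(\tfrac{\|y\|^2-\|z\|^2}{2}\Big)^2+|\langle y\mid z\rangle|^2}.
\end{align*}
It then remains to check that $\lambda_{\max}\le \max\{\|y\|^2,\|z\|^2\}+|\langle y\mid z\rangle|$.

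Writing $p=\|y\|^2$, $q=\|z\|^2$ and $w=|\langle y\mid z\rangle|$ and assuming without loss of generality that $p\ge q$, this last bound reads $\sqrt{(\tfrac{p-q}{2})^2+w^2}\le\tfrac{p-q}{2}+w$; both sides being nonnegative, squaring reduces it to $0\le(p-q)w$, which holds. I expect this final eigenvalue estimate to be the only genuinely computational point, and it collapses to a trivial inequality once the terms are arranged; the real content is the passage to the Gram matrix and the reduction through $A^{1/2}$, both of which are routine given the identities already recorded in the paper. An alternative, avoiding operator theory on $\mathbb{C}^2$, would be to prove the scalar inequality directly by completing the square in $\alpha,\beta$, but the Gram-matrix argument seems cleaner and more transparent.
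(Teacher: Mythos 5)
Your proof is correct, but it follows a genuinely different route from the paper's. The paper proves \eqref{n03} directly in the semi-inner product, with no detour through $A^{1/2}$: it applies the Cauchy--Schwarz inequality to $\langle a\mid u\rangle_A$ with the auxiliary vector $u=\langle a\mid b\rangle_A b+\langle a\mid c\rangle_A c$, expands $\|u\|_A^2$, bounds the cross term $2|\langle a\mid b\rangle_A|\cdot|\langle c\mid a\rangle_A|\cdot|\langle b\mid c\rangle_A|$ by the arithmetic--geometric mean inequality, factors out $\max\{\|b\|_A^2,\|c\|_A^2\}+|\langle b\mid c\rangle_A|$, and finally divides through by the left-hand side (treating the degenerate case separately). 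You instead reduce to the genuine inner product via $x=A^{1/2}a$, $y=A^{1/2}b$, $z=A^{1/2}c$ --- the same reduction the paper uses for Lemma \ref{le.2.7}, where the classical inequality is simply quoted from Dragomir's book --- and then prove the classical statement yourself by identifying $\sup_{\|x\|\le 1}\big(|\langle x\mid y\rangle|^2+|\langle x\mid z\rangle|^2\big)$ with $\|\Phi\|^2=\|\Phi\Phi^*\|=\lambda_{\max}(G)$ for the Gram matrix $G$, and checking $\lambda_{\max}(G)\le\max\{\|y\|^2,\|z\|^2\}+|\langle y\mid z\rangle|$; your computation of $\Phi^*$, the eigenvalue formula, and the final squaring step all check out. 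Your route buys two things: it exhibits the sharp constant (the optimal bound is exactly $\|x\|^2\lambda_{\max}(G)$, so the stated inequality is seen as a relaxation of it), and it avoids the division step and hence any case analysis --- a point worth noting, since the paper's own case handling is slightly off: $a,b,c\notin\mathcal{N}(A)$ does not force $|\langle a\mid b\rangle_A|^2+|\langle a\mid c\rangle_A|^2\neq 0$, although that gap is harmless because \eqref{n03} is trivial whenever its left side vanishes. What the paper's route buys is that it is entirely elementary and self-contained within the seminorm framework, using nothing beyond Cauchy--Schwarz, whereas yours leans on the $C^*$-identity $\|\Phi\Phi^*\|=\|\Phi\|^2$ and an eigenvalue computation on $\mathbb{C}^2$.
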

\begin{proof}
Let $a, b, c\in \mathcal{H}$ be such that $a,b,c\notin \mathcal{N}(A)$. Then, $|\langle a\mid b\rangle_A|^2 + |\langle a\mid c\rangle_A|^2\neq0$. By applying the Cauchy-Schwarz inequality we see that
\begin{align}\label{n01}
\big(|\langle a\mid b\rangle_A|^2 + |\langle a\mid c\rangle_A|^2\big)^2
&=\big(\langle a\mid b\rangle_A\langle b\mid a\rangle_A + \langle a\mid c\rangle_A\langle c\mid a\rangle_A\big)^2\nonumber\\
&=\Big(\langle a\mid \left(\langle a\mid b\rangle_Ab+\langle a\mid c\rangle_Ac\right)\rangle_A\Big)^2\nonumber\\
&= \|a\|_A^2\big\|\langle a\mid b\rangle_Ab+\langle a\mid c\rangle_Ac\big\|_A^2.
\end{align}
On the other hand, one observes
\begin{align}\label{n02}
&\big\|\langle a\mid b\rangle_Ab+\langle a\mid c\rangle_Ac\big\|_A^2\nonumber\\
& = |\langle a\mid b\rangle_A|^2\|b\|_A^2+|\langle a\mid c\rangle_A|^2\|c\|_A^2+2\Re\big(\langle a\mid b\rangle_A \langle c\mid a\rangle_A \langle b\mid c\rangle_A\big)\nonumber\\
&\leq |\langle a\mid b\rangle_A|^2\|b\|_A^2+|\langle a\mid c\rangle_A|^2\|c\|_A^2+2|\langle a\mid b\rangle_A|\cdot| \langle c\mid a\rangle_A| \cdot|\langle b\mid c\rangle_A|\nonumber\\
&\leq |\langle a\mid b\rangle_A|^2\|b\|_A^2+|\langle a\mid c\rangle_A|^2\|c\|_A^2+\left(|\langle a\mid b\rangle_A|^2+| \langle a\mid c\rangle_A|^2\right)|\langle b\mid c\rangle_A|\nonumber\\
&\leq \left(|\langle a\mid b\rangle_A|^2+|\langle a\mid c\rangle_A|^2\right)\Big(\max\{\|b\|_A^2,\|c\|_A^2\}+|\langle b\mid c\rangle_A|\Big).
\end{align}
By combining \eqref{n01} together \eqref{n02} we get \eqref{n03}. If $a,b,c\in \mathcal{N}(A)$, then \eqref{n03} holds trivially. This proves the desired result.
\end{proof}
Now, we are in a position to prove the following theorem.
\begin{theorem}\label{mai9}
Let $T\in \mathcal{B}_A(\mathcal{H})$. Then
\begin{align}\label{derive1}
\omega_{A,\text{e}}(T,S)
&\leq \frac{\sqrt{2}}{2}\sqrt{\left(\|T^\sharp T+S^\sharp S\|_A+\|T^\sharp T-S^\sharp S\|_A \right)+\omega_A(S^\sharp T)} \\
 &\leq \sqrt{2}\sqrt{\max\left(\|T\|_A^2+\|S\|_A^2 \right)+\omega_A(S^\sharp T)}\nonumber.
\end{align}
\end{theorem}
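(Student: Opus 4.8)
The plan is to derive everything from Lemma~\ref{leme1}, which is tailored to exactly two vectors. Fix $x\in\mathbb{S}^A(0,1)$ and apply \eqref{n03} with $a=x$, $b=Tx$ and $c=Sx$. Since $\|x\|_A=1$ and $|\langle x\mid Tx\rangle_A|=|\langle Tx\mid x\rangle_A|$, this gives the pointwise estimate
\[
|\langle Tx\mid x\rangle_A|^2+|\langle Sx\mid x\rangle_A|^2\leq \max\{\|Tx\|_A^2,\|Sx\|_A^2\}+|\langle Tx\mid Sx\rangle_A|.
\]
This inequality is the whole analytic input; everything afterwards is bookkeeping that recasts the three quantities on the right in terms of the operator seminorms and the $A$-numerical radius appearing in the statement.

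Next I would linearize the maximum. Writing $u=\|Tx\|_A^2=\langle T^\sharp Tx\mid x\rangle_A$ and $v=\|Sx\|_A^2=\langle S^\sharp Sx\mid x\rangle_A$ and using the elementary identity $\max\{u,v\}=\tfrac12\big(u+v+|u-v|\big)$, the maximum term becomes $\tfrac12\langle(T^\sharp T+S^\sharp S)x\mid x\rangle_A+\tfrac12\big|\langle(T^\sharp T-S^\sharp S)x\mid x\rangle_A\big|$. For the cross term I would use the defining property of the $A$-adjoint to rewrite $\langle Tx\mid Sx\rangle_A=\langle S^\sharp Tx\mid x\rangle_A$, so that $|\langle Tx\mid Sx\rangle_A|\leq \omega_A(S^\sharp T)$ after passing to the supremum.

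I would then take the supremum over $x\in\mathbb{S}^A(0,1)$, bounding the supremum of a sum by the sum of the suprema. The operators $T^\sharp T+S^\sharp S$ (which is in fact $A$-positive) and $T^\sharp T-S^\sharp S$ are both $A$-selfadjoint, so \eqref{aself1} turns their $A$-numerical radii into seminorms, yielding $\omega_{A,\text{e}}(T,S)^2\leq \tfrac12\big(\|T^\sharp T+S^\sharp S\|_A+\|T^\sharp T-S^\sharp S\|_A\big)+\omega_A(S^\sharp T)$; extracting the square root produces the first inequality, with the factor $\tfrac{\sqrt2}{2}$ coming from the $\tfrac12$. The one lossy step is splitting $\sup$ of a sum into a sum of suprema, and this is the slack that the estimate absorbs; it is the step I would scrutinize most.

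For the second inequality I would replace the two seminorms by a single maximum. The triangle inequality together with \eqref{diez} gives $\|T^\sharp T+S^\sharp S\|_A\leq\|T\|_A^2+\|S\|_A^2$, while the positivity relations $T^\sharp T\geq_A0$ and $S^\sharp S\geq_A0$ give, for every $x\in\mathbb{S}^A(0,1)$, the sandwich $-\|S\|_A^2\leq\langle(T^\sharp T-S^\sharp S)x\mid x\rangle_A\leq\|T\|_A^2$, whence $\|T^\sharp T-S^\sharp S\|_A\leq\max\{\|T\|_A^2,\|S\|_A^2\}$ after invoking \eqref{aself1} once more. Feeding these into the first bound and absorbing the numerical constants into the factor $\sqrt2$ gives the stated maximum form. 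The main obstacle I anticipate is exactly this order argument: one must check that a two-sided $A$-form comparison controls $\|T^\sharp T-S^\sharp S\|_A$ by the larger of the two endpoint seminorms, which reduces, via \eqref{aself1}, to evaluating the $A$-quadratic form on $\mathbb{S}^A(0,1)$.
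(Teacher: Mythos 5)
Your proposal is correct and follows essentially the same route as the paper's own proof: the same application of Lemma \ref{leme1} with $a=x$, $b=Tx$, $c=Sx$, the same linearization $\max\{t,s\}=\tfrac{1}{2}\left(t+s+|t-s|\right)$, the same rewriting of the cross term as $\langle S^\sharp Tx\mid x\rangle_A$, and the same use of \eqref{aself1} on the $A$-selfadjoint operators $T^\sharp T\pm S^\sharp S$ before taking the supremum (your sandwich bound $\|T^\sharp T-S^\sharp S\|_A\le\max\{\|T\|_A^2,\|S\|_A^2\}$ for the second inequality is a slight sharpening of the paper's plain triangle-inequality step, but both suffice). One shared caveat: like the paper, what you actually establish is $\omega_{A,\text{e}}^2(T,S)\le\tfrac{1}{2}\big(\|T^\sharp T+S^\sharp S\|_A+\|T^\sharp T-S^\sharp S\|_A\big)+\omega_A(S^\sharp T)$, where the factor $\tfrac{1}{2}$ does not multiply $\omega_A(S^\sharp T)$, whereas the displayed bound \eqref{derive1} puts the entire sum, including $\omega_A(S^\sharp T)$, under $\tfrac{\sqrt{2}}{2}\sqrt{\cdot}$ --- a discrepancy inherited from the paper rather than introduced by you.
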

\begin{proof}
Notice first that for any two real numbers $t$ and $s$ we have
\begin{equation}\label{r}
\max\{t,s\}=\frac{1}{2}\left(t+s+|t-s|\right).
\end{equation}
Now, let $x\in \mathbb{S}^A(0,1)$. By letting $a=x$, $b=Tx$ and $c=Sx$ in Lemma \ref{leme1} we get
\begin{align*}
&|\langle Tx\mid x\rangle_A|^2+|\langle Sx\mid x\rangle_A|^2\\
&\leq \max\left\{\|Tx\|_A^2, \|Sx\|_A^2\right\}+|\langle Tx\mid Sx\rangle_A|\\
&=\frac{1}{2}\Big(\|Tx\|_A^2+ \|Sx\|_A^2+\left|\|Tx\|_A^2-\|Sx\|_A^2 \right|\Big)+|\langle Tx\mid Sx\rangle_A|\quad (\text{by }\,\eqref{r})\\
 &=\frac{1}{2}\Big(\langle (T^\sharp T+S^\sharp S)x\mid x\rangle_A+\left|\langle (T^\sharp T-S^\sharp S)x\mid x\rangle_A \right|\Big)+\omega_A(S^\sharp T)\\
  &\leq  \frac{1}{2}\Big(\omega_A(T^\sharp T+S^\sharp S)+\omega_A(T^\sharp T-S^\sharp S)\Big)+\omega_A(S^\sharp T)\\
 &=\frac{1}{2}\Big(\|T^\sharp T+S^\sharp S\|_A+\|T^\sharp T-S^\sharp S\|_A\Big)+\omega_A(S^\sharp T),
\end{align*}
where the last inequality follows from \eqref{aself1} since the operators $T^\sharp T\pm S^\sharp S$ are $A$-selfadjoint. So, we get
$$
|\langle Tx\mid x\rangle_A|^2+|\langle Sx\mid x\rangle_A|^2\leq  \frac{1}{2}\Big(\|T^\sharp T+S^\sharp S\|_A+\|T^\sharp T-S^\sharp S\|_A\Big)+\omega_A(S^\sharp T),$$
for every $x\in \mathbb{S}^A(0,1)$. Thus, by taking the supremum over all $x\in \mathbb{S}^A(0,1)$ in above inequality, we get the first inequality in Theorem \ref{mai9}. Now, the second inequality in Theorem \ref{mai9} follows immediately by applying the triangle inequality and \eqref{diez}.
\end{proof}

%%%%%%%%%%%%%%%%%%%%%%%%%%%%%%%%%%%%%%%%%%%%%%%%%%%%%%%%%%%%%%%%%%%%%%%%
We can state the following upper bound for the $A$-Davis-Wielandt radius which generalizes and improves \cite[Theorem 2.14.]{zamanicheb2020}.
\begin{corollary}\label{th.2.12}
Let $T\in \mathcal{B}_A(\mathcal{H})$. Then,
\begin{align*}
{d\omega_A}(T) \leq  \sqrt{\frac{1}{2}\Big[\omega_A\Big((T^\sharp T)^2 + T^\sharp T\Big)+ \omega_A\Big((T^\sharp T)^2 - T^\sharp T\Big)\Big] + \omega_A(T^\sharp T^2)}.
\end{align*}
\end{corollary}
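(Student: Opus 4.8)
The plan is to derive this bound as a direct specialization of Theorem \ref{mai9}, taking advantage of the identity \eqref{davis}, which records that $d\omega_A(T) = \omega_{A,\text{e}}(T, T^\sharp T)$. Concretely, I would apply Theorem \ref{mai9} to the pair $(T, S)$ with $S = T^\sharp T$, so that the entire argument reduces to computing what the three quantities $\|T^\sharp T + S^\sharp S\|_A$, $\|T^\sharp T - S^\sharp S\|_A$ and $\omega_A(S^\sharp T)$ become under this substitution, and then rewriting the resulting seminorms as $A$-numerical radii.

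First I would record the algebraic simplifications. Since $T^\sharp T \geq_A 0$ and $\mathcal{R}(T^\sharp T) \subseteq \overline{\mathcal{R}(A)}$, Lemma \ref{l01} gives $(T^\sharp T)^\sharp = T^\sharp T$ (exactly as used in Corollary \ref{cor01}); hence with $S = T^\sharp T$ one gets $S^\sharp S = (T^\sharp T)(T^\sharp T) = (T^\sharp T)^2$ and $S^\sharp T = (T^\sharp T)T = T^\sharp T^2$. Consequently $T^\sharp T + S^\sharp S = (T^\sharp T)^2 + T^\sharp T$ and $T^\sharp T - S^\sharp S = -\big((T^\sharp T)^2 - T^\sharp T\big)$.

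Next I would convert the two seminorms into numerical radii. Both $(T^\sharp T)^2 + T^\sharp T$ and $(T^\sharp T)^2 - T^\sharp T$ are $A$-selfadjoint (indeed the first is $A$-positive), so \eqref{aself1} yields $\|(T^\sharp T)^2 + T^\sharp T\|_A = \omega_A\big((T^\sharp T)^2 + T^\sharp T\big)$, and, using that $\|\cdot\|_A$ is unchanged by an overall sign, $\|T^\sharp T - S^\sharp S\|_A = \|(T^\sharp T)^2 - T^\sharp T\|_A = \omega_A\big((T^\sharp T)^2 - T^\sharp T\big)$. Substituting these three computations into the inequality established in the proof of Theorem \ref{mai9}, namely $\omega_{A,\text{e}}^2(T,S) \leq \tfrac12\big(\|T^\sharp T + S^\sharp S\|_A + \|T^\sharp T - S^\sharp S\|_A\big) + \omega_A(S^\sharp T)$, and taking the square root produces exactly the claimed bound.

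I do not anticipate a genuine obstacle, as the statement is a routine corollary; the only points requiring care are the identification $(T^\sharp T)^\sharp = T^\sharp T$ via Lemma \ref{l01} and the sign-invariance of $\|\cdot\|_A$ that turns $T^\sharp T - (T^\sharp T)^2$ into $(T^\sharp T)^2 - T^\sharp T$ inside the seminorm. The one subtlety worth flagging is that one must feed $S = T^\sharp T$ into the inequality \emph{as actually derived} inside the proof of Theorem \ref{mai9}, where $\omega_A(S^\sharp T)$ enters with coefficient $1$ outside the factor $\tfrac12$, so that the final term $\omega_A(T^\sharp T^2)$ appears undivided and the asserted bound matches on the nose.
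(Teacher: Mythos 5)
Your proposal is correct and is essentially the paper's own proof: the paper obtains this corollary by substituting $S = T^\sharp T$ into Theorem \ref{mai9} (using Lemma \ref{l01} to get $(T^\sharp T)^\sharp = T^\sharp T$ and \eqref{davis} to identify $d\omega_A(T)$ with $\omega_{A,\text{e}}(T,T^\sharp T)$), exactly as you do. Your closing remark is also on point: the corollary's statement matches the inequality as actually derived inside the proof of Theorem \ref{mai9}, where $\omega_A(S^\sharp T)$ enters with coefficient $1$ rather than $\tfrac{1}{2}$, so invoking that proof-internal form is the right move.
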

\begin{proof}
Follows immediately by proceeding as in the proof of Corollary \ref{cor01}.
\end{proof}
%%%%%%%%%%%%%%%%%%%%%%%%%%%%%%%%%%%%%%%%%%%%%%%%%
For the sequel, for any arbitrary operator $T\in {\mathcal B}_A({\mathcal H})$, we write
$$\Re_A(T):=\frac{T+T^{\sharp}}{2}\;\;\text{ and }\;\;\Im_A(T):=\frac{T-T^{\sharp}}{2i}.$$
Furthermore, it is useful to recall the following results.
\begin{lemma}(\cite{feki02})\label{adma}
Let $T\in \mathcal{B}(\mathcal{H})$ be an $A$-selfadjoint operator. Then, $T^{\sharp}$ is $A$-selfadjoint and
\begin{equation*}
({T^{\sharp}})^{\sharp}=T^{\sharp}.
\end{equation*}
\end{lemma}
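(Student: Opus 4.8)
The plan is to prove the two assertions in turn: first that $T^{\sharp}$ is $A$-selfadjoint, and then, using this, to derive the identity $(T^{\sharp})^{\sharp}=T^{\sharp}$ as a direct application of Lemma \ref{l01}.

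Since $T$ is $A$-selfadjoint it lies in $\mathcal{B}_A(\mathcal{H})$, so $T^{\sharp}$ is well defined and, by the preliminaries, $T^{\sharp}\in\mathcal{B}_A(\mathcal{H})$ (so that $(T^{\sharp})^{\sharp}$ in turn makes sense). I would exploit the two structural facts attached to the reduced solution of the equation $AX=T^{*}A$: the operator identity $AT^{\sharp}=T^{*}A$, valid because $T^{\sharp}$ is a solution of that equation, and the range inclusion $\mathcal{R}(T^{\sharp})\subseteq\overline{\mathcal{R}(A)}$, valid because $T^{\sharp}$ is the \emph{reduced} solution and $A^{*}=A$.

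To show $T^{\sharp}$ is $A$-selfadjoint I must check that $AT^{\sharp}$ is selfadjoint. Here the hypothesis on $T$ enters: $T$ being $A$-selfadjoint means $AT$ is selfadjoint, i.e. $AT=(AT)^{*}=T^{*}A$. Combining this with the defining equation above gives $AT^{\sharp}=T^{*}A=AT$, and since the right-hand side is selfadjoint so is $AT^{\sharp}$; hence $T^{\sharp}$ is $A$-selfadjoint. Finally, to obtain $(T^{\sharp})^{\sharp}=T^{\sharp}$ I would invoke Lemma \ref{l01} with $T^{\sharp}$ playing the role of $T$: having just verified that $T^{\sharp}$ is $A$-selfadjoint and that $\mathcal{R}(T^{\sharp})\subseteq\overline{\mathcal{R}(A)}$, the criterion of that lemma is met and returns $(T^{\sharp})^{\sharp}=T^{\sharp}$.

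The argument is essentially bookkeeping; the only place demanding care is to use the reduced-solution characterization of $T^{\sharp}$ — namely the identity $AT^{\sharp}=T^{*}A$ and the inclusion $\mathcal{R}(T^{\sharp})\subseteq\overline{\mathcal{R}(A)}$ — rather than manipulating the Moore--Penrose inverse $A^{\dagger}$ directly, which would drag in domain and boundedness subtleties. I expect assembling exactly the right hypotheses to feed into Lemma \ref{l01} to be the main, if modest, obstacle.
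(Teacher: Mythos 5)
Your proof is correct, and it is essentially the argument one expects here: the paper itself offers no proof of this lemma (it is imported from \cite{feki02}), so the natural check is whether your argument closes using only the paper's stated facts, which it does. Specifically, from the reduced-solution identity $AT^{\sharp}=T^{*}A$ and the hypothesis $AT=(AT)^{*}=T^{*}A$ you correctly get $AT^{\sharp}=AT$ selfadjoint, and the reduced-solution inclusion $\mathcal{R}(T^{\sharp})\subseteq\overline{\mathcal{R}(A)}$ is exactly what Lemma~\ref{l01} needs to yield $(T^{\sharp})^{\sharp}=T^{\sharp}$.
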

\begin{lemma}(\cite[Theorem 5.1]{bakfeki04})\label{pos2}
Let $T \in \mathcal{B}(\mathcal{H})$ be an $A$-selfadjoint operator. Then, for any positive integer $n$ we have
\begin{equation*}
\|T^n\|_A=\|T\|_A^n.
\end{equation*}
\end{lemma}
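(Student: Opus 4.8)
The plan is to prove the two inequalities $\|T^n\|_A \le \|T\|_A^n$ and $\|T^n\|_A \ge \|T\|_A^n$ separately. The first is immediate: since $T$ is $A$-selfadjoint it lies in $\mathcal{B}_A(\mathcal{H}) \subseteq \mathcal{B}_{A^{1/2}}(\mathcal{H})$, so iterating the submultiplicativity relation \eqref{crucial0} gives $\|T^n\|_A \le \|T\|_A^n$ at once. The whole content therefore sits in the reverse inequality, and the crux is the base case $n=2$, namely $\|T^2\|_A = \|T\|_A^2$; from there a doubling-and-squeezing argument will handle every $n$.

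For the base case, I would first observe that $T^2$ is again $A$-selfadjoint: from $AT = T^*A$ one gets $AT^2 = T^*AT = (T^*)^2 A = (T^2)^* A$, and the latter operator is selfadjoint. Hence \eqref{aself1} applies and $\|T^2\|_A = \omega_A(T^2)$. The key computation is then that $\langle T^2 x \mid x\rangle_A$ coincides with $\|Tx\|_A^2$; indeed, using $AT = T^*A$,
\[
\langle T^2 x \mid x\rangle_A = \langle AT^2 x \mid x\rangle = \langle T^*ATx \mid x\rangle = \langle ATx \mid Tx\rangle = \|Tx\|_A^2 .
\]
In particular this quantity is real and nonnegative, so taking the supremum over the $A$-unit sphere and invoking \eqref{semii} yields $\omega_A(T^2) = \sup_{\|x\|_A = 1}\|Tx\|_A^2 = \|T\|_A^2$. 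Combined with the previous remark, this gives the desired identity $\|T^2\|_A = \|T\|_A^2$.

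Iterating the base case along powers of two is routine. Since $T^{2^k}$ is $A$-selfadjoint for every $k$ (apply the observation above inductively to $T, T^2, T^4, \dots$), the base case gives $\|T^{2^k}\|_A = \|T^{2^{k-1}}\|_A^2$, whence $\|T^{2^k}\|_A = \|T\|_A^{2^k}$ by induction. To pass from powers of two to an arbitrary $n$, I would fix $k$ with $2^k \ge n$ and write $T^{2^k} = T^n T^{2^k - n}$, so that \eqref{crucial0} yields
\[
\|T\|_A^{2^k} = \|T^{2^k}\|_A \le \|T^n\|_A\,\|T^{2^k - n}\|_A \le \|T^n\|_A\,\|T\|_A^{2^k - n}.
\]
If $\|T\|_A = 0$ the claim is trivial (both sides vanish by the easy direction); otherwise dividing by $\|T\|_A^{2^k - n}$ gives $\|T\|_A^n \le \|T^n\|_A$, which together with the easy direction finishes the proof. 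The only genuine obstacle is the base case: once the identity $\langle T^2 x \mid x\rangle_A = \|Tx\|_A^2$ is in hand, the remainder is a standard spectral-radius-type squeeze.
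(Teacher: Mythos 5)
Your proposal is correct in every step, but note that the paper itself contains no proof of this lemma at all: it is quoted verbatim from \cite[Theorem 5.1]{bakfeki04}, so there is no internal argument to compare against. What you have supplied is a self-contained proof, and it is the natural one, modelled on the classical $C^*$-identity argument for selfadjoint operators. The three pillars all check out against facts the paper does make available: (i) $A$-selfadjointness is preserved under squaring, since $AT=T^*A$ gives $AT^2=T^*AT=(T^*AT)^*$; (ii) the identity $\langle T^2x\mid x\rangle_A=\langle T^*ATx\mid x\rangle=\|Tx\|_A^2$ together with \eqref{aself1} and the second equality in \eqref{semii} yields the base case $\|T^2\|_A=\omega_A(T^2)=\|T\|_A^2$; and (iii) the power-of-two squeeze via submultiplicativity \eqref{crucial0} (which applies because $\mathcal{B}_{A^{1/2}}(\mathcal{H})$ is closed under products) recovers the general exponent, with the degenerate case $\|T\|_A=0$ correctly set aside. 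The only cosmetic slip is the parenthetical claim that $(T^*)^2A$ ``is selfadjoint''; what is actually needed, and what your computation in fact shows, is that $AT^2$ equals its own adjoint $(T^2)^*A$. In effect your argument reproves the cited result of \cite{bakfeki04} using only the toolbox already quoted in this paper, which makes the lemma self-contained here rather than an external import; that is a genuine, if modest, gain over the citation.
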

As an application of Theorem \ref{mai9}, we derive the following upper bound of the $A$-numerical radius of operators in $\mathcal{B}_{A}(\mathcal{H})$.
\begin{corollary}\label{twil}
Let $T\in \mathcal{B}_{A}(\mathcal{H})$. Then,
\begin{align}\label{rr01}
\omega_A(T) \le \frac{1}{2}\sqrt{ \|T^\sharp T+TT^\sharp\|_A+\|T^2+(T^\sharp)^2\|_A+ \omega_A\Big((T^\sharp+T)(T-T^\sharp)\Big) }.
\end{align}
Moreover, the inequality \eqref{rr01} is sharp.
\end{corollary}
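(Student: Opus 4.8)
The plan is to realise $\omega_A(T)$ as the $A$-joint numerical radius of the $A$-Cartesian components $\Re_A(T)$ and $\Im_A(T)$ and then feed this pair into Theorem \ref{mai9}. The starting point is the observation that, by the defining relation of the $A$-adjoint, $\langle T^\sharp x\mid x\rangle_A=\overline{\langle Tx\mid x\rangle_A}$ for every $x\in\mathcal H$, so that
$$\langle \Re_A(T)x\mid x\rangle_A=\Re\langle Tx\mid x\rangle_A\quad\text{and}\quad \langle \Im_A(T)x\mid x\rangle_A=\Im\langle Tx\mid x\rangle_A.$$
Squaring and summing these two real quantities gives $|\langle Tx\mid x\rangle_A|^2=|\langle \Re_A(T)x\mid x\rangle_A|^2+|\langle \Im_A(T)x\mid x\rangle_A|^2$ for each $x$, and taking the supremum over $x\in\mathbb S^A(0,1)$ yields the reduction $\omega_A(T)=\omega_{A,\text{e}}\big(\Im_A(T),\Re_A(T)\big)$, which is the identity I would exploit.

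Since $T\in\mathcal B_A(\mathcal H)$ forces $T^\sharp\in\mathcal B_A(\mathcal H)$, both $B:=\Re_A(T)$ and $C:=\Im_A(T)$ lie in $\mathcal B_A(\mathcal H)$, and a short check shows they are $A$-selfadjoint. The decisive simplification is that for an $A$-selfadjoint operator $B$ one has $AB^\sharp=AB$ (because $B^\sharp=A^\dagger B^*A$ and $B^*A=AB$); consequently $A(B^\sharp B)=A(B^2)$ and $A(B^\sharp C)=A(BC)$, so that throughout every $A$-seminorm and $A$-numerical-radius computation the factors $B^\sharp B$, $C^\sharp C$ and $B^\sharp C$ may be replaced by $B^2$, $C^2$ and $BC$ respectively. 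I would then apply Theorem \ref{mai9} to the pair $(C,B)=(\Im_A(T),\Re_A(T))$.

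Expanding $(2B)^2=(T+T^\sharp)^2$ and $(2iC)^2=(T-T^\sharp)^2$ and combining, one finds, in the $A$-metric,
$$B^2+C^2=\tfrac12\big(T^\sharp T+TT^\sharp\big),\quad B^2-C^2=\tfrac12\big(T^2+(T^\sharp)^2\big),\quad BC=\tfrac{1}{4i}\big(T+T^\sharp\big)\big(T-T^\sharp\big).$$
Hence $\|C^\sharp C+B^\sharp B\|_A=\tfrac12\|T^\sharp T+TT^\sharp\|_A$, $\|C^\sharp C-B^\sharp B\|_A=\tfrac12\|T^2+(T^\sharp)^2\|_A$ and $\omega_A(B^\sharp C)=\tfrac14\omega_A\big((T^\sharp+T)(T-T^\sharp)\big)$. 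Inserting these into the bound of Theorem \ref{mai9} and collecting the constants $\tfrac{\sqrt2}{2}$, $\tfrac12$, $\tfrac14$ produces
$$\omega_A(T)\le\tfrac12\sqrt{\|T^\sharp T+TT^\sharp\|_A+\|T^2+(T^\sharp)^2\|_A+\tfrac12\,\omega_A\big((T^\sharp+T)(T-T^\sharp)\big)},$$
and the desired inequality \eqref{rr01} follows by the trivial relaxation $\tfrac12\omega_A(\cdot)\le\omega_A(\cdot)$ in the last term.

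For sharpness I would take $A=I$ and any nonzero selfadjoint $T$: then $\Im_A(T)=0$, the cross term $\omega_A((T^\sharp+T)(T-T^\sharp))$ vanishes, both $\|T^\sharp T+TT^\sharp\|_A$ and $\|T^2+(T^\sharp)^2\|_A$ equal $2\|T\|^2$ by Lemma \ref{pos2}, and the right-hand side collapses to $\|T\|=\omega(T)$, giving equality. The main obstacle is not any single estimate but the careful bookkeeping of the reduction: one must justify that $\Re_A(T)$ and $\Im_A(T)$ are genuinely $A$-selfadjoint (so that \eqref{aself1} and the replacement of $B^\sharp B$ by $B^2$ are legitimate) and track the rational constants, the only genuine loss being the harmless passage from $\tfrac12\omega_A$ to $\omega_A$ in the final term.
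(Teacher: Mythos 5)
Your overall route is the same as the paper's: decompose $T$ into its $A$-Cartesian parts, identify $\omega_A(T)$ with an $A$-joint numerical radius of that pair, and feed the pair into Theorem \ref{mai9}. Your reduction is in fact cleaner than the paper's (the paper applies Theorem \ref{mai9} to $[\Re_A(T)]^{\sharp}$, $[\Im_A(T)]^{\sharp}$ and uses $\omega_A(T)=\omega_A(T^\sharp)$ plus Lemma \ref{adma}, whereas you work directly with $\Re_A(T)$, $\Im_A(T)$ via $\langle T^\sharp x\mid x\rangle_A=\overline{\langle Tx\mid x\rangle_A}$ and the identity $AB^\sharp=AB$ for $A$-selfadjoint $B$; all of that algebra is correct). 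However, there is a genuine problem in the middle of your argument: you take the displayed inequality \eqref{derive1} of Theorem \ref{mai9} at face value, with the factor $\tfrac{\sqrt2}{2}$ outside the entire square root, so that after squaring the cross term carries a coefficient $\tfrac12$. That displayed statement is a misprint: what the proof of Theorem \ref{mai9} actually establishes is
\begin{equation*}
\omega_{A,\text{e}}^2(T,S)\leq \tfrac{1}{2}\Big(\|T^\sharp T+S^\sharp S\|_A+\|T^\sharp T-S^\sharp S\|_A\Big)+\omega_A(S^\sharp T),
\end{equation*}
i.e.\ with the cross term at coefficient $1$, and the stronger printed form is false (take $A=I$, $T=S=I$: the left side is $2$ while the printed bound gives $3/2$). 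Consequently your intermediate inequality
\begin{equation*}
\omega_A(T)\le\tfrac12\sqrt{\|T^\sharp T+TT^\sharp\|_A+\|T^2+(T^\sharp)^2\|_A+\tfrac12\,\omega_A\big((T^\sharp+T)(T-T^\sharp)\big)}
\end{equation*}
is also false: for $A=I$ and $T=(1+i)I$ one has $\omega_A(T)=\sqrt2$, while the right-hand side equals $\tfrac12\sqrt{4+0+2}=\tfrac{\sqrt6}{2}<\sqrt2$. So the final passage from $\tfrac12\omega_A$ to $\omega_A$ is not a ``harmless relaxation'' of a valid stronger bound; it is the step that silently converts a false statement into a true one.

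The repair is immediate and leaves your architecture intact: invoke the inequality as established in the proof of Theorem \ref{mai9} (cross term with coefficient $1$), and your computation of $B^2+C^2=\tfrac12(T^\sharp T+TT^\sharp)$, $B^2-C^2=\tfrac12(T^2+(T^\sharp)^2)$, $\omega_A(B^\sharp C)=\tfrac14\omega_A\big((T^\sharp+T)(T-T^\sharp)\big)$ then yields \eqref{rr01} exactly, with no relaxation needed; this is precisely how the paper's own proof proceeds (note the example $T=(1+i)I$, $A=I$ above attains equality in \eqref{rr01}, so no strengthening of the cross-term coefficient is possible). Your sharpness example ($A=I$, $T$ selfadjoint) is a legitimate equality instance, though it is the special case $A=I$ of the paper's example $T=S^\sharp$ with $S$ an arbitrary $A$-selfadjoint operator, which exhibits equality for every positive $A$.
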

\begin{proof}
Let $T\in \mathcal{B}_{A}(\mathcal{H})$. Clearly we have $T=\Re_A(T) + i\Im_A(T)$. This implies that $T^{\sharp}=[\Re_A(T)]^{\sharp} - i[\Im_A(T)]^{\sharp}$. Moreover, we see that
\begin{align}\label{newdog}
\omega_A^2(T^{\sharp})
& = \sup\left\{|\langle T^{\sharp}x\mid x\rangle_A|^2\,; \,\, x\in \mathbb{S}^A(0,1)\right\}\nonumber\\
& = \sup\left\{|\langle [\Re_A(T)]^{\sharp}x\mid x\rangle_A|^2+|\langle [\Im_A(T)]^{\sharp}x\mid x\rangle_A|^2\,; \,\,x\in \mathbb{S}^A(0,1)\right\}\nonumber\\
&=\omega_{A,\text{e}}^2\Big([\Re_A(T)]^{\sharp},[\Im_A(T)]^{\sharp}\Big).
\end{align}
Since $\omega_A(T)=\omega_A(T^{\sharp})$, then by using \eqref{newdog}) and applying \eqref{derive1} for $T=[\Re_A(T)]^{\sharp}$ and $S=[\Im_A(T)]^{\sharp}$, we observe that
\begin{align*}
&\omega_A^2(T)\\
&=\omega_{A,\text{e}}^2\Big([\Re_A(T)]^{\sharp},[\Im_A(T)]^{\sharp}\Big)\\
&\leq \frac{1}{2}\Big(\left\|([\Re_A(T)]^{\sharp})^\sharp [\Re_A(T)]^{\sharp}+([\Im_A(T)]^{\sharp})^\sharp [\Im_A(T)]^{\sharp}\right\|_A\\
&+\left\|([\Re_A(T)]^{\sharp})^\sharp [\Re_A(T)]^{\sharp}-([\Im_A(T)]^{\sharp})^\sharp [\Im_A(T)]^{\sharp}\right\|_A \Big)+\omega_A\big(([\Im_A(T)]^{\sharp})^\sharp [\Re_A(T)]^{\sharp}\big).
\end{align*}
Moreover, it is not difficult to see that $([\Re_A(T)]^{\sharp})^\sharp=[\Re_A(T)]^{\sharp}$ and $([\Im_A(T)]^{\sharp})^\sharp=[\Im_A(T)]^{\sharp}$. So, we infer that
\begin{align}\label{sbou3i}
\omega_A^2(T)
&\leq \frac{1}{2}\left(\left\|([\Re_A(T)]^{\sharp})^2+([\Im_A(T)]^{\sharp})^2\right\|_A+\left\|[\Re_A(T)]^{\sharp})^2-([\Im_A(T)]^{\sharp})^2\right\|_A \right)\nonumber\\
&\quad\quad\quad\quad+\omega_A\big([\Im_A(T)]^{\sharp}[\Re_A(T)]^{\sharp}\big)\nonumber\\
&= \frac{1}{2}\left(\left\|([\Re_A(T)]^{\sharp})^2+([\Im_A(T)]^{\sharp})^2\right\|_A+\left\|[\Re_A(T)]^{\sharp})^2-([\Im_A(T)]^{\sharp})^2\right\|_A \right)\nonumber\\
&\quad\quad\quad\quad+\omega_A\big([\Re_A(T)][\Im_A(T)]\big),
\end{align}
where the last equality follows since $\omega_A(X^{\sharp}) = \omega_A(X)$ for every $X\in\mathcal{B}_{A}(\mathcal{H})$. On the other hand, by making direct calculations, it can be checked that
\begin{align*}
\left([\Re_A(T)]^{\sharp}\right)^2-\left([\Im_A(T)]^{\sharp}\right)^2
= \frac{(T^{\sharp})^2 + [(T^{\sharp})^{\sharp}]^2}{2}
= \left(\frac{T^2 + (T^{\sharp})^2}{2}\right)^{\sharp},
\end{align*}
and
\begin{align*}
\left([\Re_A(T)]^{\sharp}\right)^2+\left([\Im_A(T)]^{\sharp}\right)^2
= \frac{(T^{\sharp})^{\sharp}T^{\sharp} + T^{\sharp}(T^{\sharp})^{\sharp}}{2}
= \left(\frac{TT^{\sharp} + T^{\sharp} T}{2}\right)^{\sharp}.
\end{align*}
Hence, by taking into consideration \eqref{sbou3i} we get
\begin{align*}
\omega_A(T) \le \frac{1}{4} \left[\left\|(T^\sharp T+TT^\sharp)^\sharp\right\|_A+\left\|(T^2+(T^\sharp)^2)^\sharp\right\|_A+ \omega_A\Big((T^\sharp+T)(T-T^\sharp)\Big)\right].
\end{align*}
This proves \eqref{rr01} since ${\|X^{\sharp}\|}_A = {\|X\|}_A$ for every $X\in\mathcal{B}_{A}(\mathcal{H})$. To show the sharpness of the inequality \eqref{rr01} we choose $T = S^\sharp$ with $S$ is any $A$-selfadjoint operator on $\mathcal{H}$. So, by Lemma \ref{adma}, $S^\sharp$ is $A$-selfadjoint and $(S^\sharp)^\sharp=S^\sharp$. Thus, we deduce that
$$\omega_A\Big(\left[(S^\sharp)^\sharp+S^\sharp\right]\left[S^\sharp-(S^\sharp)^\sharp)\right]\Big)=0.$$
Further, by taking into account Lemma \ref{adma}, we get
\begin{align*}
\frac{1}{2}\sqrt{ \|(S^\sharp)^\sharp S^\sharp+S^\sharp (S^\sharp)^\sharp\|_A+\left\|(S^\sharp)^2+[(S^\sharp)^\sharp]^2\right\|_A}
& = \frac{1}{2}\sqrt{ 2\|(S^\sharp)^2\|_A+2\left\|(S^\sharp)^2\right\|_A}\\
 &=\sqrt{\|(S^\sharp)^2\|_A}\\
  &=\|S^\sharp\|_A,
\end{align*}
where the last equality follows from Lemma \ref{pos2} since $S^\sharp$ is $A$-selfadjoint. Thus, by taking into consideration \eqref{aself1}, we deduce that both sides of \eqref{rr01} become $\|S\|_A$.
\end{proof}
%%%%%%%%%%%%%%%%%%%%%%%%%%%%%%%%%%%%%%%%%%%%%%%%%
\begin{corollary}\label{nor1}
Let $T\in \mathcal{B}_{A}(\mathcal{H})$. Then,
\begin{align}\label{lll1}
\omega_A(T) \le \frac{1}{2}\sqrt{ \|T^\sharp T+TT^\sharp\|_A+\|T^\sharp T-TT^\sharp\|_A + \frac{1}{2}\omega_A(T^2) }.
\end{align}
Moreover, the inequality \eqref{lll1} is sharp.
\end{corollary}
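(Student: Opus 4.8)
The plan is to obtain \eqref{lll1} from Theorem \ref{mai9} by applying \eqref{derive1} not to the Cartesian parts of $T$ (as in Corollary \ref{twil}) but to the pair $(T, T^{\sharp})$ itself, since this is exactly the input that turns the quantities $\|(\cdot)^{2}\pm(\cdot)^{2}\|_A$ into the products $T^{\sharp}T$ and $TT^{\sharp}$. The first step is the identity $\omega_{A,\text{e}}(T, T^{\sharp}) = \sqrt{2}\,\omega_A(T)$. To see this I would note that, for every $x$, the relation $AT^{\sharp} = T^{*}A$ together with $A^{*}=A$ gives $\langle T^{\sharp}x\mid x\rangle_A = \overline{\langle Tx\mid x\rangle_A}$, so $|\langle T^{\sharp}x\mid x\rangle_A| = |\langle Tx\mid x\rangle_A|$ and hence $|\langle Tx\mid x\rangle_A|^{2} + |\langle T^{\sharp}x\mid x\rangle_A|^{2} = 2|\langle Tx\mid x\rangle_A|^{2}$; taking the supremum over $\mathbb{S}^A(0,1)$ yields the claim.

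Next I would feed $T$ and $S = T^{\sharp}$ into \eqref{derive1}. The three ingredients produced are $\|T^{\sharp}T + (T^{\sharp})^{\sharp}T^{\sharp}\|_A$, $\|T^{\sharp}T - (T^{\sharp})^{\sharp}T^{\sharp}\|_A$ and $\omega_A\big((T^{\sharp})^{\sharp}T\big)$, and each has to be rewritten in the form occurring in \eqref{lll1}. For the first two I would use that, since $(T^{\sharp})^{\sharp}$ is the $A$-adjoint of $T^{\sharp}$, one has $\langle (T^{\sharp})^{\sharp}T^{\sharp}x\mid x\rangle_A = \|T^{\sharp}x\|_A^{2} = \langle TT^{\sharp}x\mid x\rangle_A$ for all $x$. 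Thus the $A$-selfadjoint operators $T^{\sharp}T \pm (T^{\sharp})^{\sharp}T^{\sharp}$ and $T^{\sharp}T \pm TT^{\sharp}$ share the same $A$-quadratic form, and \eqref{aself1} forces $\|T^{\sharp}T \pm (T^{\sharp})^{\sharp}T^{\sharp}\|_A = \|T^{\sharp}T \pm TT^{\sharp}\|_A$.

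For the cross term I would again use the $A$-adjoint property of $(T^{\sharp})^{\sharp}$: for every $x$ one has $\langle (T^{\sharp})^{\sharp}Tx\mid x\rangle_A = \langle Tx\mid T^{\sharp}x\rangle_A = \langle T^{2}x\mid x\rangle_A$, so that $\omega_A\big((T^{\sharp})^{\sharp}T\big) = \omega_A(T^{2})$. Substituting the three simplified ingredients into \eqref{derive1} and dividing through by the factor $\sqrt{2}$ coming from $\omega_{A,\text{e}}(T, T^{\sharp}) = \sqrt{2}\,\omega_A(T)$ is what produces \eqref{lll1}. I expect the one genuinely delicate point to be the scalar bookkeeping on this third term: the coefficient that ends up multiplying $\omega_A(T^{2})$ under the radical is dictated entirely by how the factor $\tfrac{\sqrt{2}}{2}$ in front of the root and the $\sqrt{2}$ from the reduction interact with the position of $\omega_A(S^{\sharp}T)$ inside \eqref{derive1}, so the step that must be carried out most carefully is to keep \eqref{derive1} in its sharp form throughout and not enlarge the cross term prematurely, as this is what controls the exact constant in front of $\omega_A(T^{2})$.

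Finally, for the sharpness assertion I would reuse the example underlying Corollary \ref{twil}: take $T = S^{\sharp}$ with $S$ an arbitrary $A$-selfadjoint operator, so that by Lemma \ref{adma} $T$ is $A$-selfadjoint with $T^{\sharp} = T$. On this example the three ingredients collapse to $\|T^{\sharp}T - TT^{\sharp}\|_A = 0$, $\|T^{\sharp}T + TT^{\sharp}\|_A = 2\|T^{2}\|_A = 2\|T\|_A^{2}$ by Lemma \ref{pos2}, and $\omega_A(T^{2}) = \|T^{2}\|_A = \|T\|_A^{2}$, while $\omega_A(T) = \|T\|_A$ by \eqref{aself1}. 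The equality case then amounts to checking that the expression under the radical in \eqref{lll1} equals $\big(2\omega_A(T)\big)^{2} = 4\|T\|_A^{2}$ on this example; this identity is precisely what fixes the admissible coefficient attached to $\omega_A(T^{2})$ inside the square root, and it is the constant I would verify most carefully before claiming equality.
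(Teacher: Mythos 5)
Your route is essentially the paper's own: the paper proves this corollary in one line by substituting the pair $\big((T^\sharp)^\sharp,\,T^\sharp\big)$ into Theorem \ref{mai9}, and your pair $(T,T^\sharp)$ is an equivalent choice of input, since $\omega_{A,\text{e}}(T,T^\sharp)=\sqrt{2}\,\omega_A(T)$ holds exactly as you argue, and your quadratic-form identities $\langle (T^\sharp)^\sharp T^\sharp x\mid x\rangle_A=\|T^\sharp x\|_A^2=\langle TT^\sharp x\mid x\rangle_A$ and $\langle (T^\sharp)^\sharp Tx\mid x\rangle_A=\langle T^2x\mid x\rangle_A$ are correct, with \eqref{aself1} legitimately upgrading equality of $A$-quadratic forms to equality of $A$-seminorms for the $A$-selfadjoint operators involved. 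Your sharpness example ($T=S^\sharp$ with $S$ $A$-selfadjoint, via Lemma \ref{adma}) is a special case of the paper's ($A$-normal $T$, via \eqref{10k}), and works.

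The ``delicate point'' you flagged is, however, exactly where the printed statement and the argument part company, and you should resolve it rather than leave it open. What the proof of Theorem \ref{mai9} actually establishes, before taking the supremum, is
\begin{align*}
\omega_{A,\text{e}}^2(T,S)\le \tfrac{1}{2}\Big(\|T^\sharp T+S^\sharp S\|_A+\|T^\sharp T-S^\sharp S\|_A\Big)+\omega_A(S^\sharp T),
\end{align*}
i.e.\ the cross term sits \emph{outside} the factor $\tfrac12$; the displayed form of \eqref{derive1}, which pulls $\tfrac{\sqrt2}{2}$ in front of the whole radical, silently halves the cross term and is already refuted by $A=I$, $T=S=I$ (it claims $\sqrt2\le\sqrt{6}/2$). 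Feeding your pair into the proof-supported version gives $2\omega_A^2(T)\le\tfrac12\big(\|T^\sharp T+TT^\sharp\|_A+\|T^\sharp T-TT^\sharp\|_A\big)+\omega_A(T^2)$, that is,
\begin{align*}
\omega_A(T)\le\frac12\sqrt{\|T^\sharp T+TT^\sharp\|_A+\|T^\sharp T-TT^\sharp\|_A+2\,\omega_A(T^2)},
\end{align*}
with coefficient $2$, not $\tfrac12$, on $\omega_A(T^2)$. Your own equality check confirms that $2$ is the only admissible constant: on your example the radicand must equal $4\|T\|_A^2$, forcing $2+c=4$. As printed, \eqref{lll1} with $\tfrac12\omega_A(T^2)$ is false (for $A=I$, $T=I$ it asserts $1\le\tfrac12\sqrt{5/2}$) and is incompatible with the paper's own sharpness claim for $A$-normal operators, so the printed coefficient is a typo; your proof, carried out with the bookkeeping you promised, establishes the corrected statement, which is the best this method yields.
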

\begin{proof}
By replacing $T$ and $S$ by $(T^\sharp)^\sharp$ and $T^\sharp$ respectively and using similar techniques as above we get \eqref{lll1}. To show the sharpness of the inequality \eqref{lll1} we assume that $T$ is any $A$-normal operator on $\mathcal{H}$. By \cite{feki01}, we have
\begin{equation}\label{10k}
\omega_A(T^2)=\omega_A(T)^2=\|T\|_A^2.
\end{equation}
So, it be observed that that both sides of \eqref{lll1} become $\|T\|_A$.
\end{proof}
%%%%%%%%%%%%%%%%%%%%%%%%%%%%%%%%%%%%%%%%%%%%%%%%%
The second inequality in Theorem \ref{mai9} can be improved as follows.
\begin{theorem}\label{mai10}
Let $T\in \mathcal{B}_A(\mathcal{H})$. Then
\begin{align}\label{sh1}
\omega_{A,\text{e}}(T,S)
 &\leq \sqrt{\max\left(\|T\|_A^2+\|S\|_A^2 \right)+\omega_A(S^\sharp T)}.
\end{align}
Moreover, the inequality \eqref{sh1} is sharp.
\end{theorem}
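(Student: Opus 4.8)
The plan is to feed Lemma \ref{leme1} with exactly the same substitution used in Theorem \ref{mai9}, but to retain the quantity $\max\{\|Tx\|_A^2,\|Sx\|_A^2\}$ instead of expanding it through the identity \eqref{r}. That expansion, followed by the triangle inequality, is precisely what introduces the spurious factor $\sqrt{2}$ into the second estimate of Theorem \ref{mai9}; avoiding it therefore yields the sharper bound \eqref{sh1} directly.

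Concretely, I would fix $x\in\mathbb{S}^A(0,1)$ and apply \eqref{n03} with $a=x$, $b=Tx$ and $c=Sx$. Since $\|x\|_A=1$, this gives
\begin{align*}
|\langle Tx\mid x\rangle_A|^2+|\langle Sx\mid x\rangle_A|^2\leq \max\{\|Tx\|_A^2,\|Sx\|_A^2\}+|\langle Tx\mid Sx\rangle_A|.
\end{align*}
Next I would estimate each piece by operator quantities. Because $x\in\mathbb{S}^A(0,1)$, \eqref{semii} gives $\|Tx\|_A\leq\|T\|_A$ and $\|Sx\|_A\leq\|S\|_A$, hence $\max\{\|Tx\|_A^2,\|Sx\|_A^2\}\leq\max\{\|T\|_A^2,\|S\|_A^2\}$. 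Moreover, since $T,S\in\mathcal{B}_A(\mathcal{H})$ one has $\langle Tx\mid Sx\rangle_A=\langle S^\sharp Tx\mid x\rangle_A$, so that $|\langle Tx\mid Sx\rangle_A|\leq\omega_A(S^\sharp T)$ (note $S^\sharp T\in\mathcal{B}_A(\mathcal{H})$, so this $A$-numerical radius is finite). Combining these yields
\begin{align*}
|\langle Tx\mid x\rangle_A|^2+|\langle Sx\mid x\rangle_A|^2\leq \max\{\|T\|_A^2,\|S\|_A^2\}+\omega_A(S^\sharp T),
\end{align*}
and taking the supremum over $x\in\mathbb{S}^A(0,1)$ on the left gives \eqref{sh1} at once.

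For the sharpness I would exhibit an equality case. Taking $S=T$ with $T$ any nonzero $A$-selfadjoint operator, the left-hand side becomes $\omega_{A,\text{e}}(T,T)=\sqrt{2}\,\omega_A(T)=\sqrt{2}\,\|T\|_A$ by \eqref{aself1}, while on the right-hand side $\max\{\|T\|_A^2,\|T\|_A^2\}=\|T\|_A^2$ and $\omega_A(T^\sharp T)=\|T^\sharp T\|_A=\|T\|_A^2$ (using \eqref{aself1} for the $A$-positive operator $T^\sharp T$ and then \eqref{diez}), so the right-hand side equals $\sqrt{\|T\|_A^2+\|T\|_A^2}=\sqrt{2}\,\|T\|_A$ as well. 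The degenerate choice $S=0$ with $T$ $A$-selfadjoint works equally well and recovers $\omega_A(T)=\|T\|_A$.

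There is essentially no analytic obstacle in this argument: it is a one-line application of Lemma \ref{leme1} together with the elementary monotonicity of the seminorm and the definition of $\omega_A$. The only point requiring care is the conceptual one, namely recognising that the factor $\sqrt{2}$ in Theorem \ref{mai9} is an artefact of the max-decomposition \eqref{r} and not of the underlying inequality \eqref{n03}, so that working straight from \eqref{n03} is both legitimate and strictly better; checking the sharpness example is then routine.
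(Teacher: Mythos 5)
Your proof is correct and follows essentially the same route as the paper: the identical application of Lemma \ref{leme1} with $a=x$, $b=Tx$, $c=Sx$, followed by the same bounds $\max\{\|Tx\|_A^2,\|Sx\|_A^2\}\leq\max\{\|T\|_A^2,\|S\|_A^2\}$ and $|\langle S^\sharp Tx\mid x\rangle_A|\leq\omega_A(S^\sharp T)$, and then the supremum. The only cosmetic difference is the sharpness witness: you take $T=S$ $A$-selfadjoint and use \eqref{aself1} together with \eqref{diez}, while the paper takes both operators equal to $X^\sharp$ for an $A$-selfadjoint $X$ and invokes Lemma \ref{pos2}; both choices give the common value $\sqrt{2}\,\|T\|_A$ on the two sides.
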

\begin{proof}
Let $x\in \mathcal{H}$ be such that $\|x\|_A=1$. By letting $a=x$, $b=Tx$ and $c=Sx$ in Lemma \ref{leme1} we get
\begin{align*}
|\langle Tx\mid x\rangle_A|^2+|\langle Sx\mid x\rangle_A|^2
&\leq \max\Big(\|Tx\|_A^2,\|Sx\|_A^2\Big)+|\langle Tx\mid Sx\rangle_A|\\
&\leq \max\Big(\|T\|_A^2,\|S\|_A^2\Big)+|\langle S^\sharp Tx\mid x\rangle_A|\\
&\leq \max\Big(\|T\|_A^2,\|S\|_A^2\Big)+\omega_A(S^\sharp T).
\end{align*}
Thus, by taking the supremum over all $x\in \mathbb{S}^A(0,1)$ in above inequality, we get the desired result. Now, to prove the sharpness of the inequality \eqref{sh1} we choose $T = S$, where $T$ is an $A$-selfadjoint operator. Then, by using Lemma \ref{adma}, $T^\sharp$ is $A$-selfadjoint and $(T^\sharp)^\sharp=T^\sharp$. So, we see that
\begin{align*}
\max\left(\|T^\sharp\|_A^2+\|T^\sharp\|_A^2 \right)+\omega_A\big((T^\sharp)^\sharp T^\sharp\big)
& =\|T^\sharp\|_A^2+\omega_A\big((T^\sharp)^2\big).
\end{align*}
Since $T^\sharp$ is $A$-selfadjoint, then $(T^\sharp)^2\geq_A0$. So, by \eqref{aself1}, $\omega_A\big((T^\sharp)^2\big)=\|(T^\sharp)^2\|_A$. This yields, through Lemma \ref{pos2}, that $\omega_A\big((T^\sharp)^2\big)=\|T^\sharp\|_A^2$. Thus,
$$\max\left(\|T^\sharp\|_A^2+\|T^\sharp\|_A^2 \right)+\omega_A\big((T^\sharp)^\sharp T^\sharp\big)=2\|T^\sharp\|_A^2.$$
On the other hand,
$$\omega_{A,\text{e}}^2(T^\sharp,T^\sharp)=2\omega_{A}^2(T^\sharp)=2\|T^\sharp\|_A^2.$$
\end{proof}
Now, we state the following corollary.
\begin{corollary}
Let $T\in \mathcal{B}_{A}(\mathcal{H})$. Then,
\begin{align}\label{sharpmai}
\omega_A(T) \le \frac{\sqrt{2}}{2}\sqrt{ {\|T\|_A^2 + \omega_A\big( {T^2} \big)} }.
\end{align}
The constant $\frac{\sqrt{2}}{2}$ is best possible in the sense that it cannot be replaced by a larger constant.
\end{corollary}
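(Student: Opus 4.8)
The plan is to bypass the Cartesian-decomposition route (applying Theorem~\ref{mai10} to the pair $([\Re_A(T)]^{\sharp},[\Im_A(T)]^{\sharp})$, in analogy with Corollary~\ref{twil}): the $\max$ appearing there is too wasteful to recover the factor $\tfrac12$, so a rotation argument is needed instead. First I would establish the identity
\[
\omega_A(T)=\sup_{\theta\in\mathbb{R}}\big\|\Re_A(e^{i\theta}T)\big\|_A .
\]
For this, note that $\Re_A(e^{i\theta}T)=\tfrac12\big(e^{i\theta}T+e^{-i\theta}T^{\sharp}\big)$ is $A$-selfadjoint, so $\langle\Re_A(e^{i\theta}T)x\mid x\rangle_A$ is real and, using $\langle T^{\sharp}x\mid x\rangle_A=\overline{\langle Tx\mid x\rangle_A}$, equals $\Re\big(e^{i\theta}\langle Tx\mid x\rangle_A\big)$. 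Hence by \eqref{aself1},
\[
\big\|\Re_A(e^{i\theta}T)\big\|_A=\omega_A\big(\Re_A(e^{i\theta}T)\big)=\sup_{\|x\|_A=1}\big|\Re\big(e^{i\theta}\langle Tx\mid x\rangle_A\big)\big|,
\]
and taking the supremum over $\theta$, together with $\sup_{\theta}|\Re(e^{i\theta}z)|=|z|$, yields the identity.

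Next I would run the self-adjoint-type computation. Fixing $\theta$ and using that $\Re_A(e^{i\theta}T)$ is $A$-selfadjoint, Lemma~\ref{pos2} with $n=2$ gives $\|\Re_A(e^{i\theta}T)\|_A^2=\big\|[\Re_A(e^{i\theta}T)]^2\big\|_A$. Expanding the square and using $(T^{\sharp})^2=(T^2)^{\sharp}$, one finds
\[
[\Re_A(e^{i\theta}T)]^2=\tfrac12\,\Re_A\big(e^{2i\theta}T^2\big)+\tfrac14\big(T^{\sharp}T+TT^{\sharp}\big).
\]
Applying the triangle inequality for $\|\cdot\|_A$, bounding $\|\Re_A(e^{2i\theta}T^2)\|_A=\omega_A(\Re_A(e^{2i\theta}T^2))\le\omega_A(T^2)$ via \eqref{aself1}, and $\|T^{\sharp}T+TT^{\sharp}\|_A\le 2\|T\|_A^2$ via \eqref{diez}, I obtain $\|\Re_A(e^{i\theta}T)\|_A^2\le\tfrac12\omega_A(T^2)+\tfrac12\|T\|_A^2$ for every $\theta$. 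Taking the supremum over $\theta$ and invoking the identity above gives $\omega_A^2(T)\le\tfrac12\big(\|T\|_A^2+\omega_A(T^2)\big)$, which is exactly \eqref{sharpmai}.

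For the sharpness claim I would take any nonzero $A$-selfadjoint $T$. Then $\omega_A(T)=\|T\|_A$ by \eqref{aself1}, while $AT^2=T^{*}AT\ge 0$ shows $T^2\ge_A 0$, so $\omega_A(T^2)=\|T^2\|_A=\|T\|_A^2$ by \eqref{aself1} and Lemma~\ref{pos2}. Consequently the right-hand side of \eqref{sharpmai} equals $\tfrac{\sqrt2}{2}\sqrt{2\|T\|_A^2}=\|T\|_A=\omega_A(T)$, so equality is attained and the constant $\tfrac{\sqrt2}{2}$ is best possible. The main obstacle is conceptual rather than computational: one must recognize that the $\max$-based bounds of this section lose the crucial factor and replace them by the rotation identity; the remaining delicate points are the conjugate-linearity identity $(e^{2i\theta}T^2)^{\sharp}=e^{-2i\theta}(T^{\sharp})^2$ and the $A$-selfadjointness of $\Re_A(e^{i\theta}T)$ that make the clean expansion legitimate.
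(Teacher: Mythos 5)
Your proof is correct, but it takes a genuinely different route from the paper's, and your stated reason for avoiding the paper's route is mistaken. The paper \emph{does} use the max-based bound: it applies Theorem~\ref{mai10} to the pair $(T^{\sharp},T)$ (not to the Cartesian pair), where the $\max$ costs nothing because $\|T^{\sharp}\|_A=\|T\|_A$ by \eqref{diez}; combined with $\langle T^{\sharp}x\mid x\rangle_A=\overline{\langle Tx\mid x\rangle_A}$, which gives $\omega^2_{A,\text{e}}(T^{\sharp},T)=2\,\omega^2_A(T)$, and with $\omega_A\big((T^{\sharp})^{2}\big)=\omega_A\big((T^{2})^{\sharp}\big)=\omega_A(T^{2})$, the inequality $2\omega_A^2(T)\le\|T\|_A^2+\omega_A(T^2)$ drops out in two lines. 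So the $\max$ is only ``wasteful'' for pairs with unequal seminorms. Your alternative --- the rotation identity $\omega_A(T)=\sup_{\theta}\|\Re_A(e^{i\theta}T)\|_A$, then Lemma~\ref{pos2} and the expansion $[\Re_A(e^{i\theta}T)]^2=\tfrac12\Re_A(e^{2i\theta}T^{2})+\tfrac14(T^{\sharp}T+TT^{\sharp})$ --- is a sound, self-contained $A$-analogue of the classical Dragomir argument: each step (conjugate-linearity of $X\mapsto X^{\sharp}$, $A$-selfadjointness of $\Re_A(e^{i\theta}T)$, \eqref{aself1}, \eqref{diez}) is justified. What it buys is independence from the joint-numerical-radius machinery of this section, plus an identity of independent interest; what it loses is the paper's point that \eqref{sharpmai} is a direct corollary of Theorem~\ref{mai10}. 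One small repair is needed in your sharpness step: you must require $AT\neq 0$, not merely $T\neq 0$, since for non-injective $A$ a nonzero $A$-selfadjoint $T$ can satisfy $AT=0$, in which case both sides of \eqref{sharpmai} vanish and no conclusion about the constant follows (such $T$ with $AT\neq0$ always exists, e.g.\ $T=I$). Granting that, your verification of equality via $AT^{2}=T^{*}AT\geq 0$ is valid, and differs from the paper's, which takes $T$ to be $A$-normal with $AT\neq 0$ and invokes \eqref{10k}.
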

\begin{proof}
Let $T\in \mathcal{B}_{A}(\mathcal{H})$. By replacing $T$ and $S$ in Theorem \ref{mai10} by $T^\sharp$ and $T$ respectively,  we get
\begin{align*}
2\omega_A^2(T)
&\le \|T\|_A^2 + \omega_A\big( (T^\sharp)^2 \big)\\
 &=\|T\|_A^2 + \omega_A\big( (T^2)^\sharp \big)\\
  &=\|T\|_A^2 + \omega_A\big( T^2 \big)
\end{align*}
This proves the inequality \eqref{sharpmai}. Now, suppose that \eqref{sharpmai} holds with some constant $C > 0$. So, by choosing $T$ any $A$-normal operator (with $AT\neq 0$) and using \eqref{10k}, we easily get $\sqrt{2}C\geq 1$. This finishes the proof of the corollary.
\end{proof}
\begin{remark}
By using \eqref{refine1} together with \eqref{crucial0}, we see that
\begin{align*}
\frac{\sqrt{2}}{2}\sqrt{ {\|T\|_A^2 + \omega_A\big( {T^2} \big)} }\leq \|T\|_A.
\end{align*}
So, the inequality \eqref{sharpmai} refines the second inequality in \eqref{refine1}.
\end{remark}
The following corollary is also an immediate consequence of Theorem \ref{mai10} and its proof is similar to that given in Corollary \ref{twil} and hence omitted.
\begin{corollary}
Let $T\in \mathcal{B}_{A}(\mathcal{H})$. Then,
\begin{align}\label{5ra2020}
\omega_A(T) \le \frac{1}{2}\sqrt{ \max\left\{\|T+T^\sharp\|_A^2,\|T-T^\sharp\|_A^2 \right\}+ \omega_A\Big((T^\sharp+T)(T-T^\sharp)\Big) }.
\end{align}
Moreover, the inequality \eqref{5ra2020} is sharp.
\end{corollary}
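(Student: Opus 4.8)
The plan is to run the Cartesian-decomposition argument of Corollary~\ref{twil} almost verbatim, but to feed in the sharp estimate \eqref{sh1} of Theorem~\ref{mai10} in place of \eqref{derive1}. Writing $T=\Re_A(T)+i\Im_A(T)$ and passing to $A$-adjoints, the identity \eqref{newdog} already derived in the proof of Corollary~\ref{twil} gives
\[
\omega_A^2(T)=\omega_A^2(T^{\sharp})=\omega_{A,\text{e}}^2\big([\Re_A(T)]^{\sharp},[\Im_A(T)]^{\sharp}\big).
\]
Applying \eqref{sh1} with $T\leftarrow[\Re_A(T)]^{\sharp}$ and $S\leftarrow[\Im_A(T)]^{\sharp}$ then yields
\[
\omega_A^2(T)\leq\max\Big\{\|[\Re_A(T)]^{\sharp}\|_A^2,\,\|[\Im_A(T)]^{\sharp}\|_A^2\Big\}+\omega_A\Big(([\Im_A(T)]^{\sharp})^{\sharp}\,[\Re_A(T)]^{\sharp}\Big).
\]

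Next I would simplify the right-hand side exactly as in Corollary~\ref{twil}. For the two norms, the identity $\|X^{\sharp}\|_A=\|X\|_A$ together with $\Re_A(T)=\tfrac12(T+T^{\sharp})$ and $\Im_A(T)=\tfrac{1}{2i}(T-T^{\sharp})$ gives $\|[\Re_A(T)]^{\sharp}\|_A^2=\tfrac14\|T+T^{\sharp}\|_A^2$ and $\|[\Im_A(T)]^{\sharp}\|_A^2=\tfrac14\|T-T^{\sharp}\|_A^2$, so the maximum contributes $\tfrac14\max\{\|T+T^{\sharp}\|_A^2,\|T-T^{\sharp}\|_A^2\}$. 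For the cross term, since $\Re_A(T)$ and $\Im_A(T)$ are $A$-selfadjoint, Lemma~\ref{adma} gives $([\Im_A(T)]^{\sharp})^{\sharp}=[\Im_A(T)]^{\sharp}$; combining this with the anti-multiplicativity $(XY)^{\sharp}=Y^{\sharp}X^{\sharp}$ shows that $([\Im_A(T)]^{\sharp})^{\sharp}[\Re_A(T)]^{\sharp}=(\Re_A(T)\Im_A(T))^{\sharp}$. Using $\omega_A(X^{\sharp})=\omega_A(X)$ and $\Re_A(T)\Im_A(T)=\tfrac{1}{4i}(T+T^{\sharp})(T-T^{\sharp})$, this term reduces to $\tfrac14\omega_A\big((T^{\sharp}+T)(T-T^{\sharp})\big)$. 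Collecting the two contributions and taking square roots produces \eqref{5ra2020}.

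For the sharpness I would take the same test operator as in Corollary~\ref{twil}, namely $T=S^{\sharp}$ with $S$ any $A$-selfadjoint operator. By Lemma~\ref{adma}, $S^{\sharp}$ is $A$-selfadjoint and $(S^{\sharp})^{\sharp}=S^{\sharp}$, so $T^{\sharp}=T$ and hence $T-T^{\sharp}=0$. Consequently $\|T-T^{\sharp}\|_A=0$ and the cross term $\omega_A\big((T^{\sharp}+T)(T-T^{\sharp})\big)$ vanishes, so the right-hand side of \eqref{5ra2020} collapses to $\tfrac12\|T+T^{\sharp}\|_A=\|T\|_A=\|S\|_A$. Since $T$ is $A$-selfadjoint, \eqref{aself1} gives $\omega_A(T)=\|T\|_A=\|S\|_A$, so both sides coincide.

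I expect the only delicate step to be the cross term: one must keep $([\Im_A(T)]^{\sharp})^{\sharp}[\Re_A(T)]^{\sharp}$ in the correct order (it is $S^{\sharp}T$ with $S=[\Im_A(T)]^{\sharp}$ and $T=[\Re_A(T)]^{\sharp}$, not the reverse), invoke Lemma~\ref{adma} to collapse the double $\sharp$, and correctly absorb the scalar $\tfrac{1}{4i}$ when converting $\Re_A(T)\Im_A(T)$ into $(T+T^{\sharp})(T-T^{\sharp})$. Everything else is a routine transcription of the computation behind \eqref{rr01}.
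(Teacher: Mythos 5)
Your proposal is correct and is exactly the argument the paper intends: the paper omits the proof, stating only that it follows from Theorem \ref{mai10} by the same Cartesian-decomposition technique as Corollary \ref{twil}, which is precisely what you carry out (including the correct handling of the cross term $([\Im_A(T)]^{\sharp})^{\sharp}[\Re_A(T)]^{\sharp}=(\Re_A(T)\Im_A(T))^{\sharp}$ and the scalar $\tfrac{1}{4i}$). Your sharpness example $T=S^{\sharp}$ with $S$ $A$-selfadjoint likewise matches the paper's template from Corollary \ref{twil}, so nothing is missing.
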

The following corollary is an immediate consequence of Theorem \ref{mai10} and provides an upper bound for the $A$-Davis-Wielandt radius of operators in $ \mathcal{B}_A(\mathcal{H})$. The obtained result generalizes and improves \cite[Theorem 2.13]{zamanicheb2020}.
\begin{corollary}\label{th.2.11}
Let $T\in \mathcal{B}_A(\mathcal{H})$. Then,
\begin{align*}
{d\omega_A}(T) \leq \sqrt{\max\{\|T\|_A^2, \|T\|_A^4\} + \omega_A(T^\sharp T^2)}.
\end{align*}
\end{corollary}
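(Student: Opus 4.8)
The plan is to obtain this bound as a direct specialization of Theorem~\ref{mai10}, in exactly the same spirit in which Corollary~\ref{cor01} was deduced from Theorem~\ref{th.2.9}. The starting point is the identity \eqref{davis}, which for $T\in\mathcal{B}_A(\mathcal{H})$ reads $d\omega_A(T)=\omega_{A,\text{e}}(T,T^\sharp T)$. This rewrites the $A$-Davis--Wielandt radius as the $A$-joint numerical radius of the pair $(T,T^\sharp T)$, putting it into precisely the form to which Theorem~\ref{mai10} applies.

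Next I would apply the inequality \eqref{sh1} with the substitution $S=T^\sharp T$, and then simplify the two quantities appearing on the right-hand side. For the norm term, recall from \eqref{diez} that $\|T^\sharp T\|_A=\|T\|_A^2$, so that $\|T^\sharp T\|_A^2=\|T\|_A^4$ and hence $\max\{\|T\|_A^2,\|T^\sharp T\|_A^2\}=\max\{\|T\|_A^2,\|T\|_A^4\}$. For the cross term, I would note that $T^\sharp T$ is $A$-selfadjoint with $\mathcal{R}(T^\sharp T)\subseteq\overline{\mathcal{R}(A)}$, so Lemma~\ref{l01} gives $(T^\sharp T)^\sharp=T^\sharp T$; consequently $(T^\sharp T)^\sharp T=T^\sharp T^2$, and therefore $\omega_A\big((T^\sharp T)^\sharp T\big)=\omega_A(T^\sharp T^2)$.

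Assembling these observations, Theorem~\ref{mai10} yields
\[
d\omega_A(T)=\omega_{A,\text{e}}(T,T^\sharp T)\le\sqrt{\max\{\|T\|_A^2,\|T\|_A^4\}+\omega_A(T^\sharp T^2)},
\]
which is the asserted inequality.

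Since every step is either a substitution or an appeal to a result already established, I do not expect any genuine obstacle here; the corollary is truly an immediate consequence. The only points requiring slight care are to read the maximum in \eqref{sh1} correctly as $\max\{\|T\|_A^2,\|S\|_A^2\}$ (as its proof makes explicit) rather than as the maximum of a single sum, and to invoke the identity $(T^\sharp T)^\sharp=T^\sharp T$ via Lemma~\ref{l01} before forming the product with $T$ that produces the term $T^\sharp T^2$.
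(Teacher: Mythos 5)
Your proof is correct and is precisely the argument the paper intends: the corollary is stated there as an immediate consequence of Theorem~\ref{mai10}, obtained exactly as you do by substituting $S=T^\sharp T$ into \eqref{sh1}, invoking \eqref{davis} for $d\omega_A(T)=\omega_{A,\text{e}}(T,T^\sharp T)$, \eqref{diez} for $\|T^\sharp T\|_A^2=\|T\|_A^4$, and Lemma~\ref{l01} for $(T^\sharp T)^\sharp=T^\sharp T$. Your caution about reading the maximum in \eqref{sh1} as $\max\{\|T\|_A^2,\|S\|_A^2\}$ is also well placed, since the theorem's displayed statement contains a typo that its proof resolves in exactly that way.
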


The following lemma is useful in proving our two next results.
\begin{lemma}\label{le.2.13}
For every $a, b, c\in \mathcal{H}$, we have
\begin{align*}
|\langle a\mid b\rangle_A|^2 + |\langle a\mid c\rangle_A|^2
\leq \|a\|_A\max\left\{|\langle a\mid b\rangle_A|, |\langle a\mid c\rangle_A|\right\}
\sqrt{\|b\|_A^2+ \|c\|_A^2 + 2|\langle b\mid c\rangle_A|}.
\end{align*}
\end{lemma}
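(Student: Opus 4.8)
The plan is to follow the same scheme as in the proof of \cref{leme1}, the only difference being how the intermediate quadratic form is estimated so that a factor of the form $\max\{\cdot,\cdot\}$ can be pulled out. First I would introduce the auxiliary vector
\[
w := \langle a\mid b\rangle_A\, b + \langle a\mid c\rangle_A\, c
\]
and record the elementary identity
\[
\langle a\mid w\rangle_A = |\langle a\mid b\rangle_A|^2 + |\langle a\mid c\rangle_A|^2,
\]
which follows immediately from the conjugate-linearity of $\langle\cdot\mid\cdot\rangle_A$ in its second argument. Since the right-hand side is real and nonnegative, the Cauchy--Schwarz inequality for the semi-inner product $\langle\cdot\mid\cdot\rangle_A$ yields
\[
|\langle a\mid b\rangle_A|^2 + |\langle a\mid c\rangle_A|^2 = \langle a\mid w\rangle_A \leq \|a\|_A\,\|w\|_A.
\]

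Next I would expand the seminorm of $w$, exactly as in \eqref{n02}, to obtain
\[
\|w\|_A^2 = |\langle a\mid b\rangle_A|^2\|b\|_A^2+|\langle a\mid c\rangle_A|^2\|c\|_A^2+2\Re\big(\langle a\mid b\rangle_A \langle c\mid a\rangle_A \langle b\mid c\rangle_A\big).
\]
The two diagonal terms are bounded by $\max\{|\langle a\mid b\rangle_A|^2,|\langle a\mid c\rangle_A|^2\}\big(\|b\|_A^2+\|c\|_A^2\big)$, while the cross term is controlled using $|\langle c\mid a\rangle_A|=|\langle a\mid c\rangle_A|$ together with the trivial bound $|\langle a\mid b\rangle_A|\,|\langle a\mid c\rangle_A|\leq \max\{|\langle a\mid b\rangle_A|^2,|\langle a\mid c\rangle_A|^2\}$. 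Combining these gives
\[
\|w\|_A^2 \leq \max\{|\langle a\mid b\rangle_A|^2,|\langle a\mid c\rangle_A|^2\}\big(\|b\|_A^2+\|c\|_A^2+2|\langle b\mid c\rangle_A|\big).
\]
Taking square roots and substituting into the Cauchy--Schwarz estimate above delivers the claimed inequality.

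The proof is essentially routine; the only point requiring a little care is the factoring of the maximum out of the diagonal and cross terms, which is what produces the clean $\max\{|\langle a\mid b\rangle_A|,|\langle a\mid c\rangle_A|\}$ prefactor rather than the symmetric expression appearing in \cref{leme1}. I would also note that, unlike in \cref{leme1}, no division is performed at any stage, so the argument applies verbatim to the merely positive semi-definite form $\langle\cdot\mid\cdot\rangle_A$ and no separate treatment of the degenerate case $a,b,c\in\mathcal{N}(A)$ is needed.
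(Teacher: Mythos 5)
Your proof is correct, but it is not the route the paper takes. The paper's own proof is a one-line reduction: it quotes the corresponding inequality for an ordinary inner product from Dragomir's book \cite[p. 132]{D.2} and transfers it to the semi-inner product by substituting $x=A^{1/2}a$, $y=A^{1/2}b$, $z=A^{1/2}c$, using $\langle u\mid v\rangle_A=\langle A^{1/2}u\mid A^{1/2}v\rangle$. You instead give a self-contained argument modeled on the proof of \cref{leme1}: the identity $\langle a\mid w\rangle_A=|\langle a\mid b\rangle_A|^2+|\langle a\mid c\rangle_A|^2$ for $w=\langle a\mid b\rangle_A b+\langle a\mid c\rangle_A c$, Cauchy--Schwarz for the semi-inner product, the expansion of $\|w\|_A^2$ as in \eqref{n02}, and the bound $|\langle a\mid b\rangle_A|\,|\langle a\mid c\rangle_A|\leq\max\{|\langle a\mid b\rangle_A|^2,|\langle a\mid c\rangle_A|^2\}$ to pull the maximum out of both the diagonal and cross terms; all steps check out under the paper's convention (linearity in the first argument). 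What each approach buys: the paper's proof is shorter and illustrates the general transfer principle by which any inner-product inequality passes to $\langle\cdot\mid\cdot\rangle_A$; yours makes the paper independent of the cited source (in fact it reproves Dragomir's inequality when $A=I$), and, as you note, it uses only a single unsquared Cauchy--Schwarz step rather than the squared one in \eqref{n01}, so no division occurs and the degenerate case $|\langle a\mid b\rangle_A|^2+|\langle a\mid c\rangle_A|^2=0$ needs no separate treatment --- an improvement in rigor over the case analysis attempted in the proof of \cref{leme1}.
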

\begin{proof}
Let $a, b, c\in \mathcal{H}$. Recall from \cite[p. 132]{D.2} that
\begin{align*}
|\langle x\mid y\rangle|^2 + |\langle x\mid z\rangle|^2
\leq \|x\|\max\{|\langle x\mid y\rangle|, |\langle x\mid z\rangle|\}\Big(\|y\|^2
+ \|z\|^2 + 2|\langle y\mid z\rangle|\Big)^{\frac{1}{2}},
\end{align*}
for every $x, y, z\in \mathcal{H}$. So, by choosing $x=A^{1/2}a$, $y=A^{1/2}b$ and $z=A^{1/2}c$ in the above inequality we get the desired result.
\end{proof}
%%%%%%%%%%%%%%%%%%%%%%%%%%%%%%%%%%%%%%%%%%%%%%%%%%%%%%%%%%%%%%%%%%%%%%%%
Next, we prove another upper bound for the $A$-joint numerical radius of a pair of operators.
\begin{theorem}\label{th.2.14}
Let $T\in \mathcal{B}_A(\mathcal{H})$. Then
\begin{align}\label{sh1}
\omega_{A,\text{e}}(T,S)
 &\leq \sqrt{\max\Big\{\omega_A(T),\omega_A(S)\Big\}\sqrt{\|T^\sharp T+S^\sharp S\|_A+2\omega_A(S^\sharp T)} }.
\end{align}
\end{theorem}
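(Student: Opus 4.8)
The plan is to run essentially the same pointwise-then-supremum argument used in Theorems~\ref{mai9} and \ref{mai10}, but feeding the vectors into Lemma~\ref{le.2.13} instead of Lemma~\ref{leme1}. First I would fix $x\in\mathbb{S}^A(0,1)$ and apply Lemma~\ref{le.2.13} with $a=x$, $b=Tx$ and $c=Sx$. This immediately gives
\begin{align*}
|\langle Tx\mid x\rangle_A|^2 + |\langle Sx\mid x\rangle_A|^2
\leq \max\big\{|\langle Tx\mid x\rangle_A|, |\langle Sx\mid x\rangle_A|\big\}
\sqrt{\|Tx\|_A^2+ \|Sx\|_A^2 + 2|\langle Tx\mid Sx\rangle_A|},
\end{align*}
since $\|x\|_A=1$. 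The left-hand side is exactly the square of the quantity whose supremum defines $\omega_{A,\text{e}}(T,S)$, so the whole strategy is to bound each of the two factors on the right by its natural supremum.

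Next I would bound the two factors separately. For the $\max$ factor, since $|\langle Tx\mid x\rangle_A|\leq \omega_A(T)$ and $|\langle Sx\mid x\rangle_A|\leq\omega_A(S)$ for every $x\in\mathbb{S}^A(0,1)$, we get $\max\big\{|\langle Tx\mid x\rangle_A|, |\langle Sx\mid x\rangle_A|\big\}\leq \max\{\omega_A(T),\omega_A(S)\}$. For the radical factor, I would rewrite the terms under the square root in the familiar way: $\|Tx\|_A^2=\langle T^\sharp Tx\mid x\rangle_A$, $\|Sx\|_A^2=\langle S^\sharp Sx\mid x\rangle_A$, and $|\langle Tx\mid Sx\rangle_A|=|\langle S^\sharp Tx\mid x\rangle_A|$. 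Hence
\begin{align*}
\|Tx\|_A^2+ \|Sx\|_A^2 + 2|\langle Tx\mid Sx\rangle_A|
=\langle (T^\sharp T+S^\sharp S)x\mid x\rangle_A + 2|\langle S^\sharp Tx\mid x\rangle_A|
\leq \omega_A(T^\sharp T+S^\sharp S)+2\omega_A(S^\sharp T),
\end{align*}
and since $T^\sharp T+S^\sharp S\geq_A 0$ is $A$-selfadjoint, \eqref{aself1} turns the first term into $\|T^\sharp T+S^\sharp S\|_A$.

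Combining the two bounds gives, for each $x\in\mathbb{S}^A(0,1)$,
\begin{align*}
|\langle Tx\mid x\rangle_A|^2 + |\langle Sx\mid x\rangle_A|^2
\leq \max\{\omega_A(T),\omega_A(S)\}\sqrt{\|T^\sharp T+S^\sharp S\|_A+2\omega_A(S^\sharp T)}.
\end{align*}
Taking the supremum over $x\in\mathbb{S}^A(0,1)$ on the left yields $\omega_{A,\text{e}}^2(T,S)$ on the left, and taking the square root gives exactly \eqref{sh1}. The only subtlety I expect is bookkeeping: the $\max$ and the radical must each be bounded by a constant independent of $x$ before the supremum is taken, so that the supremum of the product is controlled by the product of the two suprema. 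Because both factors are monotone in $x$ through known $A$-numerical-radius bounds, no genuine difficulty arises — this is the standard "separate the two factors, then pass to the supremum" device, and the $A$-selfadjointness identity \eqref{aself1} does all the heavy lifting.
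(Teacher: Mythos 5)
Your proposal is correct and follows essentially the same route as the paper: apply Lemma~\ref{le.2.13} with $a=x$, $b=Tx$, $c=Sx$, bound the $\max$ factor by $\max\{\omega_A(T),\omega_A(S)\}$, rewrite the radicand via $T^\sharp$, $S^\sharp$ and bound it by $\omega_A(T^\sharp T+S^\sharp S)+2\omega_A(S^\sharp T)=\|T^\sharp T+S^\sharp S\|_A+2\omega_A(S^\sharp T)$ using \eqref{aself1}, then take the supremum. In fact your write-up is slightly cleaner than the paper's, which drops the factor $2$ in front of $\omega_A(S^\sharp T)$ in its intermediate displays (an evident typo), whereas you carry it through correctly.
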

\begin{proof}
Let $x\in \mathbb{S}^A(0,1)$. By choosing in Lemma \ref{le.2.13} $a = x, b = Tx$ and $c = Sx$ one has
\begin{align*}
&|\langle x\mid Tx\rangle_A|^2 + |\langle x\mid Sx\rangle_A|^2\\
&\leq \|x\|_A\max\left\{|\langle x\mid Tx\rangle_A|, |\langle x\mid Sx\rangle_A|\right\}\sqrt{\|Tx\|_A^2+ \|Sx\|_A^2 + 2|\langle Tx\mid Sx\rangle_A|}\\
& \leq \max\left\{\omega_A(T),\omega_A(S)\right\} \sqrt{\big\langle \left(T^\sharp T+S^\sharp S\right)x\mid x\big\rangle_A + 2|\langle S^\sharp Tx\mid x\rangle_A|}\\
& \leq  \max\left\{\omega_A(T),\omega_A(S)\right\}\sqrt{\omega_A(T^\sharp T+S^\sharp S)+\omega_A(S^\sharp T)}\\
& =\max\left\{\omega_A(T),\omega_A(S)\right\}\sqrt{\|T^\sharp T+S^\sharp S\|_A+\omega_A(S^\sharp T)},
\end{align*}
where the last inequality follows from \eqref{aself1} since $T^\sharp T+S^\sharp S\geq_A 0$. Thus,
\begin{align*}
|\langle x\mid Tx\rangle_A|^2 + |\langle x\mid Sx\rangle_A|^2 \leq \max\left(\omega_A(T),\omega_A(S)\right)+\sqrt{\|T^\sharp T+S^\sharp S\|_A+\omega_A(S^\sharp T)},
\end{align*}
for all $x\in \mathbb{S}^A(0,1)$. Therefore, the desired result follows immediately by taking the supremum over all $x\in \mathbb{S}^A(0,1)$.
\end{proof}

\begin{corollary}
Let $T\in \mathcal{B}_{A}(\mathcal{H})$. Then,
\begin{align}
\omega_A(T) \le \frac{\sqrt{2}}{2}\sqrt{\|T\|_A\sqrt{ \|T^\sharp T+TT^\sharp\|_A+2\omega_A(T^2)} }\leq \|T\|_A.
\end{align}
\end{corollary}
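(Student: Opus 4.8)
The plan is to deduce this corollary directly from Theorem~\ref{th.2.14}, imitating the way the bound \eqref{sharpmai} was extracted from Theorem~\ref{mai10}: I would substitute the pair $(T^\sharp,T)$ for $(T,S)$ in the inequality of Theorem~\ref{th.2.14} and then simplify each ingredient. The first inequality of the corollary should fall out after that substitution, and the second should follow from routine norm estimates.

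First I would rewrite the left-hand side. From the defining relation $\langle Tx\mid x\rangle_A=\langle x\mid T^\sharp x\rangle_A$ one gets $\langle T^\sharp x\mid x\rangle_A=\overline{\langle Tx\mid x\rangle_A}$, hence $|\langle T^\sharp x\mid x\rangle_A|=|\langle Tx\mid x\rangle_A|$ for every $x\in\mathbb{S}^A(0,1)$. Therefore $\omega_{A,\text{e}}(T^\sharp,T)=\sqrt{2}\,\omega_A(T)$, so after the substitution the left-hand side of Theorem~\ref{th.2.14} becomes a multiple of $\omega_A(T)$.

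Next I would simplify the three pieces of the right-hand side. The prefactor $\max\{\omega_A(T^\sharp),\omega_A(T)\}$ equals $\omega_A(T)$ because $\omega_A(X^\sharp)=\omega_A(X)$, and I would majorize it by $\|T\|_A$ using \eqref{refine1} in order to reach the stated form. The inner term $\omega_A\big((T^\sharp)^2\big)$ equals $\omega_A(T^2)$, since $(T^\sharp)^2=(T^2)^\sharp$ and $\omega_A$ is $\sharp$-invariant. The genuine technical point is the norm term $\big\|(T^\sharp)^\sharp T^\sharp+T^\sharp T\big\|_A$, which I claim equals $\|T^\sharp T+TT^\sharp\|_A$: both $(T^\sharp)^\sharp T^\sharp$ and $TT^\sharp$ are $A$-positive with the \emph{same} $A$-quadratic form, because $\langle (T^\sharp)^\sharp T^\sharp x\mid x\rangle_A=\|T^\sharp x\|_A^2=\langle TT^\sharp x\mid x\rangle_A$ for every $x$. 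Hence the $A$-selfadjoint operators $(T^\sharp)^\sharp T^\sharp+T^\sharp T$ and $TT^\sharp+T^\sharp T$ have equal $A$-numerical radii, and by \eqref{aself1} equal $A$-seminorms. Combining these three observations yields $\sqrt{2}\,\omega_A(T)\le\sqrt{\|T\|_A\sqrt{\|T^\sharp T+TT^\sharp\|_A+2\omega_A(T^2)}}$, which is the first asserted inequality after dividing by $\sqrt{2}$.

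For the second inequality I would square and reduce it to the estimate $\|T^\sharp T+TT^\sharp\|_A+2\omega_A(T^2)\le 4\|T\|_A^2$, i.e. $\sqrt{\|T^\sharp T+TT^\sharp\|_A+2\omega_A(T^2)}\le 2\|T\|_A$; the triangle inequality with \eqref{diez} gives $\|T^\sharp T+TT^\sharp\|_A\le 2\|T\|_A^2$, while \eqref{refine1} and \eqref{crucial0} give $\omega_A(T^2)\le\|T^2\|_A\le\|T\|_A^2$, and adding these bounds closes the argument. The only place I expect real friction is the identity $\big\|(T^\sharp)^\sharp T^\sharp+T^\sharp T\big\|_A=\|T^\sharp T+TT^\sharp\|_A$, since $(T^\sharp)^\sharp=P_{\overline{\mathcal{R}(A)}}TP_{\overline{\mathcal{R}(A)}}$ is not literally $T$; the quadratic-form computation above resolves it, and if one prefers a direct route, one can instead use $A^{1/2}P_{\overline{\mathcal{R}(A)}}=A^{1/2}$ together with $P_{\overline{\mathcal{R}(A)}}T^\sharp=T^\sharp$ to show that $(T^\sharp)^\sharp T^\sharp+T^\sharp T$ and $TT^\sharp+T^\sharp T$ act identically after composition with $A^{1/2}$.
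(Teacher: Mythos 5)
Your proof is correct and takes essentially the same route as the paper: the paper also obtains this corollary by substituting a sharp-pair into Theorem~\ref{th.2.14} and then invoking the second inequality in \eqref{refine1}, the only cosmetic difference being that it substitutes $\big((T^{\sharp})^{\sharp},T^{\sharp}\big)$ where you substitute $(T^{\sharp},T)$. The identifications you spell out (the $\sharp$-invariance of $\omega_A$, $(T^{\sharp})^2=(T^2)^{\sharp}$, and the equality of the quadratic forms of $(T^{\sharp})^{\sharp}T^{\sharp}$ and $TT^{\sharp}$) are precisely the steps the paper's one-line proof leaves implicit.
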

\begin{proof}
Follows immediately by replacing $T$ and $S$ by $(T^\sharp)^\sharp$ and $T^\sharp$ respectively in Theorem \ref{th.2.14} and then using the second inequality in \eqref{refine1}.
\end{proof}
%%%%%%%%%%%%%%%%%%%%%%%%%%%%%%%%%%%%%%%%%%%%%%%%%%%%%%%%%%%%%%%%%%%%%%%%
The following corollary in an immediate consequence of Theorem \ref{th.2.14} and generalizes \cite[Theorem 2.16]{zamanicheb2020}.
\begin{corollary}\label{improv}
Let $T\in \mathcal{B}_A(\mathcal{H})$. Then,
\begin{align*}
{d\omega_A}(T) \leq \sqrt{\max\left\{\omega_A(T), \omega_A(T^\sharp T)\right\}\sqrt{\omega_A\left[(T^\sharp T)^2 + T^\sharp T\right] + 2\omega_A(T^\sharp T^2)}}.
\end{align*}
\end{corollary}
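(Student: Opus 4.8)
The plan is to derive this corollary from Theorem \ref{th.2.14} exactly as Corollaries \ref{cor01} and \ref{th.2.11} were derived from their respective theorems, namely by specializing the pair $(T,S)$ to $(T, T^\sharp T)$ and invoking the Davis--Wielandt identity \eqref{davis}. First I would record that, by \eqref{davis}, ${d\omega_A}(T)=\omega_{A,\text{e}}(T,T^\sharp T)$, so it suffices to bound $\omega_{A,\text{e}}(T,T^\sharp T)$. Applying \eqref{sh1} from Theorem \ref{th.2.14} with $S=T^\sharp T$ yields
\begin{align*}
\omega_{A,\text{e}}(T,T^\sharp T)\leq \sqrt{\max\big\{\omega_A(T),\omega_A(T^\sharp T)\big\}\sqrt{\big\|T^\sharp T+(T^\sharp T)^\sharp(T^\sharp T)\big\|_A+2\omega_A\big((T^\sharp T)^\sharp T\big)}}.
\end{align*}

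Next I would simplify the two inner quantities using the structural facts already established. Since $T^\sharp T$ is $A$-selfadjoint with $\mathcal{R}(T^\sharp T)\subseteq\overline{\mathcal{R}(A)}$, Lemma \ref{l01} gives $(T^\sharp T)^\sharp=T^\sharp T$. Consequently $(T^\sharp T)^\sharp(T^\sharp T)=(T^\sharp T)^2$, so that $T^\sharp T+(T^\sharp T)^\sharp(T^\sharp T)=(T^\sharp T)^2+T^\sharp T$, and likewise $(T^\sharp T)^\sharp T=(T^\sharp T)T=T^\sharp T^2$, whence $\omega_A\big((T^\sharp T)^\sharp T\big)=\omega_A(T^\sharp T^2)$. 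This matches the two terms appearing in the claimed bound.

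The only remaining point is to replace the seminorm $\|(T^\sharp T)^2+T^\sharp T\|_A$ by the $A$-numerical radius $\omega_A\big((T^\sharp T)^2+T^\sharp T\big)$, as the statement requires. For this I would observe that the operator $(T^\sharp T)^2+T^\sharp T$ is $A$-positive: indeed $T^\sharp T\geq_A 0$, and since $T^\sharp T$ is $A$-selfadjoint its $A$-square $(T^\sharp T)^2=(T^\sharp T)^\sharp(T^\sharp T)\geq_A 0$ as well, so the sum is $A$-positive and in particular $A$-selfadjoint. Then \eqref{aself1} delivers $\|(T^\sharp T)^2+T^\sharp T\|_A=\omega_A\big((T^\sharp T)^2+T^\sharp T\big)$, and combining everything gives the asserted inequality. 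I do not expect any genuine obstacle here: the proof is a substitution followed by routine bookkeeping of the $\sharp$-operation, the sole subtlety being the verification that $(T^\sharp T)^2+T^\sharp T$ is $A$-positive so that \eqref{aself1} applies and the seminorm can legitimately be rewritten as an $A$-numerical radius.
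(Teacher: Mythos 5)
Your proposal is correct and follows exactly the paper's intended route: the paper treats this corollary as an immediate consequence of Theorem \ref{th.2.14}, obtained by the same substitution $S=T^\sharp T$ together with \eqref{davis} and $(T^\sharp T)^\sharp=T^\sharp T$ that it spells out for Corollaries \ref{cor01} and \ref{th.2.11}. Your extra care in justifying the passage from $\|(T^\sharp T)^2+T^\sharp T\|_A$ to $\omega_A\big((T^\sharp T)^2+T^\sharp T\big)$ via $A$-positivity and \eqref{aself1} is exactly the bookkeeping the paper leaves implicit.
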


%%%%%%%%%%%%%%%%%%%%%%%
By using Lemma \ref{le.2.13}, another upper bound for the $A$-Davis--Wielandt radius of operators in $\mathcal{B}_A(\mathcal{H})$ can be derived as follows.
\begin{theorem}\label{th.2.15}
Let $T\in \mathcal{B}_A(\mathcal{H})$. Then,
\begin{align*}
{d\omega_A}(T) \leq \sqrt{\|T\|_A\,\max\left\{\omega_A(T), \omega_A(T^\sharp T)\right\}\sqrt{1 + \|T\|_A^2 + 2\omega_A(T)}}.
\end{align*}
\end{theorem}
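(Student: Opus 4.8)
The plan is to exploit identity \eqref{davis}, which tells us that $d\omega_A(T)=\omega_{A,\text{e}}(T,T^\sharp T)$, so that the asserted inequality is really a bound on the $A$-joint numerical radius of the pair $(T,T^\sharp T)$. I would therefore mimic the template used in Theorem \ref{th.2.14}: fix $x\in\mathbb{S}^A(0,1)$ and apply Lemma \ref{le.2.13} with the choice $a=x$, $b=Tx$ and $c=T^\sharp Tx$. Since $\langle T^\sharp Tx\mid x\rangle_A=\|Tx\|_A^2\ge 0$, the left-hand side produced by the lemma is exactly $|\langle Tx\mid x\rangle_A|^2+\|Tx\|_A^4$, whose supremum over the $A$-unit sphere is precisely $d\omega_A(T)^2$. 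Thus the whole argument reduces to estimating the three factors on the right-hand side of Lemma \ref{le.2.13}.

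The easy factors come first. The leading factor is $\|x\|_A=1$. For the maximum factor I would use $|\langle x\mid Tx\rangle_A|=|\langle Tx\mid x\rangle_A|\le\omega_A(T)$ together with $|\langle x\mid T^\sharp Tx\rangle_A|=|\langle T^\sharp Tx\mid x\rangle_A|\le\omega_A(T^\sharp T)$, so that this factor is bounded by $\max\{\omega_A(T),\omega_A(T^\sharp T)\}$. For two of the three summands under the radical I would use the operator seminorm bound from \eqref{semii}, namely $\|Tx\|_A^2\le\|T\|_A^2$, and, invoking \eqref{diez}, $\|T^\sharp Tx\|_A^2\le\|T^\sharp T\|_A^2=\|T\|_A^4$.

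The delicate point — and the one step I would treat with care — is the cross term $2|\langle Tx\mid T^\sharp Tx\rangle_A|$. The crude Cauchy--Schwarz estimate $\|Tx\|_A\,\|T^\sharp Tx\|_A\le\|T\|_A^3$ would only produce a $2\|T\|_A$ inside the radical, not the sharper $2\omega_A(T)$ demanded by the statement. Instead I would write $\langle Tx\mid T^\sharp Tx\rangle_A=\overline{\langle T^\sharp(Tx)\mid Tx\rangle_A}$ and set $w=Tx$, so that $|\langle Tx\mid T^\sharp Tx\rangle_A|=|\langle T^\sharp w\mid w\rangle_A|\le\omega_A(T^\sharp)\,\|w\|_A^2$ by homogeneity of the $A$-numerical radius. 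Since $\omega_A(T^\sharp)=\omega_A(T)$ and $\|w\|_A=\|Tx\|_A\le\|T\|_A$, this yields $|\langle Tx\mid T^\sharp Tx\rangle_A|\le\omega_A(T)\,\|T\|_A^2$, which is exactly what is needed to make the middle term of the radical come out as $2\omega_A(T)$ after factoring.

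Collecting the three estimates, the quantity under the radical is at most $\|T\|_A^2\bigl(1+\|T\|_A^2+2\omega_A(T)\bigr)$, so the radical itself is at most $\|T\|_A\sqrt{1+\|T\|_A^2+2\omega_A(T)}$. Substituting back into Lemma \ref{le.2.13} gives
\[
|\langle Tx\mid x\rangle_A|^2+\|Tx\|_A^4\le\|T\|_A\,\max\{\omega_A(T),\omega_A(T^\sharp T)\}\sqrt{1+\|T\|_A^2+2\omega_A(T)}
\]
for every $x\in\mathbb{S}^A(0,1)$. Taking the supremum over such $x$ identifies the left-hand side with $d\omega_A(T)^2$, and extracting a square root produces the stated inequality. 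I expect the only genuine obstacle to be the careful handling of the cross term described above; the remaining manipulations are routine applications of \eqref{semii}, \eqref{diez} and the invariance $\omega_A(X^\sharp)=\omega_A(X)$.
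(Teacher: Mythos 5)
Your proof is correct, and it reaches exactly the stated bound, but it enters Lemma \ref{le.2.13} through a different door than the paper does. The paper's proof chooses $a = Tx$, $b = x$, $c = Tx$: then the left-hand side of the lemma is again $|\langle Tx\mid x\rangle_A|^2 + \|Tx\|_A^4$, the prefactor $\|a\|_A = \|Tx\|_A \le \|T\|_A$ supplies the outer $\|T\|_A$, the max factor is $\max\{|\langle Tx\mid x\rangle_A|, |\langle T^\sharp Tx\mid x\rangle_A|\}$, and --- crucially --- the cross term under the radical is automatically $2|\langle x\mid Tx\rangle_A| \le 2\omega_A(T)$, so no extra idea is needed and the bound drops out in one displayed chain. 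Your choice $a = x$, $b = Tx$, $c = T^\sharp Tx$ is the more systematic one (it is literally the Theorem \ref{th.2.14} template applied to the pair $(T, T^\sharp T)$ via \eqref{davis}), but it creates the cross term $2|\langle Tx\mid T^\sharp Tx\rangle_A|$, which you resolve correctly by writing it as $|\langle T^\sharp w\mid w\rangle_A|$ with $w = Tx$ and using homogeneity together with $\omega_A(T^\sharp) = \omega_A(T)$ --- a fact the paper itself invokes elsewhere (for full rigor, the degenerate case $\|Tx\|_A = 0$ is covered by the Cauchy--Schwarz inequality for the semi-inner product); you then factor $\|T\|_A^2$ out of the radical. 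So both arguments rest on the same key lemma, but the paper's substitution sidesteps the one delicate step your route requires, while your route makes transparent why the theorem is the natural $(T, T^\sharp T)$ specialization of the joint-numerical-radius machinery rather than an ad hoc estimate.
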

\begin{proof}
Let $x\in \mathbb{S}^A(0,1)$. By choosing in Lemma \ref{le.2.13} $a = Tx, b = x$ and $c = Tx$ we observe that
\begin{align*}
&|\langle Tx\mid x\rangle_A|^2 + \|Tx\|_A^4 \\
&= |\langle Tx\mid x\rangle_A|^2 + |\langle Tx\mid Tx\rangle_A|^2\\
&\leq \|Tx\|_A\max\{|\langle Tx\mid x\rangle_A|, |\langle Tx\mid Tx\rangle_A|\}\sqrt{1 + \|Tx\|_A^2 + 2|\langle x\mid Tx\rangle_A|}\\
& = \|Tx\|_A\max\{|\langle Tx\mid x\rangle_A|, |\langle T^\sharp Tx\mid x\rangle_A|\}\sqrt{1 + \|Tx\|_A^2 + 2|\langle x\mid Tx\rangle_A|}\\
& \leq \|T\|_A\,\max\{\omega_A(T), \omega_A(T^\sharp T)\}\sqrt{1 + \|T\|_A^2 + 2\omega_A(T)}.
\end{align*}
Thus
\begin{align}\label{fin}
|\langle Tx\mid x\rangle_A|^2 + \|Tx\|_A^4\leq \|T\|_A\,\max\{\omega_A(T), \omega_A(T^\sharp T)\}\sqrt{1 + \|T\|^2 + 2\omega_A(T)},
\end{align}
for all $x\in \mathbb{S}^A(0,1)$. Hence, by taking the supremum over $x\in \mathbb{S}^A(0,1)$ in \eqref{fin} we obtain the required result.
\end{proof}
%%%%%%%%%%%%%%%%%%%
 The next theorem provides an upper and lower bound of the $A$-joint numerical radius of two operators in $\mathcal{B}_A(\mathcal{H})$.
 \begin{theorem}\label{th.new}
Let $T,S\in \mathcal{B}_A(\mathcal{H})$. Then,
\begin{align*}
\frac{\sqrt{2}}{2}\max\left\{\omega_A(T+S),\omega_A(T-S)\right\}\leq\omega_{A,\text{e}}(T,S) \leq \frac{\sqrt{2}}{2}\sqrt{\omega_A^2(T+S)+\omega_A^2(T-S)}.
\end{align*}
Moreover, the constant $\frac{\sqrt{2}}{2}$ is sharp in both inequalities.
\end{theorem}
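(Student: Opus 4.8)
The plan is to reduce everything to the elementary parallelogram law for two complex numbers and then pass to suprema over the $A$-unit sphere. Fix $x\in\mathbb{S}^A(0,1)$ and abbreviate $z_1=\langle Tx\mid x\rangle_A$ and $z_2=\langle Sx\mid x\rangle_A$, so that $\langle(T+S)x\mid x\rangle_A=z_1+z_2$ and $\langle(T-S)x\mid x\rangle_A=z_1-z_2$. The identity I would rely on throughout is
\begin{equation*}
|z_1+z_2|^2+|z_1-z_2|^2=2\big(|z_1|^2+|z_2|^2\big),
\end{equation*}
valid for all $z_1,z_2\in\mathbb{C}$.

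For the upper bound, I would rewrite the above as $|z_1|^2+|z_2|^2=\tfrac12\big(|z_1+z_2|^2+|z_1-z_2|^2\big)$ and bound each term on the right by the corresponding $A$-numerical radius, giving $|z_1|^2+|z_2|^2\le\tfrac12\big(\omega_A^2(T+S)+\omega_A^2(T-S)\big)$ for every $x\in\mathbb{S}^A(0,1)$. Taking the supremum over such $x$ and invoking the definition of $\omega_{A,\text{e}}$ yields $\omega_{A,\text{e}}^2(T,S)\le\tfrac12\big(\omega_A^2(T+S)+\omega_A^2(T-S)\big)$, which is precisely the claimed right-hand inequality after taking square roots.

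For the lower bound, I would instead discard one nonnegative term in the parallelogram identity to obtain the two pointwise estimates $|z_1|^2+|z_2|^2\ge\tfrac12|z_1+z_2|^2$ and $|z_1|^2+|z_2|^2\ge\tfrac12|z_1-z_2|^2$. Since these hold for every $x\in\mathbb{S}^A(0,1)$, taking suprema (using that $\sup f\ge\sup g$ whenever $f\ge g$ pointwise) gives $\omega_{A,\text{e}}^2(T,S)\ge\tfrac12\omega_A^2(T+S)$ and $\omega_{A,\text{e}}^2(T,S)\ge\tfrac12\omega_A^2(T-S)$ separately; combining them produces $\omega_{A,\text{e}}^2(T,S)\ge\tfrac12\max\{\omega_A^2(T+S),\omega_A^2(T-S)\}$, which is the left-hand inequality.

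Finally, for sharpness I would test the single choice $S=T$ with $T$ any operator satisfying $\omega_A(T)\neq0$ (for instance $T=S=I$, which lies in $\mathcal{B}_A(\mathcal{H})$ and has $\omega_A(I)=1$). Then $\omega_{A,\text{e}}(T,T)=\sqrt2\,\omega_A(T)$ directly from the definition, while $\omega_A(T+S)=2\omega_A(T)$ and $\omega_A(T-S)=0$; substituting shows that both the left-hand and the right-hand sides collapse to $\sqrt2\,\omega_A(T)$, so equality is attained in both inequalities simultaneously and the constant $\tfrac{\sqrt2}{2}$ cannot be improved. I do not expect any genuine obstacle here: the only points requiring care are choosing the correct direction when moving from the pointwise parallelogram estimates to the suprema, and exhibiting one example that certifies sharpness of both bounds at once.
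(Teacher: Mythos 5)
Your proposal is correct and follows essentially the same route as the paper: pointwise estimates on $|\langle Tx\mid x\rangle_A|^2+|\langle Sx\mid x\rangle_A|^2$ via the parallelogram identity for complex numbers, suprema over $\mathbb{S}^A(0,1)$, and the choice $T=S$ to get equality (with value $\sqrt{2}\,\omega_A(T)$) on both sides at once, certifying sharpness. The only cosmetic difference is in the lower bound, where the paper uses $\sqrt{|z_1|^2+|z_2|^2}\geq\tfrac{\sqrt{2}}{2}(|z_1|+|z_2|)\geq\tfrac{\sqrt{2}}{2}|z_1\pm z_2|$ instead of discarding a term in the parallelogram identity; the two manipulations are equivalent.
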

\begin{proof}
For every $x\in \mathcal{H}$, we have
\begin{align*}
(|\langle Tx\mid x \rangle_A |^2 +|\langle Sx\mid x \rangle_A  | ^2 )^{\frac{1}{2}} & \geq \frac{\sqrt{2}}{2} (|\langle Tx\mid x \rangle_A  | +|\langle Sx\mid x \rangle_A  |)\\
&\geq \frac{\sqrt{2}}{2} |\langle Tx\mid x \rangle_A  \pm \langle Sx\mid x \rangle_A  | \\
&= \frac{\sqrt{2}}{2} |\langle (T \pm S) x\mid x \rangle_A  |\,.
\end{align*}
Taking supremum over all $x\in\mathbb{S}^A(0,1)$ yields that
\begin{equation}\label{ff2020}
\omega_{A,\text{e}}(T,S) \geq \frac{\sqrt{2}}{2} \omega_A(T \pm S).
\end{equation}
This proves the first inequality in Theorem \ref{th.new}. On the other hand, for every $x\in\mathbb{S}^A(0,1)$ we have
\begin{equation}\label{tt2033}
|\langle Tx\mid x \rangle_A\pm\langle Sx\mid x \rangle_A  | ^2\leq \omega_A^2(T\pm S).
\end{equation}
So, an application of the parallelogram identity for complex numbers and \eqref{tt2033} gives
\begin{align*}
|\langle Tx\mid x \rangle_A |^2 +|\langle Sx\mid x \rangle_A  | ^2
& =\frac{1}{2}\Big(|\langle Tx\mid x \rangle_A+\langle Sx\mid x \rangle_A  | ^2+|\langle Tx\mid x \rangle_A-\langle Sx\mid x \rangle_A  | ^2 \Big) \\
 &\leq \frac{1}{2}\Big(\omega_A^2(T+ S)+\omega_A^2(T- S) \Big),
\end{align*}
for every $x\in\mathbb{S}^A(0,1)$. Taking supremum over all $x\in\mathbb{S}^A(0,1)$ yields that
\[\omega_{A,\text{e}}^2(T,S) \leq \frac{1}{2}\Big(\omega_A^2(T+ S)+\omega_A^2(T- S) \Big).\]
This shows the first inequality in Theorem \ref{th.new}. For sharpness one can obtain the same quantity $\sqrt{2} \omega_A(T)$ on both sides of the inequality by putting $T=S$.
\end{proof}
The following corollary in an immediate consequence of Theorem \ref{th.new} and \eqref{aself1}.
 \begin{corollary}
Let $T,S\in \mathcal{B}_A(\mathcal{H})$ be two $A$-selfadjoint operators. Then,
\begin{align*}
\frac{\sqrt{2}}{2}\max\left\{\|T+S\|_A,\|T-S\|_A\right\}\leq\omega_{A,\text{e}}(T,S) \leq \frac{\sqrt{2}}{2}\sqrt{\|T+S\|_A^2+\|T-S\|_A^2}.
\end{align*}
\end{corollary}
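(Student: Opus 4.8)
The plan is to read off this corollary directly from Theorem~\ref{th.new} once I observe that $T+S$ and $T-S$ are themselves $A$-selfadjoint, which is precisely what allows me to turn every $A$-numerical radius occurring in that theorem into the corresponding $A$-seminorm via \eqref{aself1}.

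First I would unpack the hypothesis: $T$ and $S$ being $A$-selfadjoint means exactly that $AT$ and $AS$ are selfadjoint operators on $\mathcal{H}$. Since the selfadjoint elements of $\mathcal{B}(\mathcal{H})$ form a real-linear subspace, the combinations $A(T+S)=AT+AS$ and $A(T-S)=AT-AS$ are again selfadjoint. Hence both $T+S$ and $T-S$ are $A$-selfadjoint, and in particular they lie in $\mathcal{B}_A(\mathcal{H})$, so the framework of Theorem~\ref{th.new} applies to the pair $(T,S)$ without any additional verification.

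Next I would invoke \eqref{aself1}, which asserts $\|X\|_A=\omega_A(X)$ for every $A$-selfadjoint operator $X$. Applying this to $X=T+S$ and to $X=T-S$ yields the two equalities $\omega_A(T+S)=\|T+S\|_A$ and $\omega_A(T-S)=\|T-S\|_A$. Substituting these into both the lower and upper bounds of Theorem~\ref{th.new} converts $\max\{\omega_A(T+S),\omega_A(T-S)\}$ into $\max\{\|T+S\|_A,\|T-S\|_A\}$ and $\sqrt{\omega_A^2(T+S)+\omega_A^2(T-S)}$ into $\sqrt{\|T+S\|_A^2+\|T-S\|_A^2}$, giving exactly the claimed chain of inequalities.

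There is essentially no obstacle here; the argument is purely a substitution. The only step meriting a sentence of justification is the closure of the class of $A$-selfadjoint operators under the operations $T\mapsto T\pm S$, and that follows at once from the definition together with the real-linearity of selfadjointness, as noted above.
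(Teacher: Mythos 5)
Your proposal is correct and coincides with the paper's own argument: the corollary is obtained exactly by applying Theorem~\ref{th.new} to the pair $(T,S)$ and then using \eqref{aself1} on the $A$-selfadjoint operators $T\pm S$ to replace $\omega_A(T\pm S)$ by $\|T\pm S\|_A$. Your added remark that $A$-selfadjointness is preserved under sums and differences (since $A(T\pm S)=AT\pm AS$) is precisely the implicit justification the paper relies on.
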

Another bounds of $\omega_{A,\text{e}}(T,S)$ can be stated as follows.
 \begin{theorem}\label{mai1000}
Let $T,S\in \mathcal{B}_A(\mathcal{H})$. Then,
\begin{align}\label{th.new2}
\frac{\sqrt{2}}{2}\sqrt{\omega_A(T^2+S^2)}\leq\omega_{A,\text{e}}(T,S) \leq \sqrt{\|T^{\sharp} T+S^{\sharp}S\|_A}.
\end{align}
%Moreover, the constant $\frac{\sqrt{2}}{2}$ is sharp in both inequalities.
\end{theorem}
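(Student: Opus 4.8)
The statement has two independent bounds. For the lower bound I would exhibit test configurations; for the upper bound I would reduce to a known estimate for the joint numerical radius of the tuple $(T^2,S^2)$ or apply the already-proved inequality \eqref{1.1}. Let me sketch each.

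The plan is to treat the upper bound first. For every $x\in\mathbb{S}^A(0,1)$ I would start from the elementary inequality $|\langle Tx\mid x\rangle_A|^2+|\langle Sx\mid x\rangle_A|^2\leq \|Tx\|_A^2+\|Sx\|_A^2$, which follows from the Cauchy–Schwarz inequality $|\langle Tx\mid x\rangle_A|\leq\|Tx\|_A\|x\|_A=\|Tx\|_A$ applied termwise. Then $\|Tx\|_A^2+\|Sx\|_A^2=\langle T^\sharp Tx\mid x\rangle_A+\langle S^\sharp Sx\mid x\rangle_A=\langle (T^\sharp T+S^\sharp S)x\mid x\rangle_A$. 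Since $T^\sharp T+S^\sharp S\geq_A 0$, taking the supremum over $x\in\mathbb{S}^A(0,1)$ and using \eqref{aself1} gives $\omega_{A,\text{e}}^2(T,S)\leq\omega_A(T^\sharp T+S^\sharp S)=\|T^\sharp T+S^\sharp S\|_A$, which is exactly the right-hand bound.

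For the lower bound I would use the superadditivity-type trick already employed in Theorem \ref{th.new}. The key observation is that the $A$-joint numerical radius dominates $\tfrac{1}{\sqrt2}$ times the $A$-numerical radius of any $A$-selfadjoint combination $\alpha T+\beta S$ with $|\alpha|^2+|\beta|^2=1$. More precisely, for each fixed $x$ I would bound $|\langle T^2x\mid x\rangle_A+\langle S^2x\mid x\rangle_A|$ in terms of $\|Tx\|_A^2+\|Sx\|_A^2$ and $|\langle Tx\mid x\rangle_A|^2+|\langle Sx\mid x\rangle_A|^2$, most naturally by writing $\langle T^2x\mid x\rangle_A=\langle Tx\mid T^\sharp x\rangle_A$ and invoking Cauchy–Schwarz, so that $\omega_A(T^2+S^2)\leq 2\,\omega_{A,\text{e}}^2(T,S)$; rearranging yields $\tfrac{\sqrt2}{2}\sqrt{\omega_A(T^2+S^2)}\leq\omega_{A,\text{e}}(T,S)$.

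The hard part will be the lower bound: controlling $\langle T^2x\mid x\rangle_A$ by $|\langle Tx\mid x\rangle_A|$ rather than by $\|Tx\|_A$ is not immediate, since $T^2$ involves the interaction between $Tx$ and $T^\sharp x$, and the same for $S$. I expect the honest route is to expand $\omega_A(T^2+S^2)=\sup_x|\langle (T^2+S^2)x\mid x\rangle_A|$ and, for each $x$, estimate $|\langle T^2x\mid x\rangle_A|+|\langle S^2x\mid x\rangle_A|$. A convenient device is to pass through $\omega_A(T)^2\leq$ (something): using the mixed Cauchy–Schwarz bound $|\langle T^2x\mid x\rangle_A|=|\langle Tx\mid (T^\sharp)x\rangle_A|$ and then the inequality $|\langle u\mid v\rangle_A|\le\tfrac12(\|u\|_A^2+\|v\|_A^2)$ would overshoot, so I would instead try to relate $\omega_A(T^2+S^2)$ directly to $\omega_{A,\text{e}}^2(T,S)$ via the power inequality \eqref{apower} applied to a single cleverly chosen combination, mirroring the parallelogram argument of Theorem \ref{th.new}. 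Once the scalar inequality $|\langle T^2x\mid x\rangle_A+\langle S^2x\mid x\rangle_A|\leq 2(|\langle Tx\mid x\rangle_A|^2+|\langle Sx\mid x\rangle_A|^2)$ is established for each $x$, taking suprema finishes the proof.
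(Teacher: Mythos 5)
Your upper bound argument is correct and coincides in substance with the paper's: the paper simply cites the second inequality in \eqref{1.1}, and your Cauchy--Schwarz computation $|\langle Tx\mid x\rangle_A|^2+|\langle Sx\mid x\rangle_A|^2\le \langle (T^\sharp T+S^\sharp S)x\mid x\rangle_A$ followed by \eqref{aself1} is precisely how that inequality is proved. The lower bound, however, contains a genuine gap. Your proposed finishing step --- the pointwise scalar inequality $|\langle T^2x\mid x\rangle_A+\langle S^2x\mid x\rangle_A|\leq 2\big(|\langle Tx\mid x\rangle_A|^2+|\langle Sx\mid x\rangle_A|^2\big)$ for each fixed $x$ --- is false. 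Take $A=I$, $S=0$, and $T$ on $\mathbb{C}^2$ defined by $Te_1=e_2$, $Te_2=-e_1$, so that $T^2=-I$; for $x=e_1$ the left-hand side equals $1$ while the right-hand side equals $0$, since $\langle Te_1\mid e_1\rangle=0$. The target inequality $\omega_A(T^2+S^2)\le 2\,\omega_{A,\text{e}}^2(T,S)$ is true, but it is intrinsically a statement about suprema and cannot be established vector-by-vector; likewise, your Cauchy--Schwarz device $\langle T^2x\mid x\rangle_A=\langle Tx\mid T^\sharp x\rangle_A$ only controls $\langle T^2x\mid x\rangle_A$ by seminorms $\|Tx\|_A\|T^\sharp x\|_A$, never by $|\langle Tx\mid x\rangle_A|$, which (as you yourself note) overshoots.

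The missing idea --- which you gesture at with the phrase ``power inequality applied to a cleverly chosen combination'' but never execute --- is the paper's actual route: apply \eqref{ff2020} of Theorem \ref{th.new} to get $2\,\omega_{A,\text{e}}^2(T,S)\ge \tfrac12\big(\omega_A^2(T+S)+\omega_A^2(T-S)\big)$; then apply the power inequality \eqref{apower} with $n=2$ to each of the two operators $T+S$ and $T-S$ \emph{separately}, giving $\omega_A^2(T\pm S)\ge \omega_A\big((T\pm S)^2\big)$; finally use subadditivity of $\omega_A$ together with the operator identity $(T+S)^2+(T-S)^2=2(T^2+S^2)$ to conclude $\omega_A\big((T+S)^2\big)+\omega_A\big((T-S)^2\big)\ge 2\,\omega_A(T^2+S^2)$. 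Chaining these three steps yields $2\,\omega_{A,\text{e}}^2(T,S)\ge \omega_A(T^2+S^2)$, which is the first inequality in \eqref{th.new2}. The essential point is that \eqref{apower} is an operator-level fact about suprema (in the rotation example above, $\omega(T)^2=\omega(T^2)=1$ even though the pointwise comparison fails at $x=e_1$), and it is exactly the tool that converts squares of numerical radii into numerical radii of squares --- the step your pointwise scheme cannot deliver.
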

\begin{proof}
Notice first that the second inequality in \eqref{th.new2} follows from \eqref{1.1}. By using \eqref{ff2020}, we observe that
\begin{align*}
2\omega_{A,\text{e}}^2(T,S)
&\geq \frac{1}{2}\Big(\omega_A^2(T+S)+\omega_A^2(T-S) \Big)\\
 &\geq \frac{1}{2}\Big(\omega_A[(T+S)^2]+\omega_A[(T-S)^2] \Big)\quad (\text{by }\,\eqref{apower})\\
  &\geq \frac{1}{2}\Big(\omega_A[(T+S)^2+(T-S)^2] \Big)\\
    &= \omega_A(T^2+S^2).
\end{align*}
This proves the first inequality in \eqref{th.new2}.
\end{proof}
The following corollary is also an immediate consequence of Theorem \ref{mai1000} and generalizes the well-known inequalities proved by F. Kittaneh in \cite[Theorem 1]{FK}. Moreover, the obtained inequalities improve the bounds in \eqref{16}.
\begin{corollary}
Let $T\in \mathcal{B}_{A}(\mathcal{H})$. Then,
  \begin{equation}\label{fffeki1}
  \frac{1}{2}\sqrt{\|T^{\sharp} T+TT^{\sharp}\|_A}\le  \omega_A(T) \le \frac{\sqrt{2}}{2}\sqrt{\|T^{\sharp} T+TT^{\sharp}\|_A}.
  \end{equation}
The inequalities in \eqref{fffeki1} are sharp.
\end{corollary}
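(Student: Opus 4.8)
The plan is to obtain both inequalities simultaneously from Theorem~\ref{mai1000} by feeding it the $A$-real and $A$-imaginary parts of $T$, exactly along the lines of the proof of Corollary~\ref{twil}. The first step is to record the reduction $\omega_A(T)=\omega_{A,\text{e}}\big(\Re_A(T),\Im_A(T)\big)$. Writing $T=\Re_A(T)+i\Im_A(T)$ and using that $\Re_A(T)$ and $\Im_A(T)$ are $A$-selfadjoint, so that $\langle \Re_A(T)x\mid x\rangle_A$ and $\langle \Im_A(T)x\mid x\rangle_A$ are real for every $x$, I get $|\langle Tx\mid x\rangle_A|^2=|\langle \Re_A(T)x\mid x\rangle_A|^2+|\langle \Im_A(T)x\mid x\rangle_A|^2$ for each $x\in\mathbb{S}^A(0,1)$; taking the supremum yields the identity. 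Since $\Re_A(T),\Im_A(T)\in\mathcal{B}_A(\mathcal{H})$, Theorem~\ref{mai1000} applies to the pair $(\Re_A(T),\Im_A(T))$ and gives
$$\tfrac{\sqrt2}{2}\sqrt{\omega_A\big(\Re_A(T)^2+\Im_A(T)^2\big)}\le \omega_A(T)\le \sqrt{\big\|\Re_A(T)^{\sharp}\Re_A(T)+\Im_A(T)^{\sharp}\Im_A(T)\big\|_A}.$$

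The second step is to identify both outer quantities with $\tfrac12\|T^{\sharp}T+TT^{\sharp}\|_A$. A direct expansion gives the operator identity $\Re_A(T)^2+\Im_A(T)^2=\tfrac12\big(T^{\sharp}T+TT^{\sharp}\big)$, which is $A$-positive. For the upper bound I must replace $\Re_A(T)^{\sharp}\Re_A(T)+\Im_A(T)^{\sharp}\Im_A(T)$ by $\Re_A(T)^2+\Im_A(T)^2$, and here a short lemma is needed: for any $A$-selfadjoint $R$ one has $AR^{\sharp}=AR$ (indeed $AR^{\sharp}=AA^{\dag}R^*A=P_{\overline{\mathcal R(A)}}R^*A=P_{\overline{\mathcal R(A)}}AR=AR$, using $R^*A=AR$ and $\mathcal R(AR)\subseteq\overline{\mathcal R(A)}$), whence $AR^{\sharp}R=AR^2$ and so $R^{\sharp}R$ and $R^2$ induce the same $A$-sesquilinear form. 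Applying this to $R=\Re_A(T)$ and $R=\Im_A(T)$, and noting that both $\Re_A(T)^{\sharp}\Re_A(T)+\Im_A(T)^{\sharp}\Im_A(T)$ and $\Re_A(T)^2+\Im_A(T)^2$ are $A$-positive, formula \eqref{aself1} lets me replace their $A$-seminorms by the common $A$-numerical radius, which equals $\omega_A\big(\tfrac12(T^{\sharp}T+TT^{\sharp})\big)=\tfrac12\|T^{\sharp}T+TT^{\sharp}\|_A$. Substituting back turns the two displayed estimates precisely into $\tfrac12\sqrt{\|T^{\sharp}T+TT^{\sharp}\|_A}\le\omega_A(T)\le\tfrac{\sqrt2}{2}\sqrt{\|T^{\sharp}T+TT^{\sharp}\|_A}$, which is \eqref{fffeki1}.

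For sharpness I would exhibit equality cases separately. For the upper estimate, take $T$ to be any $A$-normal operator with $AT\neq0$: then $T^{\sharp}T=TT^{\sharp}$ yields $\|T^{\sharp}T+TT^{\sharp}\|_A=2\|T\|_A^2$, while \eqref{10k} gives $\omega_A(T)=\|T\|_A$, so both sides of the right-hand inequality equal $\|T\|_A$. For the lower estimate, the cleanest route is to specialize to $A=I$, where the statement collapses to Kittaneh's classical bound $\tfrac12\sqrt{\|T^*T+TT^*\|}\le\omega(T)$ from \cite{FK}; the nilpotent Jordan block on $\mathbb C^2$ attains equality there, and since $A=I$ is an admissible choice this shows the constant $\tfrac12$ cannot be enlarged.

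I expect the derivation itself to be essentially routine once the reduction $\omega_A(T)=\omega_{A,\text{e}}(\Re_A(T),\Im_A(T))$ is in hand. The one genuinely delicate point is that the upper bound of Theorem~\ref{mai1000} produces the $A$-seminorm of operators of type $R^{\sharp}R$, whereas its lower bound produces the $A$-numerical radius of operators of type $R^2$, and one must verify that for $A$-selfadjoint $R$ these coincide on the single $A$-positive operator $\tfrac12(T^{\sharp}T+TT^{\sharp})$; this is exactly where the identity $AR^{\sharp}=AR$ together with \eqref{aself1} are indispensable. The remaining subtlety is that the lower bound fails to be sharp on $A$-normal operators, so its sharpness requires a genuinely non-$A$-normal witness, which is why I reduce to the classical $A=I$ nilpotent example.
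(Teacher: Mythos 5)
Your proof is correct, and its skeleton is the same as the paper's: both reduce \eqref{fffeki1} to Theorem \ref{mai1000} applied to the Cartesian decomposition of $T$, and both then use \eqref{aself1} to merge the $\omega_A$-lower bound and the $\|\cdot\|_A$-upper bound into a single quantity equal to $\tfrac12\|T^\sharp T+TT^\sharp\|_A$. The genuine difference is in how the mismatch between $R^{\sharp}R$ (produced by the upper bound of Theorem \ref{mai1000}) and $R^{2}$ (produced by the lower bound) is resolved. The paper sidesteps it by feeding Theorem \ref{mai1000} the \emph{sharpened} parts $[\Re_A(T)]^{\sharp},[\Im_A(T)]^{\sharp}$, using $\omega_A(T)=\omega_A(T^{\sharp})$, Lemma \ref{adma} (so that $(X^{\sharp})^{\sharp}X^{\sharp}=(X^{\sharp})^{2}$), the identity $([\Re_A(T)]^{\sharp})^{2}+([\Im_A(T)]^{\sharp})^{2}=\big(\tfrac12(T^\sharp T+TT^\sharp)\big)^{\sharp}$, and $\|X^{\sharp}\|_A=\|X\|_A$. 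You instead work with the unsharpened parts $\Re_A(T),\Im_A(T)$, for which the lower bound gives exactly $\tfrac12(T^\sharp T+TT^\sharp)$ with no sharp floating around, and you handle the upper bound with your own small lemma: $AR^{\sharp}=AR$ for $A$-selfadjoint $R$ (correct, and even quicker via $AR^{\sharp}=AA^{\dag}R^{*}A=AA^{\dag}AR=AR$), so that $R^{\sharp}R$ and $R^{2}$ induce the same $A$-form and hence, being $A$-selfadjoint, have the same $A$-seminorm by \eqref{aself1}. This buys a cleaner identification and avoids the double-sharp bookkeeping, at the cost of one auxiliary lemma not stated in the paper. A second difference is that you actually prove the sharpness claim (upper bound attained by $A$-normal operators via \eqref{10k}; lower bound by specializing to $A=I$ and the nilpotent Jordan block in Kittaneh's classical inequality), whereas the paper's proof asserts sharpness but only derives the inequalities; your argument fills that gap and is legitimate, since exhibiting one admissible $A$ and $T$ attaining equality is enough to show the constants cannot be improved.
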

\begin{proof}
By proceeding as in the proof of Corollary \ref{twil} we get
\begin{align*}
\frac{\sqrt{2}}{2}\sqrt{\omega_A\Big(\left([\Re_A(T)]^{\sharp}\right)^2+\left([\Im_A(T)]^{\sharp}\right)^2\Big)}\leq\omega_A(T) \leq \sqrt{\left\|\left([\Re_A(T)]^{\sharp}\right)^2+\left([\Im_A(T)]^{\sharp}\right)^2\right\|_A}.
\end{align*}
Since $\left([\Re_A(T)]^{\sharp}\right)^2+\left([\Im_A(T)]^{\sharp}\right)^2\geq_A0$, then \eqref{aself1} gives
\begin{align*}
\frac{\sqrt{2}}{2}\sqrt{\left\|\left([\Re_A(T)]^{\sharp}\right)^2+\left([\Im_A(T)]^{\sharp}\right)^2\right\|_A}\leq\omega_A(T) \leq \sqrt{\left\|\left([\Re_A(T)]^{\sharp}\right)^2+\left([\Im_A(T)]^{\sharp}\right)^2\right\|_A}.
\end{align*}
This proves the desired inequalities by following the proof of Corollary \ref{twil}.
\end{proof}
%%%%%%%%%%%%%%%%%%%%%%%
In the rest of this paper, we prove several inequalities involving the $A$-Davis-Wielandt radius and the $A$-numerical radii of operators in $\mathcal{B}_A(\mathcal{H})$.

The following lemma is useful in the proof of our next result.
\begin{lemma}\label{le.2.3}
Let $S\in \mathcal{B}_A(\mathcal{H})$. Then, for every $a\in \mathbb{S}^A(0,1)$ we have
\begin{align*}
|\langle Sa\mid a\rangle_A|^2 \leq \tfrac{1}{2} |\langle S^2a\mid a\rangle_A|
+ \tfrac{1}{4} \langle (S^\sharp S + SS^\sharp)a\mid a\rangle_A.
\end{align*}
\end{lemma}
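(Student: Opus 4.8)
The plan is to derive this pointwise estimate from Buzano's inequality, transplanted into the semi-Hilbert setting. In an ordinary inner product space Buzano's inequality reads $|\langle u\mid e\rangle\langle e\mid v\rangle|\le\tfrac12\big(\|u\|\,\|v\|+|\langle u\mid v\rangle|\big)$ whenever $\|e\|=1$. I would first transfer it to $\langle\cdot\mid\cdot\rangle_A$ exactly as in the proofs of Lemmas \ref{le.2.7} and \ref{le.2.13}: writing $\langle x\mid y\rangle_A=\langle A^{1/2}x\mid A^{1/2}y\rangle$ and applying the classical inequality to $A^{1/2}u,\,A^{1/2}v,\,A^{1/2}a$ (where $\|A^{1/2}a\|=\|a\|_A=1$) gives, for all $u,v\in\mathcal{H}$ and $a\in\mathbb{S}^A(0,1)$,
\[
|\langle u\mid a\rangle_A\,\langle a\mid v\rangle_A|\le\tfrac12\big(\|u\|_A\,\|v\|_A+|\langle u\mid v\rangle_A|\big).
\]

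The decisive choice is $u=Sa$ and $v=S^\sharp a$. To handle the left-hand side I would invoke the defining adjoint relation $\langle Sx\mid y\rangle_A=\langle x\mid S^\sharp y\rangle_A$ with $x=y=a$, which gives $\langle a\mid S^\sharp a\rangle_A=\langle Sa\mid a\rangle_A$; hence the left-hand side collapses to $|\langle Sa\mid a\rangle_A\cdot\langle Sa\mid a\rangle_A|=|\langle Sa\mid a\rangle_A|^2$, precisely the quantity to be bounded.

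Next I would rewrite the two terms on the right. Using the adjoint relation once more, $\|Sa\|_A^2=\langle S^\sharp Sa\mid a\rangle_A$ and $\|S^\sharp a\|_A^2=\langle SS^\sharp a\mid a\rangle_A$ (the latter from $\langle Sx\mid y\rangle_A=\langle x\mid S^\sharp y\rangle_A$ with $x=S^\sharp a$, $y=a$), so the arithmetic–geometric mean inequality yields
\[
\|Sa\|_A\,\|S^\sharp a\|_A\le\tfrac12\big(\|Sa\|_A^2+\|S^\sharp a\|_A^2\big)=\tfrac12\langle(S^\sharp S+SS^\sharp)a\mid a\rangle_A.
\]
For the remaining term, applying the adjoint relation twice and using $(S^\sharp)^2=(S^2)^\sharp$ gives $\langle Sa\mid S^\sharp a\rangle_A=\langle a\mid(S^\sharp)^2a\rangle_A=\langle S^2a\mid a\rangle_A$, whence $|\langle Sa\mid S^\sharp a\rangle_A|=|\langle S^2a\mid a\rangle_A|$. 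Substituting both estimates into the transferred Buzano inequality produces exactly $\tfrac12|\langle S^2a\mid a\rangle_A|+\tfrac14\langle(S^\sharp S+SS^\sharp)a\mid a\rangle_A$.

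The bulk of the work is routine bookkeeping with the adjoint relation and the positivity of $S^\sharp S$ and $SS^\sharp$; the only genuinely delicate point is the semi-Hilbert form of Buzano's inequality, since $\langle\cdot\mid\cdot\rangle_A$ is merely a semi-inner product and $a$ is an $A$-unit vector rather than a true unit vector. The $A^{1/2}$-transfer dissolves this difficulty cleanly, because $A^{1/2}a$ is a genuine unit vector in $\big(\mathcal{H},\langle\cdot\mid\cdot\rangle\big)$, so no degeneracy of the seminorm interferes with the argument.
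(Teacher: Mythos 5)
Your proof is correct and follows essentially the same route as the paper: the paper likewise first transfers Buzano's inequality to the semi-inner product via $A^{1/2}$ (its inequality \eqref{le.2.2}), then applies it with $x=Sa$, $z=a$, $y=S^\sharp a$ and finishes with the arithmetic--geometric mean inequality and the adjoint relation, exactly as you do. The only cosmetic difference is that you rewrite $\langle Sa\mid S^\sharp a\rangle_A$ via $(S^\sharp)^2=(S^2)^\sharp$, whereas the paper uses the adjoint relation directly on $\langle S^2a\mid a\rangle_A=\langle Sa\mid S^\sharp a\rangle_A$.
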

\begin{proof}
Let $x, y, z\in \mathcal{H}$ with $\|z\|_A = 1$. We first prove that
\begin{align}\label{le.2.2}
|\langle x\mid z\rangle_A\langle z\mid y\rangle_A| \leq \tfrac{1}{2}\Big(|\langle x\mid y\rangle| + \|x\|_A\,\|y\|_A\Big).
\end{align}
Since $\|A^{1/2}z\| = 1$, then by using the well-known Buzano's inequality (\cite{buzano}), we see that
\begin{align*}
|\langle x\mid z\rangle_A\langle z\mid y\rangle_A|
& = |\langle A^{1/2}x\mid A^{1/2}z\rangle\langle A^{1/2}z\mid A^{1/2}y\rangle|\\
 &\leq \tfrac{1}{2}\Big(|\langle  A^{1/2}x\mid  A^{1/2}y\rangle| + \| A^{1/2}x\|\,\| A^{1/2}y\|\Big).
\end{align*}
This proves the desired result.

Now, let $a\in \mathbb{S}^A(0,1)$. By using the by the arithmetic-geometric mean inequality and applying \eqref{le.2.2} for $x= Sa$, $z= a$ and $y = S^\sharp a$ we infer that
\begin{align*}
|\langle Sa\mid a\rangle_A|^2
 &= |\langle Sa\mid  a\rangle_A\langle a\mid  S^\sharp a\rangle_A|\\
 & \leq \tfrac{1}{2}\Big(|\langle Sa\mid  S^\sharp a\rangle_A| + \|Sa\|_A\,\|S^\sharp a\|_A\Big)\\
 & \leq \tfrac{1}{2}|\langle Sa\mid  S^\sharp a\rangle_A| + \tfrac{1}{4}\Big(\|Sa\|^2 + \|S^\sharp a\|^2\Big)\\
 & = \tfrac{1}{2}|\langle S^2a\mid  a\rangle_A| + \tfrac{1}{4} \langle (S^\sharp S + SS^\sharp)a\mid  a\rangle_A.
\end{align*}
Hence, the proof is complete.
\end{proof}
%%%%%%%%%%%%%%%%%%%%%%%%%%%%%
We present now the following result.
\begin{theorem}\label{th.2.4}
Let $T\in \mathcal{B}_A(\mathcal{H})$. Then, we have
\begin{align*}
{d\omega_A}(T) \leq \frac{1}{2}\sqrt{\omega_A\left(\left(T^\sharp T + T\right)^2\right)
+\omega_A\left(\left(T^\sharp T - T\right)^2\right)
+\omega_A\Big(T^\sharp T  + 2(T^\sharp T)^2 + TT^\sharp\Big)}.
\end{align*}
\end{theorem}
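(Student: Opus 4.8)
The plan is to start from the identity ${d\omega_A}(T)=\omega_{A,\text{e}}(T,T^\sharp T)$ recorded in \eqref{davis}, and then combine the parallelogram decomposition used in the proof of \Cref{th.new} with the pointwise Buzano-type estimate of \Cref{le.2.3}. First I would fix $x\in\mathbb{S}^A(0,1)$ and note that, since $\langle T^\sharp Tx\mid x\rangle_A=\|Tx\|_A^2\ge 0$, one has $\|Tx\|_A^4=|\langle T^\sharp Tx\mid x\rangle_A|^2$; hence the quantity defining ${d\omega_A}(T)$ is exactly $|\langle Tx\mid x\rangle_A|^2+|\langle T^\sharp Tx\mid x\rangle_A|^2$. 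Writing $P:=T^\sharp T$, which satisfies $P^\sharp=P$ by \Cref{l01} (it is $A$-selfadjoint with $\mathcal{R}(P)\subseteq\overline{\mathcal{R}(A)}$), the parallelogram identity for complex numbers gives
\[
|\langle Tx\mid x\rangle_A|^2+\|Tx\|_A^4=\tfrac{1}{2}\Big(|\langle (T+P)x\mid x\rangle_A|^2+|\langle (T-P)x\mid x\rangle_A|^2\Big).
\]

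Next I would apply \Cref{le.2.3} to the two $A$-bounded operators $S_1:=T+P$ and $S_2:=T-P$ (both lie in $\mathcal{B}_A(\mathcal{H})$ since $T,P\in\mathcal{B}_A(\mathcal{H})$), obtaining for each $j\in\{1,2\}$ the pointwise bound $|\langle S_jx\mid x\rangle_A|^2\le \tfrac{1}{2}|\langle S_j^2x\mid x\rangle_A|+\tfrac{1}{4}\langle (S_j^\sharp S_j+S_jS_j^\sharp)x\mid x\rangle_A$. Substituting these into the displayed identity and grouping the four "$S_j^\sharp S_j+S_jS_j^\sharp$" contributions yields
\[
|\langle Tx\mid x\rangle_A|^2+\|Tx\|_A^4\le \tfrac14|\langle S_1^2x\mid x\rangle_A|+\tfrac14|\langle S_2^2x\mid x\rangle_A|+\tfrac18\big\langle \big(S_1^\sharp S_1+S_1S_1^\sharp+S_2^\sharp S_2+S_2S_2^\sharp\big)x\mid x\big\rangle_A.
\]

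The key algebraic step is to simplify the combined positive operator. Using $P^\sharp=P$ one gets $S_1^\sharp=T^\sharp+P$ and $S_2^\sharp=T^\sharp-P$, and a direct expansion in which the cross terms between $T$ and $P$ cancel in pairs yields
\[
S_1^\sharp S_1+S_1S_1^\sharp+S_2^\sharp S_2+S_2S_2^\sharp=2\big(T^\sharp T+TT^\sharp+2P^2\big)=2\big(T^\sharp T+2(T^\sharp T)^2+TT^\sharp\big).
\]
Inserting this turns the $\tfrac18$-coefficient into $\tfrac14$. Taking the supremum over $x\in\mathbb{S}^A(0,1)$ (so that the supremum of the sum is dominated by the sum of suprema), and using that $T^\sharp T+2(T^\sharp T)^2+TT^\sharp$ is $A$-positive so that by \eqref{aself1} the supremum of $\langle\,\cdot\, x\mid x\rangle_A$ over it equals its $A$-numerical radius, I obtain
\[
{d\omega_A}(T)^2\le \tfrac14\Big(\omega_A\big((T+P)^2\big)+\omega_A\big((T-P)^2\big)+\omega_A\big(T^\sharp T+2(T^\sharp T)^2+TT^\sharp\big)\Big).
\]
Since $(T+P)^2=(T^\sharp T+T)^2$ and $(T-P)^2=(T^\sharp T-T)^2$, taking square roots gives precisely the claimed bound.

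I expect the only genuinely delicate point to be the algebraic cancellation in evaluating $S_1^\sharp S_1+S_1S_1^\sharp+S_2^\sharp S_2+S_2S_2^\sharp$, and in particular getting the constant factor $2$ that collapses $\tfrac18$ into the $\tfrac14$ needed to match the three $\omega_A$-terms; everything else is a routine assembly of the parallelogram identity, \Cref{le.2.3}, and the positivity/seminorm facts already available, together with the minor bookkeeping of justifying $P^\sharp=P$ via \Cref{l01} and separating the three suprema.
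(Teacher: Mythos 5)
Your proof is correct and takes essentially the same route as the paper's: the identical parallelogram identity rewriting $\|Tx\|_A^2 \pm \langle Tx\mid x\rangle_A$ as $\langle (T^\sharp T\pm T)x\mid x\rangle_A$, the same application of Lemma~\ref{le.2.3} to the two operators $T^\sharp T\pm T$, and the same collapse of the four positive-part terms to $2\big(T^\sharp T+2(T^\sharp T)^2+TT^\sharp\big)$ before taking suprema. The only difference is expository: you carry out explicitly the cancellation that the paper compresses into ``short calculations,'' and you justify $(T^\sharp T)^\sharp=T^\sharp T$ via Lemma~\ref{l01} exactly as the paper does elsewhere.
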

\begin{proof}
Let $x\in \mathbb{S}^A(0,1)$. By applying the well-known parallelogram identity for complex numbers, we see that
\begin{align}\label{jjjjjjj}
|\langle Tx\mid x\rangle_A|^2 + \|Tx\|_A^4
& = \frac{1}{2}\left(\big|\,\|Tx\|_A^2 + \langle Tx\mid x\rangle_A \big|^2 + \big|\,\|Tx\|_A^2 - \langle Tx\mid x\rangle_A \big|^2\right)\nonumber\\
& = \frac{1}{2}\left(\big|\big\langle (T^\sharp T + T)x\mid x\big\rangle_A \big|^2 + \big|\big\langle (T^\sharp T - T)x\mid x\big\rangle_A \big|^2\right).
\end{align}
On the other hand, by applying Lemma \ref{le.2.3} we see that
\begin{align*}
&\big|\big\langle (T^\sharp T + T)x\mid x\big\rangle_A \big|^2 + \big|\big\langle (T^\sharp T - T)x\mid x\big\rangle_A \big|^2\\
& \leq \frac{1}{2}\big|\big\langle (T^\sharp T + T)^2x\mid x\big\rangle_A \big|+\frac{1}{2}\big|\big\langle(T^\sharp T - T)^2x\mid x\big\rangle_A \big|\\
&\quad  + \frac{1}{4}\big\langle \left[(T^\sharp T + T)^\sharp(T^\sharp T + T) +(T^\sharp T + T^\sharp)^\sharp(T^\sharp T + T^\sharp)\right]x\mid x\big\rangle_A\\
&\quad \quad + \frac{1}{4}\big\langle \left[(T^\sharp T - T)^\sharp(T^\sharp T - T)+(T^\sharp T - T^\sharp)^\sharp(T^\sharp T - T^\sharp)\right]x\mid x\big\rangle_A.
\end{align*}
By observing that $(T^\sharp T)^\sharp=T^\sharp T$ and making short calculations, we infer that
\begin{align*}
&\big|\big\langle (T^\sharp T + T)x\mid x\big\rangle_A \big|^2 + \big|\big\langle (T^\sharp T - T)x\mid x\big\rangle_A \big|^2\\
&\leq\frac{1}{2}\big|\big\langle(T^\sharp T + T)^2x\mid x\big\rangle_A\big| + \frac{1}{2}\big|\big\langle(T^\sharp T - T)^2x\mid x\big\rangle_A\big|\\
&\quad \quad+ \frac{1}{2}\big\langle\left[T^\sharp T + 2(T^\sharp T)^2 + TT^\sharp\right]x\mid x \big\rangle_A\\
&\leq\frac{1}{2}\left[\omega_A\left(\left(T^\sharp T + T\right)^2\right)
+\omega_A\left(\left(T^\sharp T - T\right)^2\right)
+\omega_A\Big(T^\sharp T  + 2(T^\sharp T)^2 + TT^\sharp\Big)\right].
\end{align*}
Hence, by taking into account \eqref{jjjjjjj} we obtain
\begin{align*}
&|\langle Tx\mid x\rangle_A|^2 + \|Tx\|_A^4\\
&\leq \frac{1}{4}\left[\omega_A\left(\left(T^\sharp T + T\right)^2\right)
+\omega_A\left(\left(T^\sharp T - T\right)^2\right)
+\omega_A\Big(T^\sharp T  + 2(T^\sharp T)^2 + TT^\sharp\Big)\right],
\end{align*}
for all $x\in \mathbb{S}^A(0,1)$. Finally, by taking the supremum over all $x\in \mathbb{S}^A(0,1)$ in the above inequality we get the desired result.
\end{proof}

In order to prove our next upper bound for ${d\omega_A}(\cdot)$, we need the following lemma.
\begin{lemma}\label{kz01}
Let $T\in \mathcal{B}_A(\mathcal{H})$. Then, for all $x\in \mathbb{S}^A(0,1)$ we have
$$|\langle Tx\mid x\rangle_A|^2 \leq \sqrt{\langle T^\sharp Tx\mid x\rangle_A}\sqrt{\langle TT^\sharp x\mid x\rangle_A}.$$
\end{lemma}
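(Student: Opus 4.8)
The plan is to rewrite the right-hand side in terms of $A$-seminorms and then to invoke the Cauchy–Schwarz inequality for the semi-inner product $\langle\cdot\mid\cdot\rangle_A$ a pair of times. The starting point is the two elementary identities
\begin{align*}
\langle T^\sharp T x\mid x\rangle_A = \langle Tx\mid Tx\rangle_A = \|Tx\|_A^2,
\qquad
\langle TT^\sharp x\mid x\rangle_A = \langle T^\sharp x\mid T^\sharp x\rangle_A = \|T^\sharp x\|_A^2,
\end{align*}
valid for every $x\in\mathbb{S}^A(0,1)$; both are immediate from the defining relation $\langle Tu\mid v\rangle_A = \langle u\mid T^\sharp v\rangle_A$ of the $A$-adjoint (the first identity is already used in the proof of Lemma \ref{le.2.88}, and the second follows by taking $u=T^\sharp x$, $v=x$). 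With these in hand, the assertion to be proved is exactly
$$
|\langle Tx\mid x\rangle_A|^2 \leq \|Tx\|_A\,\|T^\sharp x\|_A.
$$

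To establish this, I would bound $|\langle Tx\mid x\rangle_A|$ in two different ways, each by a single application of the Cauchy–Schwarz inequality for $\langle\cdot\mid\cdot\rangle_A$ (which remains valid for the possibly degenerate form). On one hand, since $\|x\|_A=1$,
$$
|\langle Tx\mid x\rangle_A| \leq \|Tx\|_A\,\|x\|_A = \|Tx\|_A.
$$
On the other hand, transferring $T$ to $T^\sharp$ via the $A$-adjoint relation and then applying Cauchy–Schwarz gives
$$
|\langle Tx\mid x\rangle_A| = |\langle x\mid T^\sharp x\rangle_A| \leq \|x\|_A\,\|T^\sharp x\|_A = \|T^\sharp x\|_A.
$$

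Multiplying these two estimates yields $|\langle Tx\mid x\rangle_A|^2 \leq \|Tx\|_A\,\|T^\sharp x\|_A$, and substituting back the two seminorm identities produces the stated inequality. There is no genuine obstacle here: the only points requiring care are that Cauchy–Schwarz holds for the semi-inner product even when $\|\cdot\|_A$ is merely a seminorm, and that the factorization of $|\langle Tx\mid x\rangle_A|^2$ must use two \emph{different} representations of the same scalar — one keeping $T$ on the left factor and the other transferring it to $T^\sharp$. A Buzano-type refinement of the sort used in Lemma \ref{le.2.3} is not needed for this particular estimate.
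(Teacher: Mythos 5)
Your proof is correct and follows essentially the same route as the paper: both factor $|\langle Tx\mid x\rangle_A|^2$ into two copies, rewrite one as $|\langle x\mid T^\sharp x\rangle_A|$ via the $A$-adjoint relation, and apply Cauchy--Schwarz to each factor to obtain $\|Tx\|_A\,\|T^\sharp x\|_A$. The only cosmetic difference is that the paper passes through $A^{1/2}$ and uses the classical Cauchy--Schwarz inequality, whereas you invoke it directly for the semi-inner product, which is equally valid.
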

\begin{proof}
Let $x\in \mathbb{S}^A(0,1)$. By using the Cauchy-Schwarz inequality we see that
\begin{align*}
|\langle Tx\mid x\rangle_A|^2
& =|\langle Tx\mid x\rangle_A|\cdot|\langle Tx\mid x\rangle_A| \\
 &=|\langle Tx\mid x\rangle_A|\cdot|\langle x\mid T^{\sharp}x\rangle_A| \\
 &=|\langle A^{1/2}Tx\mid A^{1/2}x\rangle|\cdot|\langle A^{1/2}x\mid A^{1/2}T^{\sharp}x\rangle| \\
 &\leq \|Tx\|_A \|T^{\sharp}x\|_A \\
  &=\sqrt{\langle T^\sharp Tx\mid x\rangle_A}\sqrt{\langle TT^\sharp x\mid x\rangle_A}.
\end{align*}
Hence, the proof is complete.
\end{proof}
%%%%%%%%%%%%%%%%%%%%%%%%%%%%%%%%%%%%%%%
Now, we are in a position to provide the following upper bound for $d\omega_A(\cdot)$.
\begin{theorem}\label{th.2.16}
Let $T\in \mathcal{B}_A(\mathcal{H})$. Then
\begin{align*}
{d\omega_A}(T) \leq \sqrt{\frac{1}{2}\,\omega_A\Big(T^\sharp T + 2 (T^\sharp T)^2 + TT^\sharp \Big)-\frac{1}{2}\,\displaystyle{\inf_{\|x\|_A = 1}}(\|Tx\|_A - \|T^\sharp x\|_A)^2}.
\end{align*}
\end{theorem}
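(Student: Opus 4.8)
The plan is to estimate the quantity $|\langle Tx\mid x\rangle_A|^2 + \|Tx\|_A^4$ pointwise for each $x\in\mathbb{S}^A(0,1)$, keeping careful track of the term that will eventually produce the infimum, and only then pass to the supremum. Since by definition $d\omega_A(T)^2 = \sup_{x\in\mathbb{S}^A(0,1)}\big(|\langle Tx\mid x\rangle_A|^2 + \|Tx\|_A^4\big)$, this reduces the theorem to a uniform pointwise bound.

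First I would apply Lemma \ref{kz01} to write $|\langle Tx\mid x\rangle_A|^2 \leq \|Tx\|_A\,\|T^\sharp x\|_A$. The essential refinement is to replace the crude arithmetic--geometric mean inequality by the exact identity $ab = \tfrac12(a^2+b^2) - \tfrac12(a-b)^2$ with $a=\|Tx\|_A$ and $b=\|T^\sharp x\|_A$; this is precisely what introduces the subtracted square. Using $\|Tx\|_A^2 = \langle T^\sharp Tx\mid x\rangle_A$ and $\|T^\sharp x\|_A^2 = \langle TT^\sharp x\mid x\rangle_A$, this gives
\[
|\langle Tx\mid x\rangle_A|^2 \leq \tfrac12\big\langle(T^\sharp T + TT^\sharp)x\mid x\big\rangle_A - \tfrac12\big(\|Tx\|_A - \|T^\sharp x\|_A\big)^2 .
\]
Next I would dispose of the fourth-power term by Cauchy--Schwarz: since $T^\sharp T$ is $A$-selfadjoint with $(T^\sharp T)^\sharp = T^\sharp T$, one has $\|Tx\|_A^4 = \langle T^\sharp Tx\mid x\rangle_A^2 \leq \|T^\sharp Tx\|_A^2 = \langle(T^\sharp T)^2 x\mid x\rangle_A$ (using $\|x\|_A=1$). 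Adding this to the previous display and collecting the $A$-selfadjoint operators yields, for every $x\in\mathbb{S}^A(0,1)$,
\[
|\langle Tx\mid x\rangle_A|^2 + \|Tx\|_A^4 \leq \tfrac12\big\langle\big(T^\sharp T + 2(T^\sharp T)^2 + TT^\sharp\big)x\mid x\big\rangle_A - \tfrac12\big(\|Tx\|_A - \|T^\sharp x\|_A\big)^2 .
\]

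The final and most delicate step is the passage to the supremum, because the right-hand side is a difference and the supremum does not distribute over it. Here I would use that the last term admits the uniform lower bound $\big(\|Tx\|_A - \|T^\sharp x\|_A\big)^2 \geq \inf_{\|y\|_A=1}\big(\|Ty\|_A - \|T^\sharp y\|_A\big)^2$, so that its negative is an upper bound independent of $x$. Writing $M := T^\sharp T + 2(T^\sharp T)^2 + TT^\sharp$, which is $A$-positive as a sum of the $A$-positive operators $T^\sharp T$, $(T^\sharp T)^2$ and $TT^\sharp$, we have $\sup_{\|x\|_A=1}\langle Mx\mid x\rangle_A = \omega_A(M)$ by \eqref{aself1}. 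Taking the supremum over $x$ therefore gives $d\omega_A(T)^2 \leq \tfrac12\omega_A(M) - \tfrac12\inf_{\|x\|_A=1}(\|Tx\|_A - \|T^\sharp x\|_A)^2$, and extracting square roots yields the claim. The main obstacle is exactly this last manipulation: one must retain the subtracted square throughout the pointwise estimate and exploit that it is bounded below uniformly in $x$, rather than discarding it as in the route leading to Theorem \ref{th.2.4}, which is the entire source of the improvement.
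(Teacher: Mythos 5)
Your proposal is correct and follows essentially the same route as the paper: Lemma \ref{kz01} (i.e.\ Cauchy--Schwarz) for both $|\langle Tx\mid x\rangle_A|^2$ and $\|Tx\|_A^4=|\langle T^\sharp Tx\mid x\rangle_A|^2$, the exact identity $ab=\tfrac12(a^2+b^2)-\tfrac12(a-b)^2$ with $a=\|Tx\|_A$, $b=\|T^\sharp x\|_A$ to produce the subtracted square, and then the uniform lower bound by the infimum before passing to the supremum. The only cosmetic difference is that the paper invokes Lemma \ref{kz01} a second time for the term $|\langle T^\sharp Tx\mid x\rangle_A|^2$ where you apply Cauchy--Schwarz directly, which is the same estimate.
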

\begin{proof}
Notice first that $(T^\sharp T)^\sharp=T^\sharp T$. Now, let $x\in \mathbb{S}^A(0,1)$. By using Lemma \ref{kz01} and the Cauchy-Schwarz inequality we obtain
\begin{align*}
&|\langle Tx\mid x\rangle_A|^2 + \|Tx\|_A^4 \\
&= |\langle Tx\mid x\rangle_A|^2 + |\langle T^\sharp Tx\mid x\rangle_A|^2\\
& \leq \sqrt{\langle T^\sharp Tx\mid x\rangle_A}\sqrt{\langle TT^\sharp x\mid x\rangle_A}+ \sqrt{\langle (T^\sharp T)^\sharp (T^\sharp T)x\mid x\rangle_A}\sqrt{\langle (T^\sharp T)(T^\sharp T)^\sharp x\mid x\rangle}\\
& = \sqrt{\langle T^\sharp Tx\mid x\rangle_A}\sqrt{\langle TT^\sharp x\mid x\rangle_A}+ \sqrt{\left\langle (T^\sharp T)^2x\mid x\right\rangle_A}\sqrt{\left\langle (T^\sharp T)^2 x\mid x\right\rangle_A}\\
& = \frac{1}{2}\left[ \langle T^\sharp Tx\mid x\rangle_A+\langle TT^\sharp x\mid x\rangle_A -\left(\sqrt{\langle T^\sharp Tx\mid x\rangle_A}-\sqrt{\langle TT^\sharp x\mid x\rangle_A}\right)^2 \right]\\
&\quad\quad+ \left\langle (T^\sharp T)^2x\mid x\right\rangle_A\\
&=\frac{1}{2}\left[\langle T^\sharp Tx\mid x\rangle_A+\langle TT^\sharp x\mid x\rangle_A+ 2\left\langle (T^\sharp T)^2x\mid x\right\rangle_A\right]\\
&\quad\quad-\tfrac{1}{2}\left(\sqrt{\langle T^\sharp Tx\mid x\rangle_A}-\sqrt{\langle TT^\sharp x\mid x\rangle_A}\right)^2\\
&=\frac{1}{2}\left\langle\left[T^\sharp T + 2(T^\sharp T)^2 + TT^\sharp\right]x\mid x\right\rangle_A -
\frac{1}{2}\left(\|Tx\|_A - \|T^\sharp x\|_A\right)^2\\
&\leq\tfrac{1}{2}\omega_A\left[T^\sharp T + 2 (T^\sharp T)^2 + TT^\sharp \right]-\tfrac{1}{2}\displaystyle{\inf_{\|x\|_A = 1}}(\|Tx\|_A - \|T^\sharp x\|_A)^2.
\end{align*}
This gives
\begin{align*}
|\langle Tx\mid x\rangle_A|^2 + \|Tx\|_A^4
\leq\tfrac{1}{2}\omega_A\left[T^\sharp T + 2 (T^\sharp T)^2 + TT^\sharp \right]-\tfrac{1}{2}\displaystyle{\inf_{\|x\|_A = 1}}(\|Tx\|_A - \|T^\sharp x\|_A)^2,
\end{align*}
for all $x\in \mathbb{S}^A(0,1)$ which in turn shows required inequality by taking the supremum over all $x\in \mathbb{S}^A(0,1)$.
\end{proof}

The next theorem provides other bound for $d\omega_A(\cdot)$.
\begin{theorem}\label{th.2.1}
Let $T\in \mathcal{B}_A(\mathcal{H})$. Then,
\begin{equation}\label{r1}
{d\omega_A}(T) \leq \sqrt{\omega_A^2\left(T^\sharp T - T\right) + 2\|T\|_A^2\omega_A(T)}.
\end{equation}
\end{theorem}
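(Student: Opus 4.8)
The plan is to work directly from the representation $d\omega_A(T)=\omega_{A,\text{e}}(T,T^\sharp T)$ recorded in \eqref{davis}, which gives
\begin{align*}
[d\omega_A(T)]^2 = \sup_{x\in \mathbb{S}^A(0,1)}\Big(|\langle Tx\mid x\rangle_A|^2 + \|Tx\|_A^4\Big),
\end{align*}
and to estimate the quantity inside the supremum pointwise. First I would fix $x\in \mathbb{S}^A(0,1)$ and abbreviate $p:=\|Tx\|_A^2=\langle T^\sharp Tx\mid x\rangle_A$, which is real and nonnegative, and $q:=\langle Tx\mid x\rangle_A\in\mathbb{C}$. Note that $\|Tx\|_A^4=p^2$. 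The starting observation is the elementary identity, valid precisely because $p$ is real,
\begin{align*}
|q|^2 + p^2 = |p-q|^2 + 2p\,\Re(q),
\end{align*}
obtained by expanding $|p-q|^2 = p^2 - 2p\,\Re(q) + |q|^2$.

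Next I would translate both terms back into operator language. Since $\langle (T^\sharp T - T)x\mid x\rangle_A = p-q$, the first term is exactly $|\langle (T^\sharp T - T)x\mid x\rangle_A|^2$, which is bounded above by $\omega_A^2(T^\sharp T - T)$ directly from the definition of the $A$-numerical radius. For the cross term I would use $p=\|Tx\|_A^2\leq \|T\|_A^2$ (by \eqref{semii}) together with $\Re(q)\leq |q|=|\langle Tx\mid x\rangle_A|\leq \omega_A(T)$. Because $p\geq 0$, this yields $2p\,\Re(q)\leq 2p\,|q|\leq 2\|T\|_A^2\,\omega_A(T)$. Combining the two estimates gives
\begin{align*}
|\langle Tx\mid x\rangle_A|^2 + \|Tx\|_A^4 \leq \omega_A^2(T^\sharp T - T) + 2\|T\|_A^2\,\omega_A(T)
\end{align*}
for every $x\in \mathbb{S}^A(0,1)$, and taking the supremum over all such $x$ and extracting a square root produces \eqref{r1}.

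The only point requiring care is the sign bookkeeping in the cross term $2p\,\Re(q)$: one must check that replacing $\Re(q)$ by $|q|$ and $p$ by $\|T\|_A^2$ is a genuine upper bound irrespective of the sign of $\Re(q)$. This is immediate, since $p\geq 0$ forces $2p\,\Re(q)\leq 2p\,|q|$ when $\Re(q)\geq 0$, while if $\Re(q)<0$ the term is negative and the nonnegative target bound $2\|T\|_A^2\,\omega_A(T)$ still dominates. Apart from this, the argument is a single pointwise estimate followed by a supremum, so I do not expect any serious obstacle; the substantive idea is just the real–quadratic identity that cleanly splits off the $T^\sharp T - T$ contribution from the Davis--Wielandt expression.
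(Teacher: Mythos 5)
Your proposal is correct and coincides with the paper's own proof: both rest on the identity $|q|^2+p^2=|p-q|^2+2p\,\Re(q)$ with $p=\|Tx\|_A^2$ and $q=\langle Tx\mid x\rangle_A$, followed by the bounds $|\langle (T^\sharp T-T)x\mid x\rangle_A|\leq\omega_A(T^\sharp T-T)$ and $2p\,\Re(q)\leq 2\|T\|_A^2\,\omega_A(T)$, and a supremum over $\mathbb{S}^A(0,1)$. Your explicit check that the cross-term bound survives when $\Re(q)<0$ is a point the paper leaves implicit, but it is the same argument.
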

\begin{proof}
Let $x\in \mathcal{H}$ be such that $\|x\|_A=1$. Then, by making simple calculations and using the Cauchy-Schwarz inequality, we see that
\begin{align*}
|\langle Tx\mid x\rangle_A|^2 + \|Tx\|_A^4
&= \Big|\langle Tx\mid Tx\rangle_A - \langle Tx\mid x\rangle_A\Big|^2
+ 2\Re e\Big(\langle Tx\mid Tx\rangle_A\langle Tx\mid x\rangle_A\Big)\\
&= \Big|\left\langle\left(T^\sharp T - T\right)x\mid x\right\rangle_A\Big|^2
+ 2\|Tx\|_A^2\Re e\langle Tx\mid x\rangle_A\\
& \leq \omega_A^2(T^\sharp T - T) + 2\|T\|_A^2\omega_A(T).
\end{align*}
So, we get
\begin{align}\label{sup1}
|\langle Tx\mid x\rangle_A|^2 + \|Tx\|_A^4  \leq \omega_A^2(T^\sharp T - T) + 2\|T\|_A^2\omega_A(T),
\end{align}
for all $x\in \mathbb{S}^A(0,1)$. Hence, by taking the supremum over all $x\in \mathbb{S}^A(0,1)$ in \eqref{sup1}, we get \eqref{r1} as required.
\end{proof}

To prove our next result, we need the following lemma which is quoted from the proof of \cite[Theorem 2.13.]{zamani1}.

\begin{lemma}\label{le.2.5}
Let $T\in \mathcal{B}_A(\mathcal{H})$. Then
\begin{align*}
\frac{1}{2}\|Tx\|_A\leq \sqrt{\frac{\omega_A^2(T)}{2} + \frac{\omega_A(T)}{2}\sqrt{\omega_A^2(T) - |\langle Tx\mid x\rangle_A|^2}}
\end{align*}
for any $x\in \mathbb{S}^A(0,1)$.
\end{lemma}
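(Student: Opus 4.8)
The plan is to fix $x\in\mathbb{S}^A(0,1)$, abbreviate $\omega:=\omega_A(T)$ and $a:=|\langle Tx\mid x\rangle_A|$, and first reduce the statement to a purely scalar inequality. Squaring the asserted bound and using $(\omega+\sqrt{\omega^2-a^2})^2=2\omega^2-a^2+2\omega\sqrt{\omega^2-a^2}$ shows it is equivalent to $\|Tx\|_A^2\le 2\omega^2+2\omega\sqrt{\omega^2-a^2}$, that is, to
\[
\|Tx\|_A^2-a^2\le\big(\omega+\sqrt{\omega^2-a^2}\,\big)^2 .
\]
Since $\omega_A(\cdot)$, $\|\cdot\|_A$ and $|\langle Tx\mid x\rangle_A|$ are unchanged when $T$ is multiplied by a unimodular scalar, I may assume $\langle Tx\mid x\rangle_A=a\ge 0$. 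Setting $\beta:=\sqrt{\|Tx\|_A^2-a^2}$, the target reads $\beta\le\omega+\sqrt{\omega^2-a^2}$. If $\beta\le\omega$ this is trivial (the right-hand side is $\ge\omega$), so the whole content lies in the regime $\beta>\omega\ (\ge a)$, where after squaring it suffices to prove $\tfrac{a^2+\beta^2}{2\beta}\le\omega$.

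The engine is the family of real-part operators. For each $\theta\in\mathbb{R}$ put $C_\theta:=\Re_A(e^{i\theta}T)=\tfrac12\big(e^{i\theta}T+e^{-i\theta}T^{\sharp}\big)$; it is $A$-selfadjoint because $\langle C_\theta y\mid y\rangle_A=\Re\big(e^{i\theta}\langle Ty\mid y\rangle_A\big)\in\mathbb{R}$ for all $y$, so by \eqref{aself1}
\[
\|C_\theta\|_A=\omega_A(C_\theta)=\sup_{\|y\|_A=1}\big|\Re\big(e^{i\theta}\langle Ty\mid y\rangle_A\big)\big|\le\omega .
\]
Writing $f:=Tx-ax$ and $h:=T^{\sharp}x-ax$ (both $A$-orthogonal to $x$, with $\|f\|_A=\beta$), a short computation gives $C_\theta x=a\cos\theta\,x+w_\theta$, where $w_\theta=\tfrac12\big(e^{i\theta}f+e^{-i\theta}h\big)$ and $\langle w_\theta\mid x\rangle_A=0$. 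The Pythagorean relation for $\|\cdot\|_A$ together with $\|C_\theta x\|_A\le\|C_\theta\|_A\le\omega$ yields $\|w_\theta\|_A^2\le\omega^2-a^2\cos^2\theta$ for every $\theta$. Applying the Cauchy--Schwarz inequality to $\langle w_\theta\mid f\rangle_A=\tfrac12\big(e^{i\theta}\beta^2+e^{-i\theta}\xi\big)$, with $\xi:=\langle h\mid f\rangle_A$, I would then obtain
\[
\big|e^{i\theta}\beta^2+e^{-i\theta}\xi\big|^2\le 4\beta^2\big(\omega^2-a^2\cos^2\theta\big)\qquad(\theta\in\mathbb{R}).
\]

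To finish I would let $\theta$ vary. Expanding both sides as single-frequency functions of $\theta$ (the left-hand side contributes $2\beta^2\Re\big[(\bar\xi+a^2)e^{2i\theta}\big]$, and $\cos^2\theta$ contributes $-2a^2\beta^2\cos2\theta$), and taking the supremum over $\theta$ using $\max_\theta\Re\big[2\beta^2(\bar\xi+a^2)e^{2i\theta}\big]=2\beta^2|\xi+a^2|$, turns the last display into
\[
2\beta^2|\xi+a^2|\le 4\beta^2\omega^2-2a^2\beta^2-\beta^4-|\xi|^2,
\]
which is exactly
\[
(a^2+\beta^2)^2\le \beta^4+2a^2\beta^2+|\xi|^2+2\beta^2|\xi+a^2| .
\]
It then remains to prove the elementary scalar inequality $a^4\le|\xi|^2+2\beta^2|\xi+a^2|$ for all $\xi\in\mathbb{C}$ when $\beta\ge a$; this I would establish via the reverse triangle inequality $|\xi+a^2|\ge a^2-|\xi|$ and a one-variable quadratic analysis in $r:=|\xi|$, whose roots are $a^2$ and $2\beta^2-a^2\ (\ge a^2)$, so the quadratic is nonnegative on $[0,a^2]$. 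Combining gives $(a^2+\beta^2)^2\le4\beta^2\omega^2$, i.e. $\tfrac{a^2+\beta^2}{2\beta}\le\omega$, which is what was needed. The main obstacle is precisely this optimization over $\theta$: the unknown phase of the cross term $\xi=\langle h\mid f\rangle_A$ is coupled to the factor $\cos^2\theta$, and one must verify that passing to the worst $\theta$ loses nothing in the constant. Everything is in fact sharp, the extremal configuration being $\langle h\mid f\rangle_A=-a^2$, so no cruder estimate (e.g. averaging over $\theta$, or the simple bounds $\|C_\theta x\|_A\le\omega$ alone) can succeed.
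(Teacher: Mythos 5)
Your proof is correct --- the reduction to $\beta\le\omega+\sqrt{\omega^{2}-a^{2}}$, the Pythagoras bound $\|w_\theta\|_A^{2}\le\omega^{2}-a^{2}\cos^{2}\theta$ (which uses \eqref{aself1} through $\|C_\theta\|_A=\omega_A(C_\theta)\le\omega_A(T)$), the supremum over $\theta$, and the quadratic analysis of $a^{4}\le|\xi|^{2}+2\beta^{2}|\xi+a^{2}|$ for $\beta\ge a$ all check out --- but it takes a genuinely different and much longer route than the one the paper relies on. Note first that the paper gives no proof of Lemma \ref{le.2.5}: it quotes it from the proof of \cite[Theorem 2.13]{zamani1}, so the comparison is with that argument, the $A$-analogue of Dragomir's classical one. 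That argument is in fact already contained in your set-up: with $a=\langle Tx\mid x\rangle_A\ge0$, $f=Tx-ax$ and $h=T^{\sharp}x-ax$, one has $f=w_0-iw_{\pi/2}$, where $w_0=\tfrac12(f+h)$ and $w_{\pi/2}=\tfrac{i}{2}(f-h)$ are exactly your vectors $w_\theta$ at $\theta=0$ and $\theta=\pi/2$; your Pythagoras estimate at these two values gives $\|w_0\|_A\le\sqrt{\omega^{2}-a^{2}}$ and $\|w_{\pi/2}\|_A\le\omega$, so the triangle inequality alone yields $\beta=\|f\|_A\le\|w_0\|_A+\|w_{\pi/2}\|_A\le\sqrt{\omega^{2}-a^{2}}+\omega$, which is the target --- no case split on whether $\beta>\omega$, no Cauchy--Schwarz against $f$, no optimization over $\theta$, no scalar quadratic. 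So both proofs run on the same engine ($C_\theta=\Re_A(e^{i\theta}T)$ is $A$-selfadjoint, \eqref{aself1}, Pythagoras); what your detour buys is essentially only the identification of the extremal configuration $\xi=\langle h\mid f\rangle_A=-a^{2}$, at the cost of handling the phase coupling between $\xi$ and $\cos^{2}\theta$, which you do manage correctly. Two small points to repair in the write-up: the clause ``which is exactly'' linking your two displayed inequalities is a misstatement, since the second display is not a reformulation of the first but the \emph{additional} scalar inequality still to be proved (your subsequent text does treat it correctly as such); and in that scalar inequality the regime $|\xi|>a^{2}$ is not reached by the reverse triangle bound $|\xi+a^{2}|\ge a^{2}-|\xi|$ (vacuous there) and should be dispatched by the trivial estimate $|\xi|^{2}\ge a^{4}$.
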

%%%%%%%%%%%%%%%%%%%%%%%%%%%

Now, we are ready to prove another upper bound for the $A$-Davis--Wielandt radius of operators in $\mathcal{B}_A(\mathcal{H})$.
\begin{theorem}\label{th.2.6}
Let $T\in \mathcal{B}_A(\mathcal{H})$. Then
\begin{align*}
{d\omega_A}(T)
& \leq \frac{\sqrt{2}}{2}\sqrt{\omega_A(T^2) + \frac{1}{2} \omega_A\left(T^\sharp T + TT^\sharp\right)+8\mu},
\end{align*}
where $\mu=\omega_A^2(T)\left(2\omega_A^2(T) - c_A^2(T) + 2\omega_A(T)\sqrt{\omega_A^2(T) - c_A^2(T)}\right)$.
\end{theorem}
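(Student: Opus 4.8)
The plan is to bound the radicand $|\langle Tx\mid x\rangle_A|^2 + \|Tx\|_A^4$ for a fixed $x\in\mathbb{S}^A(0,1)$, estimating its two summands separately, and then to take the supremum over $x$ and extract a square root at the very end. Since by definition ${d\omega_A}(T)=\sup_{\|x\|_A=1}\sqrt{|\langle Tx\mid x\rangle_A|^2+\|Tx\|_A^4}$, it suffices to produce one $x$-independent upper bound for the quantity under the square root.

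For the first summand I would apply Lemma \ref{le.2.3} with $S=T$ and $a=x$, which yields
\[
|\langle Tx\mid x\rangle_A|^2 \le \tfrac{1}{2}|\langle T^2x\mid x\rangle_A| + \tfrac{1}{4}\langle(T^\sharp T+TT^\sharp)x\mid x\rangle_A .
\]
Bounding $|\langle T^2x\mid x\rangle_A|\le\omega_A(T^2)$ and, since $T^\sharp T+TT^\sharp\geq_A 0$, $\langle(T^\sharp T+TT^\sharp)x\mid x\rangle_A\le\omega_A(T^\sharp T+TT^\sharp)$, this contributes $\tfrac{1}{2}\omega_A(T^2)+\tfrac{1}{4}\omega_A(T^\sharp T+TT^\sharp)$, which already accounts for the first two terms under the radical.

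The second summand $\|Tx\|_A^4$ is where the Crawford number $c_A(T)$ enters. Starting from Lemma \ref{le.2.5} and squaring both sides gives
\[
\|Tx\|_A^2\le 2\omega_A^2(T)+2\omega_A(T)\sqrt{\omega_A^2(T)-|\langle Tx\mid x\rangle_A|^2}.
\]
The key observation is that $c_A(T)\le|\langle Tx\mid x\rangle_A|$ for every $x\in\mathbb{S}^A(0,1)$, so by monotonicity $\omega_A^2(T)-|\langle Tx\mid x\rangle_A|^2\le\omega_A^2(T)-c_A^2(T)$; substituting this $x$-free estimate and squaring once more yields $\|Tx\|_A^4\le 4\mu$. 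The main (though still routine) obstacle is to recognize that the resulting square equals $4\mu$: writing $P=\omega_A^2(T)$ and $R^2=\omega_A^2(T)-c_A^2(T)$, one uses the identity $2P-c_A^2(T)=P+R^2$ to compute $4\mu=4P\big(P+R^2+2\sqrt{P}\,R\big)=4P(\sqrt P+R)^2=\big(2\omega_A^2(T)+2\omega_A(T)R\big)^2$, which is precisely the square of the right-hand side above.

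Adding the two estimates gives, for every $x\in\mathbb{S}^A(0,1)$,
\[
|\langle Tx\mid x\rangle_A|^2+\|Tx\|_A^4\le\tfrac{1}{2}\omega_A(T^2)+\tfrac{1}{4}\omega_A(T^\sharp T+TT^\sharp)+4\mu=\tfrac12\Big(\omega_A(T^2)+\tfrac12\omega_A(T^\sharp T+TT^\sharp)+8\mu\Big).
\]
Taking the supremum over $x\in\mathbb{S}^A(0,1)$ and then a square root produces the asserted bound, the factor $\tfrac{\sqrt2}{2}$ arising from the $\tfrac12$ in front of the bracket. The only delicate point is the Crawford-number substitution, which is justified by the monotonicity of $t\mapsto\omega_A^2(T)-t$ together with the chain $c_A(T)\le|\langle Tx\mid x\rangle_A|\le\omega_A(T)$.
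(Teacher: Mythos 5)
Your proposal is correct and follows essentially the same route as the paper's own proof: Lemma \ref{le.2.3} handles $|\langle Tx\mid x\rangle_A|^2$, Lemma \ref{le.2.5} combined with $c_A(T)\le|\langle Tx\mid x\rangle_A|$ handles $\|Tx\|_A^4$, and the supremum plus square root finish the argument. The only (cosmetic) difference is that you identify the final Crawford-number step as an exact identity, $4\mu=\big(2\omega_A^2(T)+2\omega_A(T)\sqrt{\omega_A^2(T)-c_A^2(T)}\big)^2$, where the paper records it as an inequality.
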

\begin{proof}
Let $x\in \mathbb{S}^A(0,1)$. It follows, from Lemma \ref{le.2.3}, that
\begin{align}\label{id.2.6.1}
|\langle Tx\mid x\rangle_A|^2
&\leq \tfrac{1}{2}|\langle T^2x\mid x\rangle_A|+ \tfrac{1}{4}\langle\left(T^\sharp T + TT^\sharp\right)x\mid x\rangle_A\nonumber\\
&\leq \tfrac{1}{2}\omega_A(T^2) + \tfrac{1}{4}\omega_A\left(T^\sharp T + TT^\sharp\right).
\end{align}
Moreover, by using Lemma \ref{le.2.5} one has
\begin{align}\label{id.2.6.2}
\|Tx\|_A^4
&\leq  16\Big(\frac{\omega_A^2(T)}{2} + \frac{\omega_A(T)}{2}\sqrt{\omega_A^2(T) - |\langle Tx\mid x\rangle_A|^2}\Big)^2\nonumber\\
& \leq 4\Big(\omega_A^2(T) +\omega_A(T)\sqrt{\omega_A^2(T) - c_A^2(T)}\Big)^2\nonumber\\
& \leq 4\omega_A^2(T)\Big(2\omega_A^2(T) - c_A^2(T) + 2\omega_A(T)\sqrt{\omega_A^2(T) - c_A^2(T)}\Big),
\end{align}
By combining \eqref{id.2.6.1} together with \eqref{id.2.6.2}, we infer that
\begin{align*}
&|\langle Tx\mid x\rangle_A|^2 + \|Tx\|_A^4\\
& \leq 4\omega_A^2(T)\left(2\omega_A^2(T) - c_A^2(T) + 2\omega_A(T)\sqrt{\omega_A^2(T) - c_A^2(T)}\right)\\
&\quad\quad\quad+\frac{1}{2}\omega_A(T^2) + \frac{1}{4} \omega_A\left(T^\sharp T + TT^\sharp\right),
\end{align*}
for all  $x\in \mathbb{S}^A(0,1)$. Therefore, we obtain the desired inequality by taking the supremum in the above inequality over all
$x\in \mathbb{S}^A(0,1)$.
\end{proof}

%%%%%%%%%%%%%%%%%%%%%%%%%%%%%%%%%%%%%%%%%%%%%%%%%%%%%%
%%%%%%%%%%%%%%%%%%%%%%%%%%%%%%%%%%%%%%%%%%%%%%%%%%%%%%%%%%%%%%%%%%%%%%%%%%%%%%%%%%%%%%%%%%%%%%%%%%%%%%%%%%%%%%%%%%%%%

%-----------------------------------------------------------------------------------------
\end{document}

%------------------------------------------------------------
\documentclass[12pt,reqno]{amsart}
%------------------------------------------------------------
\usepackage{etoolbox}
   %---------- To move the authors addresses to the footnote ----------

   \makeatletter

 \patchcmd{\@setaddresses}{\scshape\ignorespaces}{\ignorespaces}{}{} % addresses in lowercase

\appto\maketitle{%
\let\@makefnmark\relax  \let\@thefnmark\relax
\ifx\@empty\addresses\else\@footnotetext{%
  \vskip-\bigskipamount\@setaddresses}
  }
\def\enddoc@text{}
\makeatother

%-----------------------end -------------------
\makeatletter
\patchcmd\maketitle
  {\uppercasenonmath\shorttitle}
  {}
  {}{}
\patchcmd\maketitle
  {\@nx\MakeUppercase{\the\toks@}}
  {\the\toks@}
  {}
  {}{}
%----titre lowercase-----------
\patchcmd\@settitle{\uppercasenonmath\@title}{\Large}{}{}
%\patchcmd\@settitle{\uppercasenonmath\@title}{} {}{}
%-------------------------
\patchcmd\@setauthors
  {\MakeUppercase{\authors}}
  {\authors}
  {}{}
\makeatother
%------------------------------------------------------------
%\usepackage[notref,notcite]{showkeys}
\usepackage{amsmath,amssymb,amsthm}
\usepackage{color}
\usepackage{url}
\usepackage{tikz-cd}
%-----------------				
%------------------------------------------------------------
\usepackage[utf8]{inputenc}
\usepackage[T1]{fontenc}
%------------------------------------------------------------
\textheight 22.5truecm \textwidth 14.5truecm
\setlength{\oddsidemargin}{0.35in}\setlength{\evensidemargin}{0.35in}

\setlength{\topmargin}{-.5cm}
%\usepackage{geometry}
%\geometry{left=2cm,right=2cm,top=2cm,bottom=2cm}
% ------------------------------------------------------------
\newtheorem{theorem}{Theorem}[section]
\newtheorem{definition}{Definition}[section]
\newtheorem{definitions}{Definitions}[section]
\newtheorem{notation}{Notation}[section]
\newtheorem{corollary}{Corollary}[section]
\newtheorem{proposition}{Proposition}[section]
\newtheorem{lemma}{Lemma}[section]
\newtheorem{remark}{Remark}[section]
\newtheorem{example}{Example}[section]
\numberwithin{equation}{section}
%%%%%%%%%%%%% hyperref %%%%%%%%%%%%%%%%%%%%
%\usepackage[colorlinks=true,pagebackref=true,pagebackref=true]{hyperref}
\usepackage[colorlinks=true]{hyperref}
\hypersetup{urlcolor=blue, citecolor=red , linkcolor= blue}
%----------
\usepackage[capitalise,noabbrev,nameinlink]{cleveref}      % \labelcref pour les equations
%----------
%%%%%%%%%%%% \begin{document} %%%%%%%%%%%%%%%%%%%%
\begin{document}
%------------------------
\address{$^{[1]}$ University of Sfax, Sfax, Tunisia.}
\email{\url{kais.feki@hotmail.com}}
%------------------------
\subjclass[2010]{Primary 46C05, 47A12; Secondary 47B65, 47A12.}

\keywords{Semi-inner product, Davis-Wielandt radius, numerical radius, inequality}

\date{\today}
%\date{June 24, 2019}
\author[Kais Feki] {\Large{Kais Feki}$^{1}$}
\title[Inequalities for the $A$-joint numerical radius of two operators and their applications]{Inequalities for the $A$-joint numerical radius of two operators and their applications}

\maketitle
%------------------------
%----------\thispagestyle{empty}